\newcommand{\map}[1]{\xrightarrow{#1}}
\newcommand{\iso}{\cong}
\newcommand{\Hom}{\mathrm{Hom}}
\newcommand{\Aut}{\mathrm{Aut}}
\newcommand{\End}{\mathrm{End}}
\newcommand{\Spec}{\mathrm{Spec}}
\newcommand{\Q}{\mathbb Q}
\newcommand{\Z}{\mathbb Z}
\newcommand{\R}{\mathbb R}
\newcommand{\C}{\mathbb C}
\newcommand{\F}{\mathbb F}
\newcommand{\m}{\mathfrak m}
\newcommand{\A}{\mathbb A}
\newcommand{\co}{\mathcal O}
\newcommand{\LL}{\mathcal L}
\newcommand{\M}{\mathcal M}
\newcommand{\GL}{\mathrm{GL}}
\newcommand{\PGL}{\mathrm{PGL}}
\newcommand{\YZ}{\mathrm{YZ}}
\newcommand{\Nm}{\mathrm{Nm}}
\newcommand{\Tr}{\mathrm{Tr}}
\newcommand{\Div}{\mathrm{Div}}
\newcommand{\Frob}{\mathrm{Frob}}
\newcommand{\Bun}{\mathrm{Bun}}
\newcommand{\Fr}{\mathrm{Fr}}
\newcommand{\reg}{\mathrm{reg}}
\newcommand{\inv}{\mathrm{inv}}
\newcommand{\kk}{{\bm{k}}}
\newcommand{\cusp}{\mathrm{cusp}}
\newcommand{\Pic}{\mathrm{Pic}}
\newcommand{\K}{\mathbb K}
\newcommand{\J}{\mathbb J}
\newcommand{\I}{\mathbb I}
\newcommand{\G}{\mathbb G}
\newcommand{\Sht}{\mathrm{Sht}}
\newcommand{\Ch}{\mathrm{Ch}}
\newcommand{\Hk}{\mathrm{Hk}}
\newcommand{\Eis}{\mathrm{Eis}}
\begin{document}
\author{Ari Shnidman}
\title{Manin-Drinfeld cycles and derivatives of $L$-functions}

\subjclass[2010]{Primary 11F67; Secondary 14G35, 11F70}
\keywords{$L$-functions; shtukas; Gross--Zagier formula; Waldspurger formula}

\address{Einstein Institute of Mathematics\\ Hebrew University of Jerusalem\\ Israel}
\email{ariel.shnidman@mail.huji.ac.il}

\begin{abstract}
We study algebraic cycles in the moduli space of $\PGL_2$-shtukas, arising from the diagonal torus.  Our main result shows that their intersection pairing with the Heegner-Drinfeld cycle is the product of the $r$-th central derivative of an automorphic $L$-function $L(\pi,s)$ and Waldspurger's toric period integral.  When $L(\pi,\frac12) \neq 0$, this gives a new geometric interpretation for the Taylor series expansion.   When $L(\pi,\frac12) = 0$, the pairing vanishes, suggesting higher order analogues of the vanishing of cusps in the modular Jacobian, as well as other new phenomena. 

Our proof sheds new light on the algebraic correspondence introduced by Yun and Zhang, which is the geometric incarnation of ``differentiating the $L$-function".  We realize it as the Lie algebra action of $e+f \in \mathfrak{sl}_2$ on $(\Q_\ell^2)^{\otimes 2d}$.  The comparison of relative trace formulas needed to prove our formula is then a consequence of Schur-Weyl duality.     
\end{abstract}

\maketitle

\theoremstyle{plain}
\newtheorem{theorem}{Theorem}[section]
\newtheorem{bigtheorem}{Theorem}

\newtheorem{proposition}[theorem]{Proposition}
\newtheorem{lemma}[theorem]{Lemma}
\newtheorem{corollary}[theorem]{Corollary}

\theoremstyle{definition}
\newtheorem{definition}[theorem]{Definition}

\theoremstyle{remark}
\newtheorem{remark}[theorem]{Remark}
\newtheorem{question}[theorem]{Question}

\numberwithin{equation}{section}

\setcounter{tocdepth}{1}


\section{Introduction}


Let $K/F$ be a quadratic extension of global function fields, corresponding to a double cover $\nu \colon Y \to X$ of smooth, projective, geometrically connected curves over $\kk = \F_q$.  We consider  cuspidal automorphic representations $\pi$ on $G = \PGL_{2,F}$.  Let $T$ be the torus $K^\times /F^\times$ over $F$.  For simplicity, we assume both $\pi$ and $K/F$ are  everywhere unramified.    
  
\subsection{Summary} 
Let $\Sht_G^{r}$ be the moduli stack over $\kk$ parameterizing $G$-shtukas with $r$ legs.  
Yun and Zhang define Heegner-Drinfeld cycles $[\Sht_T^{r}]_\pi \in \Ch_c^r(\Sht_G^{r})$ generalizing CM divisors on (Drinfeld) modular curves. 
In \cite[Cor.\ 1.4]{YZ2}, they relate the self-intersection of $[\Sht_T^r]_\pi$ to the product of the $r$-th central derivative of the normalized $L$-function $\mathscr{L}(\pi, s)$ and the central value of the twisted $L$-function $\mathscr{L}(\pi \otimes \eta,s)$:
\begin{equation}\label{yzformula2}
\mathscr{L}^{(r)}(\pi,1/2) \mathscr{L}(\pi \otimes \eta,1/2) \doteq \left\langle [\Sht_T^{r}]_\pi,[\Sht_T^{r}]_\pi\right\rangle_{\Sht_G^{r}}. 
\end{equation}
Here, $\eta$ is the quadratic character associated to $K$.  This is Waldspurger's formula \cite{Waldspurger} when $r = 0$, and a Gross-Zagier type formula when $r  = 1$.

%
%

In this paper, we define {\it Manin-Drinfeld cycles} $[\Sht_A^r]_\pi \in \Ch_c^r(\Sht_G^r)$ coming from $A$-shtukas, where $A \hookrightarrow G$ is the diagonal torus. 
They are generalizations of cuspidal divisors on the (Drinfeld) modular curve.  Our main result (Theorem \ref{thm:main intro}) has the following shape:
\begin{equation}\label{newformula}
\mathscr{L}^{(r)}(\pi,1/2) \int_{[T]}\phi(t) dt \doteq \left\langle [\Sht_A^r]_\pi,[\Sht_T^r]_\pi\right\rangle_{\Sht_G^r},
\end{equation}
for an appropriate spherical vector $\phi \in \pi$.  When $r = 0$, this formula amounts to the statement that $\mathscr{L}(\pi,s)$ can be written as a Mellin transform.  When $r = 1$, it is related to the function field version of the Manin-Drinfeld theorem \cite{Drinfeld,Manin}, that the cusps on the modular Jacobian are torsion.   For $r \geq 2$, our formula has no known analogue over number fields.  If $\mathscr{L}(\pi_K,1/2) \neq 0$, it shows that the Manin-Drinfeld cycles are non-vanishing and our formula gives a new expression for the Taylor series expansion of $\mathscr{L}(\pi,s)$.  If $\mathscr{L}(\pi_K,1/2)  = 0$, then we see that the Manin-Drinfeld cycle has trivial intersection with the Heegner-Drinfeld cycle, leading to an `alternative': either $[\Sht_A^r]_\pi$ is torsion, or $[\Sht_A^r]_\pi$ and $[\Sht_T^r]_\pi$ are linearly independent.

\subsection{Precise statement of results}
Let $Y_0 = X \coprod X \to X$ be the split double cover. 
The $F$-algebra of rational functions on $Y_0$ is $K_0=F\oplus F$.

There are natural  closed immersions 
\[
\xymatrix{
{  \widetilde{T} =\underline{\Aut}_{ \nu_{*} \co_{Y}  } ( \nu_{*} \co_{Y} )  }   \ar[d] &  {\hspace{5mm} } {  \widetilde{A} =\underline{\Aut}_{ \nu_{0*} \co_{Y_0}  } ( \nu_{0*} \co_{Y_0} )  }   \ar[d]\\  
{  \widetilde{G_1}= \underline{\Aut}_{\co_X} ( \nu_{*} \co_{Y} )  }& {\hspace{5mm} } {  \widetilde{G}= \underline{\Aut}_{\co_X} ( \nu_{0*} \co_{Y_0} )  } 
}
\] 
of group schemes over $X$.
Let $T\subset G_1$ and $A \subset G$ be the quotients by the central $\G_m$.
Then $G_1$ is Zariski-locally isomorphic to $\PGL_2$ and $G = \PGL_2$ over $X$. The  group scheme $T$ is a non-split torus,  while $A$ is the split diagonal torus in $\PGL_2$.  
 On  $F$-points, we have $T(F) = K^\times/F^\times$ and $A(F) \simeq F^\times$.

Let $\A$ be the adele ring of $F$, and $\mathbb{O}$ the subring of integral elements.
Define $U = G(\mathbb{O})$ and $U_1 = G_1(\mathbb{O})$. 
There is an isomorphism of spaces of cuspidal automorphic forms  \[\mathcal{A}_\cusp(G_1)^{U_1} \cong \mathcal{A}_\cusp(G)^{U}.\]  These  are finite dimensional $\C$-vector spaces, and the space on the right carries a natural action of the  Hecke algebra $\mathscr{H}$ of $\Q$-valued compactly supported $U$-bi-invariant functions on $G(\A)$.

We adopt the usual notation 
\[
[ T ] = T (F)\backslash T(\A)  \hspace{5mm} \mbox{and} \hspace{5mm}
[A]=A(F) \backslash A(\A).
\]
For any $\phi \in \mathcal{A}_\cusp(G)^{U}$, consider the toric period integrals
\[
\mathscr{P}_A(\phi,s) = \int_{[A]} \phi(a)|a|^{2s} da,
\]
and 
\[
\mathscr{P}_T(\phi) = \int_{[T]} \phi(t)dt.
\]
The Haar measures are chosen so that the volume of $A(\mathbb{O})$ and $T(\mathbb{O})$ is 1.

To precisely define the geometric side of (\ref{newformula}), recall from \cite{YZ} the stack $\Sht_{T}^r$ of $T$-shtukas with $r$ modifications, and the $2r$-dimensional $\kk$-stack $\Sht_{G}^r$ of $\PGL_2$-shtukas with $r$ modifications. The former parameterizes shtukas of line bundles on $Y$, while the latter parameterizes shtukas of rank 2 vector bundles on $X$.    
The $\kk$-stack $\Sht_T^r$ is proper of dimension $r$, and $\nu_*$ induces a finite morphism $\theta_T^r \colon \Sht_{T}^r \to \Sht_{G}^r$.  
Pushing forward the fundamental class along $\theta_T^r$ gives a class $[\Sht_{T}^r]\in  \Ch_c^r(\Sht_{G}^r)$ in the Chow group of compactly supported cycles. 

\begin{remark}
The definitions above require a choice of $\mu=(\mu_i)  \in \{\pm 1\}^r$ satisfying $\sum_{i=1}^r \mu_i =0$; in particular we assume that $r$ is even.   We suppress the choice of $\mu$  in the introduction.  
\end{remark}

Analogously, we define in Section \ref{s:intersection theory} a stack $\Sht_A^r$ parameterizing $A$-shtukas with $r$ modifications.  It is not of finite type over $\kk$, but can be written as a union 
\[\Sht_A^r = \bigcup_{d \geq 0} \Sht_A^{r,\leq d}\]
of stacks $\Sht_A^{r,\leq d}$ which are proper over $\kk$,
and which admit finite maps 
\[\Sht_A^{r,\leq d} \to \Sht_G^r.\]  
Define 
$[\Sht_{A}^{r,\leq d}]\in  \Ch_{c,r}(\Sht_{G}^r)$
as the push-forward of the fundamental class.

Fix $d \geq 0$, and denote by $\widetilde W^d_A, \widetilde W_T \subset \Ch_{c,r}(\Sht_{G}^r)$  the $\mathscr{H}$-submodule generated by the classes $[\Sht_{A}^{r,\leq d}]$, $[\Sht_T^r]$ respectively.
Restricting the intersection pairing on the Chow group defines a pairing
$
\langle \cdot , \cdot\rangle : \widetilde{W}^d_A \times \widetilde{W}_T \to \Q.
$
If we define  
\begin{align*}
W_A^d & = \widetilde{W}_A/  \{ c \in \widetilde{W}_A^d : \langle c, \widetilde{W}_T\rangle =0 \}  \\
W_T^d & = \widetilde{W}_T/  \{ c \in \widetilde{W}_T : \langle c, \widetilde{W}^d_A\rangle =0 \},
\end{align*}
this pairing descends to $ W_A^d \times W_T^d$, and we extend it to an $\R$-bilinear pairing
\[
\langle \,, \,  \rangle \colon W_A^d(\R)  \times W_T^d(\R) \to \R.
\]
We show in $\S 4$ that for $d \gg 0$, the space $W_T^d$ is independent of $d$, and for each $* \in \{A, T\}$, there a decomposition into isotypic components
\[
W_*^d (\R)= W^d_{*,\mathrm{Eis}} \oplus \left(\bigoplus _\pi W^d_{*, \pi}\right),
\] 
where the sum is over all unramified cuspidal $\pi$, and $\mathscr{H}$ acts on $W^d_{*, \pi}$ via $\lambda_\pi : \mathscr{H} \to \R$. 

Let
\[
[\Sht_{A}^{r,\leq d}]_\pi \in  W^d_{A, \pi} ,
\hspace{5mm} \mbox{and} \hspace{5mm}
[\Sht_{T}^r]_\pi \in  W^d_{T, \pi},\]
be the projections  of  $[\Sht_{A}^{r,\leq d}]\in W^d_A(\R)$ and $[\Sht_{T}^r] \in  W^d_T(\R)$.  These are the {\it Manin-Drinfeld} and {\it Heegner-Drinfeld} classes, respectively.  

Our main result is the following intersection formula.  Write $\mathscr{P}^{(r)}_A(\phi,s)$ for the $r$-th derivative of $\mathscr{P}_A(\phi,s)$.  We assume $d \gg 0$, so that the Manin-Drinfeld cycles are independent of $d$, and write $[\Sht_{A}^{r}]_\pi$ instead of $[\Sht_{A}^{r,\leq d}]_\pi$.

\begin{bigtheorem}\label{thm:main intro}
Let $\phi \in \pi^U$ be non-zero and let $r \geq 0$ be even.  Then
\[
\dfrac{\mathscr{P}^{(r)}_A(\phi,0) \mathscr{P}_T(\bar\phi)}{(\log q)^r  \langle \phi, \phi \rangle_\mathrm{Pet}} = \left\langle [\Sht_{A}^{r}]_\pi, [\Sht_{T}^r]_\pi\right\rangle_{\Sht_G^r}. 
\]  
\end{bigtheorem}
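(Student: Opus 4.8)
The plan is to prove Theorem~\ref{thm:main intro} by a comparison of two relative trace formulas, following the strategy of \cite{YZ} and \cite{HS}, but exploiting the representation-theoretic description of the Yun--Zhang operator. First I would set up the two sides. On the \emph{spectral side}, the left-hand quantity $\mathscr{P}^{(r)}_A(\phi,0)\,\mathscr{P}_T(\bar\phi)/((\log q)^r\langle\phi,\phi\rangle_{\mathrm{Pet}})$ should be recognized as the $\pi$-isotypic component of a distribution $\mathbb{J}_r(f)$ obtained by differentiating $r$ times (in the $|a|^{2s}$ variable, which accounts for the $(\log q)^r$) the mixed period distribution $f\mapsto \int_{[A]}\int_{[T]}(\text{kernel of }f)$, for $f\in\mathscr{H}$ a suitable test function. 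The unramifiedness hypotheses make all local factors trivial, so the key point is that the $r=0$ version of this period identity is essentially the statement that $\mathscr{L}(\pi,s)$ is a Mellin transform along $[A]$ paired with the toric integral along $[T]$ --- a tautology, as the introduction indicates --- and the general case is its $r$-th derivative.

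Next I would set up the \emph{geometric side}. By the construction recalled in the excerpt, $W_T^d$ and $W_A^d$ carry $\mathscr{H}$-actions, and the intersection number $\langle [\Sht_A^{r,\leq d}]_\pi,[\Sht_T^r]_\pi\rangle$ is computed, via the Lefschetz trace formula on $\Sht_G^r$ and the octahedron/Hitchin-fibration formalism of \cite{YZ}, as a trace of $[\YZ_d]^r\circ\Frob$ (or rather its $A$--$T$ variant) acting on the cohomology of the relevant moduli space $M_d$, whose sheaf-theoretic incarnation is $\beta_*\Q_\ell$. The crucial step --- and here is where the new input enters --- is to identify $\beta_*\Q_\ell$ with the $S_{2d}$-representation $(\Q_\ell^2)^{\otimes 2d}$ and the operator $[\YZ_d]$ with the action of $e+f\in\mathfrak{sl}_2$ on this space. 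Granting this, Schur--Weyl duality decomposes $(\Q_\ell^2)^{\otimes 2d}$ as a bimodule over $S_{2d}\times\mathfrak{sl}_2$, and since $e+f$ is conjugate in $\mathrm{SL}_2$ to the semisimple element $\mathrm{diag}(1,-1)=h$, powers of $e+f$ act on each irreducible $\mathfrak{sl}_2$-summand $V_m$ by explicitly computable scalars summed over weights. This converts $\mathrm{Tr}([\YZ_d]^r\circ\Frob)$ into a weighted sum of Frobenius traces on the $S_{2d}$-isotypic pieces, which is exactly the shape of the $r$-th derivative on the spectral side.

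The remaining steps are bookkeeping: (i) match the $A$-side cycle class $[\Sht_A^{r,\leq d}]$ with the ``boundary'' or diagonal stratum in $M_d$ in the limit $d\gg 0$, checking that the class stabilizes (this is the promised $d$-independence, and follows by showing the relevant cohomology stabilizes); (ii) match the $T$-side cycle class $[\Sht_T^r]_\pi$ with the Heegner-Drinfeld class of \cite{YZ}, for which one can cite \cite{YZ} directly; (iii) track all the normalization constants --- the volume-$1$ normalization of Haar measures on $A(\mathbb{O})$, $T(\mathbb{O})$, the Petersson inner product, and the factor $(\log q)^r$ coming from $\frac{d}{ds}q^{s}=(\log q)q^s$ --- so that the identity holds on the nose rather than up to $\doteq$; (iv) project everything to the $\pi$-isotypic component using the decomposition $W_*^d(\R)=W^d_{*,\mathrm{Eis}}\oplus\bigoplus_\pi W^d_{*,\pi}$, noting that on the cuspidal part the Eisenstein contribution drops out. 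I expect the main obstacle to be step establishing the identification of $[\YZ_d]$ with the $e+f$ action --- this requires understanding the Yun--Zhang correspondence $\YZ_d\subset M_d\times_{A_d}M_d$ geometrically well enough to see that its induced endomorphism of $\beta_*\Q_\ell$ is precisely the $\mathfrak{sl}_2$-operator, presumably by reducing to a local computation on the fibers of $\beta$ over the generic/versal point of $A_d$ where $M_d$ becomes a product of $\mathbb{P}^1$'s indexed by the $2d$ points, and then checking the correspondence acts as the sum over coordinates of the ``swap'' operator on each $\Q_\ell^2=H^*(\mathbb{P}^1)$, which is conjugate to $e+f$. A secondary difficulty is making the $r=0$ spectral identity genuinely tautological in the split-torus case and then justifying that differentiation commutes with all the cohomological operations, i.e.\ that taking the $r$-th derivative on the spectral side corresponds term-by-term to inserting $r$ copies of $[\YZ_d]$ on the geometric side.
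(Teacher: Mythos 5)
Your proposal is correct and follows essentially the same route as the paper: compare the analytic distribution $\J_r$ (the $r$-th derivative in $s$ of the regularized $[A]\times[T]$ period of the kernel, spectrally decomposed so the $\pi$-part is the left-hand side) with the geometric distribution $\I_r(f)=\langle[\Sht_A^{\mu,\leq d}],f*[\Sht_T^\mu]\rangle$, express both for $f=f_D$ as weighted Frobenius traces on $\beta_*\Q_\ell$ over the Hitchin base $A_d$, identify $[\YZ_d]$ with the $\mathfrak{sl}_2$-element $e+f$ acting on $(\Q_\ell^2)^{\otimes 2d}$ and use Schur--Weyl duality plus the conjugacy $e+f\sim h$ to match eigenvalue sums, then finish by Hecke-algebra localization, $\pi$-isotypic projection, and the $d\gg0$ bookkeeping. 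Two minor points where the paper differs from your sketch: here $\beta\colon N_d\to A_d$ is \emph{finite}, so the factor $\Q_\ell^2$ arises from the monodromy permutation representation of $S_{2d}$ on the two-bin splittings over the double cover (via realizing $\YZ_d$ as the pullback of a correspondence $H_d(W)$ on $\Sigma_{2d}(W)$), not from $H^*(\mathbb{P}^1)$ of positive-dimensional fibers; and the Eisenstein contribution does not simply ``drop out'' in this split-torus case --- both toric integrals in the Eisenstein term can be nonzero, so the paper needs the genuinely new vanishing Lemma \ref{eislemma} ($\lambda_\chi(f)=0$ for $f\in\mathcal{I}^{\Eis}$) before the spectral identity of Proposition \ref{J pi decomp} holds.
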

\begin{remark}
The left side is independent of the choice of $\phi$.
\end{remark} 
When $r = 0$, the formula is seen to be a tautology after unwinding the definition of the right hand side.  When $r  > 0$, it is helpful to interpret the formula in terms of $L$-functions.  Let $g$ be the genus of $X$.  Then the normalized $L$-function
\[\mathscr{L}(\pi,s) := q^{2(g-1)(s-\frac12)}L(\pi,s)\]
satisfies $\mathscr{L}(\pi,1-s) = \mathscr{L}(\pi,s).$  Moreover, for a suitably scaled $\phi \in \pi^U$, we have 
\[\mathscr{L}(\pi,2s + 1/2) = \mathscr{P}_A(\phi,s).\]  

The kind of information we learn from Theorem \ref{thm:main intro} depends on whether the base change $L$-function $\mathscr{L}(\pi_K,\frac12)$ vanishes at $s = 1/2$ or not.  Recall 
\[\mathscr{L}(\pi_K,s) = \mathscr{L}(\pi,s)\mathscr{L}(\pi \otimes \eta, s).\]
By Waldspurger's formula \cite[Rem.\ 1.3]{YZ}, $\mathscr{L}(\pi_K,\frac12) = 0$ if and only if $\int_{[T]} \bar\phi \, dt = 0$.

Thus, if $\mathscr{L}(\pi_K,\frac12) \neq 0$, Theorem \ref{thm:main intro} gives a geometric interpretation for the non-leading Taylor series coefficients of $\mathscr{L}(\pi,s)$, after dividing by the non-zero toric period integral.  To formulate this better, we consider the ratio with the leading term:  
\begin{bigtheorem}\label{ratio}
If $\mathscr{L}(\pi_K,\frac12) \neq 0$, then for even $r \geq 0$, we have $[\Sht_{A}^{r}]_\pi \neq 0$, and
\[\dfrac{\mathscr{L}^{(r)}(\pi,1/2)}{\mathscr{L}(\pi,1/2)} =  2^{-r}(\log q)^r \dfrac{\left\langle [\Sht_{A}^{r}]_\pi, [\Sht_{T}^r]_\pi\right\rangle_{\Sht_G^r}}{\left\langle [\Sht_{A}^{r}]_\pi, [\Sht_{T}^0]_\pi\right\rangle_{\Sht_G^0}}.\]
\end{bigtheorem}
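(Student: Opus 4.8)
The plan is to deduce Theorem \ref{ratio} from Theorem \ref{thm:main intro} by a purely analytic manipulation of the $L$-function expansion, combined with the Waldspurger formula that identifies the nonvanishing of $\mathscr{P}_T(\bar\phi)$ with the nonvanishing of $\mathscr{L}(\pi_K,\tfrac12)$. First I would apply Theorem \ref{thm:main intro} twice: once with the given even $r$, and once with $r=0$. The $r=0$ case reads
\[
\frac{\mathscr{P}_A(\phi,0)\,\mathscr{P}_T(\bar\phi)}{\langle\phi,\phi\rangle_{\mathrm{Pet}}} = \left\langle [\Sht_A^{0,\leq d}]_\pi, [\Sht_T^0]_\pi\right\rangle_{\Sht_G^0},
\]
and dividing the general-$r$ identity by this one cancels both $\mathscr{P}_T(\bar\phi)$ and $\langle\phi,\phi\rangle_{\mathrm{Pet}}$, yielding
\[
\frac{\mathscr{P}_A^{(r)}(\phi,0)}{(\log q)^r\,\mathscr{P}_A(\phi,0)} = \frac{\left\langle [\Sht_A^{r,\leq d}]_\pi, [\Sht_T^r]_\pi\right\rangle_{\Sht_G^r}}{\left\langle [\Sht_A^{0,\leq d}]_\pi, [\Sht_T^0]_\pi\right\rangle_{\Sht_G^0}}.
\]
For this division to be legitimate, I need the denominator $\langle [\Sht_A^{0,\leq d}]_\pi,[\Sht_T^0]_\pi\rangle$ to be nonzero; by the $r=0$ identity this equals $\mathscr{P}_A(\phi,0)\mathscr{P}_T(\bar\phi)/\langle\phi,\phi\rangle_{\mathrm{Pet}}$, and $\mathscr{P}_A(\phi,0)=\mathscr{L}(\pi,\tfrac12)$ (after scaling $\phi$), which is a factor of $\mathscr{L}(\pi_K,\tfrac12)\neq 0$, while $\mathscr{P}_T(\bar\phi)\neq 0$ by the cited Waldspurger criterion. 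This simultaneously gives the nonvanishing assertion $[\Sht_A^{r,\leq d}]_\pi\neq 0$: since the right-hand pairing at level $r$ must equal a nonzero multiple (coming from the $L$-function ratio, to be computed next) times a nonzero quantity, the class $[\Sht_A^{r,\leq d}]_\pi$ cannot be zero.

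The remaining step is to translate the left-hand side into the $L$-function ratio. Using the stated relation $\mathscr{L}(\pi,2s+\tfrac12)=\mathscr{P}_A(\phi,s)$ for suitably scaled $\phi$, the chain rule gives $\mathscr{P}_A^{(r)}(\phi,0) = 2^r\,\mathscr{L}^{(r)}(\pi,\tfrac12)$, where the derivative on the right is in the $s$-variable of $\mathscr{L}(\pi,s)$ evaluated at $s=\tfrac12$. Also $\mathscr{P}_A(\phi,0)=\mathscr{L}(\pi,\tfrac12)$. Substituting,
\[
\frac{\mathscr{P}_A^{(r)}(\phi,0)}{(\log q)^r\,\mathscr{P}_A(\phi,0)} = \frac{2^r}{(\log q)^r}\cdot\frac{\mathscr{L}^{(r)}(\pi,\tfrac12)}{\mathscr{L}(\pi,\tfrac12)},
\]
and rearranging yields exactly
\[
\frac{\mathscr{L}^{(r)}(\pi,\tfrac12)}{\mathscr{L}(\pi,\tfrac12)} = 2^{-r}(\log q)^r\,\frac{\left\langle [\Sht_A^{r,\leq d}]_\pi,[\Sht_T^r]_\pi\right\rangle_{\Sht_G^r}}{\left\langle [\Sht_A^{0,\leq d}]_\pi,[\Sht_T^0]_\pi\right\rangle_{\Sht_G^0}},
\]
which is the claimed formula. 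The scaling ambiguity in $\phi$ is harmless because both sides of Theorem \ref{thm:main intro} are bilinear of the same degree in $\phi$ and $\bar\phi$, so it cancels in the ratio, as already noted in the remark after Theorem \ref{thm:main intro}.

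I expect the only genuinely delicate point to be bookkeeping the normalizations: confirming that the same scaled choice of $\phi$ is used to identify $\mathscr{L}(\pi,2s+\tfrac12)$ with $\mathscr{P}_A(\phi,s)$ and that the functional equation $\mathscr{L}(\pi,1-s)=\mathscr{L}(\pi,s)$ is compatible with the substitution $s\mapsto 2s+\tfrac12$ sending $s=0$ to the center $s=\tfrac12$. One should also check the factor of $2$ from the chain rule does not interact with any $\log q$ already absorbed into the definition of the derivative on the geometric side; but since Theorem \ref{thm:main intro} already isolates $(\log q)^r$ explicitly, this is transparent. Finally, I would spell out that ``$d\gg 0$'' is the same threshold in both the $r$ and $r=0$ applications — permissible since, as stated in the excerpt, the Manin-Drinfeld classes stabilize for $d$ large, so one may take the maximum of the two thresholds. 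No step requires new geometry; everything is a formal consequence of Theorem \ref{thm:main intro} plus the cited Waldspurger nonvanishing criterion.
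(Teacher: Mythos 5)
Your derivation of the displayed formula is exactly the paper's route: Theorem \ref{thm:main intro} applied at $r$ and at $r=0$, division of the two identities (legitimate because $\mathscr{P}_T(\bar\phi)\neq 0$ by the Waldspurger criterion and $\mathscr{P}_A(\phi,0)=\mathscr{L}(\pi,\tfrac12)\neq 0$), and the chain-rule factor $2^r$ from $\mathscr{L}(\pi,2s+\tfrac12)=\mathscr{P}_A(\phi,s)$; the paper indeed treats this as an immediate corollary, and your normalization bookkeeping is fine.

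There is, however, a gap in your argument for the assertion $[\Sht_A^{r,\leq d}]_\pi\neq 0$, and as written it is circular. You say the level-$r$ pairing equals ``a nonzero multiple (coming from the $L$-function ratio) times a nonzero quantity,'' but you never establish that $\mathscr{L}^{(r)}(\pi,\tfrac12)\neq 0$ for even $r>0$; the hypothesis $\mathscr{L}(\pi_K,\tfrac12)\neq 0$ only gives nonvanishing of the value, not of higher central derivatives, and for a general entire function these could vanish. The missing input is the positivity of central derivatives: by \cite[Thm.\ B.2]{YZ} (which rests on the functional equation and the Riemann hypothesis for $\mathscr{L}(\pi,s)$), one has $\mathscr{L}^{(r)}(\pi,\tfrac12)>0$ for all even $r$ whenever $\mathscr{L}(\pi,\tfrac12)\neq 0$. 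With that cited (as the paper does in the remark following the theorem), the pairing on the right-hand side is nonzero, and only then can you conclude $[\Sht_A^{r,\leq d}]_\pi\neq 0$. The rest of your write-up needs no change.
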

\begin{remark}
That $[\Sht_{A}^{r}]_\pi \neq 0$ follows from the positivity of $\mathscr{L}^{(r)}(\pi,1/2)$ \cite[Thm. B.2]{YZ}.
\end{remark}
\begin{remark}
The precise version of (\ref{yzformula2}) implies 
\[\dfrac{\mathscr{L}^{(r)}(\pi,1/2)}{\mathscr{L}(\pi,1/2)} = (\log q)^r \dfrac{\left\langle [\Sht_{T}^r]_\pi, [\Sht_{T}^r]_\pi\right\rangle_{\Sht_G^r}}{\left\langle [\Sht_{T}^{0}]_\pi, [\Sht_{T}^0]_\pi\right\rangle_{\Sht_G^0}}.\]
This {\it suggests} that when $\mathscr{L}(\pi_K,\frac12) \neq 0$, we have $[\Sht_A^{r, \leq d}]_\pi = 2^rc_{\pi,K} [\Sht_T^r]_\pi$, where $c_{\pi,K}$ is an explicit non-zero ratio of period integrals.        
\end{remark}

If $\mathscr{L}(\pi_K,\frac12)  = 0$, Theorem \ref{thm:main intro} says nothing about $\mathscr{L}^{(r)}(\pi,1/2)$, but we still learn interesting information about algebraic cycles:  
\begin{bigtheorem}\label{consequence}
If $\mathscr{L}(\pi_K,\frac12) = 0$, then for even $r \geq  0$, we have
\[
\left\langle [\Sht_{A}^{r}]_\pi, [\Sht_{T}^r]_\pi\right\rangle_{\Sht_G^r} = 0. 
\]
\end{bigtheorem}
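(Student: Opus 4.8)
The plan is to deduce Theorem~\ref{consequence} directly from Theorem~\ref{thm:main intro}, combined with the function field Waldspurger criterion recalled above. First I would fix a nonzero $\phi \in \pi^U$, which exists by the standing assumption that $\pi$ is everywhere unramified, and write down the identity of Theorem~\ref{thm:main intro}:
\[
\dfrac{\mathscr{P}^{(r)}_A(\phi,0)\,\mathscr{P}_T(\bar\phi)}{(\log q)^r\,\langle \phi,\phi\rangle_{\mathrm{Pet}}} = \left\langle [\Sht_{A}^{r,\leq d}]_\pi,[\Sht_{T}^r]_\pi\right\rangle_{\Sht_G^r}.
\]
The point is that the left-hand side is a quotient of finite quantities whose denominator does not vanish, so that the vanishing of any single factor in the numerator forces the whole expression --- hence the intersection number on the right --- to vanish.

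Next I would check the two harmless finiteness/non-degeneracy points. The denominator $(\log q)^r\langle\phi,\phi\rangle_{\mathrm{Pet}}$ is nonzero because the Petersson pairing is a positive-definite Hermitian form and $\phi \neq 0$. And $\mathscr{P}^{(r)}_A(\phi,0)$ is a finite number: indeed $\mathscr{P}_A(\phi,s)$ equals, up to scaling, the normalized $L$-function $\mathscr{L}(\pi,2s+\tfrac12)$, which is a polynomial in $q^{-s}$ and in particular entire, so all of its $s$-derivatives at $0$ are finite. It then remains to observe that $\mathscr{P}_T(\bar\phi) = \int_{[T]}\bar\phi\,dt = 0$: this is exactly Waldspurger's criterion (following \cite[Rem.\ 1.3]{YZ}), which says that this toric period vanishes if and only if $\mathscr{L}(\pi_K,\tfrac12) = 0$, and we are assuming the latter. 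Plugging $\mathscr{P}_T(\bar\phi) = 0$ into the displayed identity yields
\[
\left\langle [\Sht_{A}^{r,\leq d}]_\pi,[\Sht_{T}^r]_\pi\right\rangle_{\Sht_G^r} = 0
\]
for all even $r \geq 0$ and all $d \gg 0$, which is the assertion.

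I do not expect any genuine obstacle here: all of the substance lives in Theorem~\ref{thm:main intro}, and Theorem~\ref{consequence} is a purely formal corollary of it together with the Waldspurger criterion. The only points deserving a sentence of care are the finiteness of $\mathscr{P}^{(r)}_A(\phi,0)$ (handled via entirety of $\mathscr{L}(\pi,s)$) and the non-degeneracy of the Petersson pairing; the parity restriction ``$r$ even'' is not an additional hypothesis but is built into the very definition of the cycles (cf.\ the remark on the choice of $\mu$).
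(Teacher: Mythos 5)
Your argument is correct and is exactly the paper's route: Theorem \ref{consequence} is derived as an immediate corollary of Theorem \ref{thm:main intro}, with the only input being Waldspurger's criterion that $\mathscr{L}(\pi_K,\tfrac12)=0$ forces $\mathscr{P}_T(\bar\phi)=0$, so the numerator on the left-hand side vanishes while the Petersson norm in the denominator is nonzero. The extra remarks on finiteness of $\mathscr{P}^{(r)}_A(\phi,0)$ and the parity of $r$ are fine but not needed beyond what the paper already builds in.
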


Write $r(\pi)$ for the order of vanishing of $\mathscr{L}(\pi,s)$ at $s = 1/2$.  If $r \geq r(\pi) \geq 0$ and $r(\pi \otimes \eta) = 0$, then  $[\Sht_T^r]_\pi \neq 0$ by (\ref{yzformula2}). As a consequence of Theorem \ref{consequence}, we have:
\begin{corollary}
If $r \geq r(\pi) > 0$ and $r(\pi \otimes \eta) = 0$, then either
\begin{enumerate}[$(i)$]
\item $[\Sht_{A}^{r}]_\pi = 0$, or
\item any lifts of $[\Sht_{A}^{r}]_\pi$ and $[\Sht_{T}^{r}]_\pi$ to the group $\Ch_{c,r}(\Sht_{G}^r)_\R$ are linearly independent.     
\end{enumerate}
\end{corollary}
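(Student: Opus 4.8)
The plan is to derive the dichotomy from Theorem~\ref{consequence} together with the self-intersection formula~(\ref{yzformula2}), using that the intersection pairing is rich enough to ``see'' the two classes unless the Manin--Drinfeld class already vanishes. First I would record the two arithmetic inputs. Since $r(\pi)>0$ we have $\mathscr{L}(\pi,1/2)=0$, while $r(\pi\otimes\eta)=0$ gives $\mathscr{L}(\pi\otimes\eta,1/2)\neq 0$; hence $\mathscr{L}(\pi_K,1/2)=\mathscr{L}(\pi,1/2)\,\mathscr{L}(\pi\otimes\eta,1/2)=0$, so Theorem~\ref{consequence} applies and
\[
\langle [\Sht_{A}^{r,\leq d}]_\pi,\, [\Sht_{T}^{r}]_\pi\rangle_{\Sht_G^r} = 0 .
\]
On the other hand $r$ is even and $r\geq r(\pi)$, so the positivity of central derivatives \cite[Thm.\ B.2]{YZ} gives $\mathscr{L}^{(r)}(\pi,1/2)\neq 0$; combined with $\mathscr{L}(\pi\otimes\eta,1/2)\neq 0$ and the precise form of~(\ref{yzformula2}) this yields $\langle [\Sht_{T}^{r}]_\pi,[\Sht_{T}^{r}]_\pi\rangle_{\Sht_G^r}\neq 0$, and in particular $[\Sht_{T}^{r}]_\pi\neq 0$ (as noted just before the statement).

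Next I would assume $(i)$ fails, so $[\Sht_{A}^{r,\leq d}]_\pi\neq 0$, and suppose for contradiction that some lifts $c_A$ of $[\Sht_{A}^{r,\leq d}]_\pi$ and $c_T$ of $[\Sht_{T}^{r}]_\pi$ in $\Ch_{c,r}(\Sht_G^r)_\R$ are linearly dependent. Both $c_A$ and $c_T$ are nonzero, since they lift nonzero classes, so the relation is forced to read $c_A=\lambda c_T$ with $\lambda\in\R^\times$. Pairing against $c_T$ with the ambient intersection form, and using that $c_A$ lies in $\widetilde W^d_A$ and $c_T$ in $\widetilde W_T$, I would conclude
\[
0=\langle [\Sht_{A}^{r,\leq d}]_\pi,[\Sht_{T}^{r}]_\pi\rangle_{\Sht_G^r}=\langle c_A,c_T\rangle=\lambda\,\langle c_T,c_T\rangle=\lambda\,\langle [\Sht_{T}^{r}]_\pi,[\Sht_{T}^{r}]_\pi\rangle_{\Sht_G^r},
\]
which is impossible because $\lambda\neq 0$ and the last pairing is nonzero. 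Hence no lifts can be linearly dependent, that is, $(ii)$ holds, and the corollary follows.

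The step that needs the most care — and the one I expect to be the real obstacle — is the chain of middle equalities in the last display: that for lifts lying in $\widetilde W^d_A$, resp.\ $\widetilde W_T$, the ambient intersection number on $\Ch_{c,r}(\Sht_G^r)_\R$ recovers the descended pairing of the $\pi$-isotypic projections, with no contribution from the classes quotiented out in forming $W^d_{A,\pi}$ and $W^d_{T,\pi}$, and no contribution from other spectral components. This is however forced by the constructions of $\S4$: $W^d_A$ is defined precisely by killing the classes pairing trivially with $\widetilde W_T$, the analogous statement for the self-pairing on $W^d_T$ holds once $d\gg 0$ and $W^d_T$ has stabilized, and Hecke-equivariance of the intersection pairing kills all cross-terms between $\pi$ and $\pi'\neq\pi$ as well as between the cuspidal and Eisenstein parts. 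Granting these compatibilities, the argument above is complete.
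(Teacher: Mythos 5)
Your reduction to Theorem \ref{consequence} plus nonvanishing of the Heegner--Drinfeld class matches the paper's (essentially unwritten) inference, and the first equality $\langle [\Sht_{A}^{r,\leq d}]_\pi,[\Sht_{T}^{r}]_\pi\rangle=\langle c_A,c_T\rangle$ is fine once ``lift'' is read as a preimage under $\widetilde W^d_A(\R)\to W^d_A(\R)$, resp.\ $\widetilde W_T(\R)\to W^d_T(\R)$. The genuine gap is the step you yourself flagged: $\langle c_T,c_T\rangle=\langle[\Sht_{T}^{r}]_\pi,[\Sht_{T}^{r}]_\pi\rangle$. In this paper there is \emph{no} descended self-pairing on $W^d_T$: the quotient $W^d_T$ is formed by killing exactly the classes that pair trivially against $\widetilde W^d_A$, not against $\widetilde W_T$. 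Consequently a lift $c_T$ of $[\Sht_{T}^{r}]_\pi$ is ambiguous up to elements $n\in\{c\in\widetilde W_T:\langle c,\widetilde W^d_A\rangle=0\}$, and such $n$ need not be orthogonal to $\widetilde W_T$, so $\langle c_T,c_T\rangle$ changes with the choice of lift and is not an invariant of the class. Your appeal to ``the analogous statement for the self-pairing on $W^d_T$ once $d\gg0$'' refers to a construction that \S 4 does not make; Hecke-equivariance controls cross-terms between isotypic pieces but says nothing about the self-intersection of a lift. There is the further conflation that the quantity $\langle[\Sht_{T}^{r}]_\pi,[\Sht_{T}^{r}]_\pi\rangle$ furnished by (\ref{yzformula2}) is computed with Yun--Zhang's projection (their quotient, adapted to the self-pairing of $\widetilde W_T$, or the cohomological class), which is a priori a different object from the class in $W^d_{T,\pi}$ defined here; even the paper's assertion ``$[\Sht_T^r]_\pi\neq 0$ by (\ref{yzformula2})'' silently passes between the two.

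The dichotomy can be proved without any self-pairing, and this is the repair I would suggest. Suppose $(i)$ fails and $c_A=\lambda c_T$ with $\lambda\neq 0$. For any $w\in\widetilde W_T$ one has $\langle c_A,w\rangle=\langle[\Sht_A^{r,\leq d}]_\pi,\bar w\rangle$ by definition of the descended pairing; decomposing $\bar w$ spectrally and using self-adjointness of the Hecke action (via the cycle class map and the cup product), together with the fact that $W^d_{T,\pi}$ is spanned by $[\Sht_T^r]_\pi$ because $\widetilde W_T$ is a cyclic $\mathscr{H}$-module, every such number is a multiple of $\langle[\Sht_A^{r,\leq d}]_\pi,[\Sht_T^r]_\pi\rangle$, which vanishes by Theorem \ref{consequence}. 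Hence $c_A$ pairs trivially with all of $\widetilde W_T$, so by the very definition of $W^d_A$ its image $[\Sht_A^{r,\leq d}]_\pi$ is zero, contradicting the failure of $(i)$. Note this argument never uses the linear dependence in an essential way, nor the nonvanishing of $[\Sht_T^r]_\pi$; it in fact shows that under the hypotheses the class $[\Sht_A^{r,\leq d}]_\pi\in W^d_{A,\pi}$ is forced to vanish, so the corollary holds, but not by the route you propose.
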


\begin{remark}
There is a similar corollary, without any mention of lifts, for the cycle classes $[\Sht_A^{r, \leq d}]_{\pi, \lambda}$ and $[\Sht_T^r]_{\pi,\lambda}$ in $H^{2r}_c(\Sht_G^r \otimes_\kk \bar \kk, \Q_\ell(r))_{\pi, \lambda}$, as in \cite[1.5]{YZ}.       
\end{remark}

\subsection{Questions}
Which of $(i)$ or $(ii)$ actually holds?  Our expectation is that $[\Sht_A^r]_\pi \neq 0 $ precisely when $r \geq 2r(\pi)$.  
Indeed, our approach suggests a special value formula for the {\it self-intersection} of the Manin-Drinfeld cycle $[\Sht_A^r]_\pi$ in terms of the $r$th derivative of the {\it square} $\mathscr{L}(\pi,s)^2$, which would indeed imply $[\Sht_A^r]_\pi \neq 0$, for $r \geq 2r(\pi)$.  
If true, it would suggest an approach to the conjecture of Birch and Swinnerton-Dyer, for elliptic curves over function fields, that avoids the use of Heegner points or indeed any mention of the quadratic extension $K$.  

When the cycles $[\Sht_A^r]_\pi$ {\it do} vanish, one must wonder whether there is a related Euler system.  Indeed, Kato's Euler system is constructed from Siegel units which witness the torsion of the cuspidal divisors in the modular Jacobian.  Are there similar such functions for Manin-Drinfeld cycles? 

\subsection{Methods}
Yun and Zhang's proof of (\ref{yzformula2}) proceeds by {\it geometrizing} Jacquet's relative trace formula (RTF) comparison approach to Waldspurger's formula \cite{Jacquet}.  One side of this comparison involves traces of Frobenius on $\beta_*\Q_\ell$, where  $\beta \colon M_d \to A_d$ is a version of the Hitchin fibration (one for each integer $d \geq 0$).  A crucial insight in \cite{YZ} is that one can extend this approach to the case $r > 0$, using a certain natural correspondence 
\[\xymatrix{
& { \YZ_d }  \ar[dr]^{} \ar[dl]_{} \\
{  M_d  }  \ar[dr]_{ \beta }  & &  {   M_d    }  \ar[dl]^{  \beta }, \\
& { A_d } 
}\]
which we call the {\it Yun-Zhang} correspondence. The induced operator 
\[[\YZ_d] \colon \beta_*\Q_\ell \to \beta_*\Q_\ell\] 
plays the role of differentiation on the geometric side of the RTF comparison.  When $r > 0$, Yun and Zhang show that the traces of the operator $[\YZ_d]^r \circ \Frob$ match up with the $r$th derivative of certain traces on the $L$-function side.

Our proof of (\ref{newformula}) makes use of this RTF approach as extended in our work with Howard \cite{HS}. In fact, one can view (\ref{newformula}) as a degenerate version of our formula in \cite{HS} for the intersection of Heegner-Drinfeld cycles coming from {\it distinct} quadratic field extensions of $F$. In this work we suppose one of the quadratic $F$-algebras is split.  The RTF comparison becomes a tautology when $r =0$, but to prove (\ref{newformula}) for $r > 0$, we must dig deeper into the representation theory of the Hitchin fibration.  The key insight is a representation-theoretic interpretation of the operator $[\YZ_d]$.  In our setting, the local system $\beta_*\Q_\ell$ comes from the representation $(\Q_\ell^2)^{\otimes 2d}$ of the symmetric group $S_{2d}$.  This is also a representation of $\mathfrak{sl}_2$, and the Yun-Zhang operator is given by the action of the element $e+f = (\begin{smallmatrix} 0 & 1 \\ 1 & 0\end{smallmatrix}) \in \mathfrak{sl}_2$. Using Schur-Weyl duality and some computations in representation theory, this allows us to compare the two RTF's and prove (\ref{newformula}).                  

\subsection{Outline}
In Section 2, we define an analytic distribution on $\mathscr{H}$.  Following \cite{HS,YZ}, we relate it to $L$-functions on one hand,  and weighted traces of Frobenius along the Hitchin fibration, on the other.  One new input here is Lemma  \ref{eislemma}. The Manin-Drinfeld cycles are defined in Section 3.  We then use intersection pairings to define a geometric distribution, and relate it to traces of Frobenius along {\it the same} Hitchin fibration.  In Section 4, we work out the representation theory of this particular Hitchin fibration and relate it to the Yun-Zhang correspondence. A computation shows that the analytic and geometric distributions agree, and the main theorem follows quickly from this.   
\subsection{Notation}
\label{ss:notation}


$|X|$ is the set of closed points of $X$. The absolute value 
\[
| \cdot |= \prod_{x \in |X|} |\cdot|_x   \colon \A^\times \to \Q^\times
\] 
 sends the uniformizer $\pi_x \in F_x$ with residue field $\kk_x$ to
$ q^{ - [\kk_x \colon \kk]}$.
If $H$ is an algebraic group, Haar measure on $H(\A)$ is normalized so that $H(\mathbb{O})$ has volume 1.


\subsection{Acknowledgements}
The author thanks D.\ Kazhdan for several stimulating conversations on this topic. He also thanks D.\ Disegni, B.\ Howard, E.\ Lindenstrauss, K.\ Madapusi Pera, and Y.\ Varshavsky.  Special thanks go to S.\ Zemel for his helpful insight into the computations in Section 4.  The author was supported by the
Israel Science Foundation (grant No. 2301/20).


\section{Analytic distribution}
\label{s:analytic}



\subsection{Automorphic forms}
\label{ss:basic automorphic}


We recall some notation from \cite{HS}. Denote by $\mathcal{A}(G)$ the space of automorphic forms \cite[\S 5]{Borel-Jacquet} on $G(\A)$, and by 
$
\mathcal{A}_\cusp(G) \subset \mathcal{A}(G)
$
the subspace of cuspidal automorphic forms.  
The subspace of unramified ($U$-invariant) cuspforms is finite-dimensional, and admits a decomposition
\[
\mathcal{A}_\cusp(G)^{U} = \bigoplus_{\mathrm{unr.\, cusp.\, }\pi } \pi^{U}
\]
as a direct sum of lines, where the sum is over the  unramified cuspidal automorphic representations $\pi \subset \mathcal{A}_\cusp(G)$.

Denote by $\mathscr{H}$ the Hecke algebra of compactly supported functions $f: U \backslash G(\A) /U \to \Q$.
The $\mathscr{H}$-module of compactly supported unramified $\Q$-valued automorphic forms is denoted
\[
\mathscr{A} = C_c^\infty(  G(F) \backslash G(\A) / U ,\Q ).
\]
We let $\mathscr{A}_\C = \mathscr{A} \otimes \C$ denote  the corresponding complex space, so that 
\begin{equation}\label{harder}
\mathcal{A}_\cusp(G)^{U} \subset \mathscr{A}_\C \subset \mathcal{A} (G)^{U}.
\end{equation}

Following  \cite[\S 4.1]{YZ}, we view the Satake transform as a $\Q$-algebra surjection
$
a_{\mathrm{Eis}} : \mathscr{H} \to \Q  [\Pic_X(\kk)]^{\iota_\Pic},
$ 
for a particular involution $\iota_\Pic$ of $ \Q [\Pic_X(\kk)]$.   The Eisenstein ideal  is
\begin{equation}\label{eisenstein def}
\mathcal{I}^{\mathrm{Eis}} := \ker \big( a_\Eis  :  \mathscr{H} \to \Q  [\Pic_X(\kk)]^{\iota_\Pic}  \big).
\end{equation}
As in \cite[\S 7.3]{YZ}, define $\Q$-algebras
\begin{align*}
\mathscr{H}_\mathrm{aut} & = \mathrm{Image} \big( \mathscr{H}  \to \End_\Q (\mathscr{A}) \times \Q[\Pic_X(\kk)]^{\iota_\Pic} \big) \\
\mathscr{H}_\cusp &= \mathrm{Image}  \left(  \mathscr{H} \to \End_\C(  \mathcal{A}_\cusp(G)^{U} ) \right).
\end{align*}
The quotient map   $\mathscr{H}\to \mathscr{H}_\cusp$  factors through  $\mathscr{H}_\mathrm{aut}$,
and the resulting map
\begin{equation}\label{automorphic decomp}
\mathscr{H}_\mathrm{aut} \to \mathscr{H}_\cusp \times \Q  [\Pic_X(\kk)]^{\iota_\Pic} 
\end{equation}
is an isomorphism \cite[Lemma 7.16]{YZ}.

For each unramified cuspidal automorphic representation $\pi \subset \mathcal{A}_\cusp(G)$, denote by
\[
\lambda_\pi : \mathscr{H} \to \C
\] 
the character through which the Hecke algebra acts on the line $\pi^{U}$. 
 As in \cite[\S 7.5.1]{YZ}, the $\Q$-algebra $\mathscr{H}_\cusp$
is isomorphic to a finite product of number fields, and the product of all characters $\lambda_\pi$ induces an isomorphism
\[
 \mathscr{H}_\cusp \otimes \C  \iso \bigoplus_{    \mathrm{unr.\, cusp.\, }\pi   }  \C.
\]

The above the discussion holds word-for-word if $G$ is replaced by  $G_1$.

\begin{lemma}\label{lem:naive transfer}\cite[Lem.\ 3.3]{HS}
There is a canonical bijection
\[
G(F) \backslash G(\A) / U \to G_1(F) \backslash G_1(\A) / U_1.
\]
It induces an isomorphism
$\mathcal{A}(G)^{U} \iso \mathcal{A}(G_1)^{U_1}$,
respecting the subspaces of cusp forms.
\end{lemma}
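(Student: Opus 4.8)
The plan is to unwind both sides through the adelic double-coset description and reduce everything to the split torus construction.

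\begin{proof}[Proof sketch]
The statement to be proved is Lemma~\ref{lem:naive transfer}, which asserts a canonical bijection $G(F)\backslash G(\A)/U \to G_1(F)\backslash G_1(\A)/U_1$ inducing an isomorphism $\mathcal{A}(G)^U \iso \mathcal{A}(G_1)^U_1$ respecting cusp forms. I would first recall that, by the definitions in \S1.2, both $G$ and $G_1$ arise as $\underline{\Aut}_{\co_X}$ of rank-$2$ vector bundles (namely $\nu_{0*}\co_{Y_0}$ and $\nu_*\co_Y$) modulo the central $\G_m$. Since any two rank-$2$ vector bundles on $X$ are Zariski-locally isomorphic, the group schemes $G_1$ and $G$ are Zariski-locally (hence \'etale-locally, hence in particular over every completion $F_x$ and over $\A$) isomorphic to $\PGL_2$. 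The key point is that this local isomorphism is \emph{canonical up to inner automorphism}: the sheaf $\underline{\mathrm{Isom}}$ between the two rank-$2$ bundles is a torsor under $\GL_2$, and passing to $\PGL_2$ the ambiguity is exactly inner, so conjugation by a local trivialization gives a well-defined isomorphism $G_1(\A) \iso G(\A)$ carrying $U_1$ to $U$, and the induced map on $G_1(F)\backslash G_1(\A)/U_1$ is independent of all choices.

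Concretely, I would argue as follows. Both double-coset spaces are in canonical bijection with the groupoid of isomorphism classes of the relevant objects: $G(F)\backslash G(\A)/U$ parameterizes $\PGL_2$-bundles on $X$ (equivalently, rank-$2$ bundles modulo twist by line bundles), and likewise for $G_1$. The uniformization $\Bun_{\PGL_2}(\kk) \iso G(F)\backslash G(\A)/U$ (and its $G_1$-analogue) is standard; the canonical bijection in the lemma is then simply the identity on $\Bun_{\PGL_2}$, once one observes that $\underline{\Aut}_{\co_X}(\mathcal V)$-torsors for any fixed rank-$2$ $\mathcal V$ all yield the same stack $\Bun_{\PGL_2}$. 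This makes the bijection manifestly canonical without needing to choose a local trivialization, and it is visibly compatible with the Hecke action since the Hecke correspondences are defined purely in terms of modifications of $\PGL_2$-bundles at closed points, which make sense on $\Bun_{\PGL_2}$ regardless of the chosen form. Transporting functions along this bijection then gives the isomorphism $\mathcal{A}(G)^U \iso \mathcal{A}(G_1)^{U_1}$.

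It remains to check that cusp forms correspond to cusp forms. Here I would note that the cuspidality condition is the vanishing of constant terms along the unipotent radicals of the (rational) Borel subgroups. The Borels of $G_1$ over $F$ correspond to $\nu_*\co_Y$-stable line subbundles, i.e. to the rational points of the flag variety, and under the canonical identification of $\PGL_2$-bundles these match the Borels of $G$; the unipotent radicals and their adelic integration match as well. Hence the constant-term maps are intertwined by the isomorphism, and cuspidality is preserved in both directions. The main obstacle is purely bookkeeping: making precise that the ``non-canonical isomorphism $U_1\subset G_1(\A) \simeq U\subset G(\A)$'' can be upgraded to a canonical identification of the \emph{quotient} double-coset spaces (and of the spaces of automorphic forms). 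Since \cite{HS} already contains this as Lemma~3.3, I would simply cite it; the sketch above indicates the argument. $\square$
\end{proof}
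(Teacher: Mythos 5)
Your proposal is consistent with the paper, which gives no argument of its own for this lemma beyond the citation to \cite[Lem.\ 3.3]{HS}; the cited proof is exactly the identification of both double-coset spaces with rank-$2$ bundles on $X$ modulo twisting by line bundles (Weil uniformization), as you sketch. One small slip worth fixing: the rational Borel subgroups of $G_1$ correspond to arbitrary $F$-lines in $K$ viewed as a $2$-dimensional $F$-vector space, not to ``$\nu_*\co_Y$-stable'' sub-line-bundles (the latter would describe the torus $T$), but this does not affect the argument that cuspidality is preserved.
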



\subsection{Definition of the distribution}
\label{ss:orbital notation}


The $X$-scheme
\[
\widetilde{J} = \underline{\mathrm{Iso}}_{\co_X}( \nu_{*} \co_{Y} ,  \nu_{0*} \co_{Y_0} ) 
\]
is both a left $\widetilde{G}$-torsor and a right $\widetilde{G}_1$-torsor.  Similarly $
J=\widetilde{J}  /\G_m$ is both a left $G$-torsor and a right $G_1$-torsor.
There are canonical identifications
\[
A(F) \backslash J(F) / T(F) = \widetilde{A}(F) \backslash \widetilde{J}(F) / \widetilde{T}(F) = K_0^\times \backslash \mathrm{Iso}( K , K_0 ) / K^\times.
\]
Thus, \cite[\S 2]{HS} allows us to define the invariant map 
\begin{equation}\label{orbit invariant}
A(F) \backslash J(F) / T(F) \map{\inv}  \{ \xi \in K : \Tr_{K/F}(\xi)=1 \}.
\end{equation}
Since $K$ and $K_0$ are non-isomorphic, the map $\mathrm{inv}$ is a bijection. 
\begin{remark}
Here is an explicit description of $\mathrm{inv}$, when $\mathrm{char} \, \kk \neq 2$.  Choose $F$-algebra embeddings $\alpha_ 1 \colon K \hookrightarrow M_2(F)$ and $\alpha_2 \colon K_0 \hookrightarrow M_2(F)$.  Let $e = \alpha_2(1,0)$ and $f = \alpha_2(0,1)$ be the image of the two idempotents.  Then $M_2(F) = e\alpha_1(K) + f\alpha_1(K)$.  If 
\[g \in K_0^\times \backslash \mathrm{Iso}( K , K_0 ) / K^\times \simeq A(F) \backslash G(F)/T(F)\] is represented by $e\alpha + f \beta \in G(F)$,  then $\mathrm{inv}(g) = \frac{2\alpha\bar\beta}{\Tr(\alpha \bar \beta)}$.       
\end{remark}

\begin{lemma}\label{lem:coset transfer}\cite[Lem.\ 3.4]{HS}
There is a canonical homeomorphism
\begin{equation}\label{coset switch}
 U \backslash J(\A) / U_1 \iso U \backslash G(\A) /U.
\end{equation}
\end{lemma}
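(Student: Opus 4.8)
The plan is to promote the pointwise bijection on double cosets to a homeomorphism by exhibiting it as induced from an honest isomorphism of $\A$-spaces at finite level, then checking topological properties place by place. First I would recall, as in \cite[\S 2]{HS}, that $\widetilde{J}$ is simultaneously a left $\widetilde{G}$-torsor and a right $\widetilde{G}_1$-torsor over $X$, and that passing to the quotient by the central $\G_m$ makes $J$ a left $G$-torsor and a right $G_1$-torsor. Because $Y \to X$ and $Y_0 \to X$ are both everywhere unramified double covers, $\nu_*\co_Y$ and $\nu_{0*}\co_{Y_0}$ are locally isomorphic rank-$2$ vector bundles on $X$ with the same local structure at every closed point; hence the torsor $J$ is Zariski-locally (indeed étale-locally, and by the unramifiedness even Zariski-locally at each $x \in |X|$) trivial. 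Concretely, for each $x$ the $U_x$-bi-set $J(F_x)$ is nonempty and one picks a local trivialization $j_x \in J(\co_x)$; away from a finite set of $x$ one can do this compatibly. This furnishes an isomorphism of adelic coset spaces $U\backslash J(\A)/U_1 \to U\backslash G(\A)/U$ given (after choosing a global-ish trivialization on a dense open) by $UgU_1 \mapsto U \phi(g) U$ for an appropriate bijective $\phi$, and one checks it is independent of the chosen trivialization up to the $U$-$U_1$ actions.

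Next I would verify the three things needed to call this map a homeomorphism of the double-coset spaces, which both carry the quotient topology from the restricted-product (totally disconnected, locally compact) topology on $J(\A)$ and $G(\A)$: (i) it is a bijection — this is exactly the identification $A(F)\backslash J(F)/T(F) = K_0^\times\backslash \mathrm{Iso}(K,K_0)/K^\times$ refined to the level of $U$-$U_1$ orbits, combined with the fact that $J(\co_x)$ is a single $U_x$-$U_{1,x}$ double coset at every $x$ (again by unramifiedness), so that adelically the $U$-$U_1$ orbits on $J(\A)$ match the $U$-$U$ orbits on $G(\A)$; (ii) continuity — the map lifts to a continuous (in fact locally given by translation-by-a-fixed-element) map $J(\A) \to G(\A)$, which descends to the quotients; (iii) openness — the same local description shows the lift is open, and open maps descend to open maps on quotient spaces. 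Since both double-coset spaces are discrete once one quotients by the open compact $U$, $U_1$ on one side (they are countable discrete sets, being indexed by $\Pic$-type data), continuity plus bijectivity already forces the inverse to be continuous; but I would spell out the openness argument anyway for robustness and to handle the full adelic topology before taking quotients.

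The main obstacle I anticipate is bookkeeping the compatibility of the local trivializations $j_x$ across all $x\in|X|$ so that the resulting adelic map is well defined and canonical — i.e.\ checking that different choices differ by elements of $U$ and $U_1$ and hence give the same map on $U\backslash J(\A)/U_1$, and that the construction does not secretly depend on the auxiliary embeddings $\alpha_1,\alpha_2$ used above to describe $\mathrm{inv}$. This is precisely the kind of argument carried out in \cite[Lem.\ 3.4]{HS}, so in the body of the paper I would either cite that lemma directly (the hypotheses there — split versus nonsplit quadratic algebra, everywhere unramified — are exactly ours, with the roles of the two tori on the two sides as stated) or transcribe its proof with the obvious notational substitutions. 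The only genuinely new point relative to \cite{HS} is that here one of the two quadratic algebras, namely $K_0 = F\oplus F$, is split, which only \emph{simplifies} matters: the invariant map \eqref{orbit invariant} becomes a bijection onto $\{\xi\in K:\Tr_{K/F}(\xi)=1\}$, so there are no "no-orbit" or "boundary" issues to exclude, and the local double cosets $U_x\backslash J(F_x)/U_{1,x}$ are in visibly order-preserving bijection with $U_x\backslash G(F_x)/U_x$ via the Cartan-type decomposition.
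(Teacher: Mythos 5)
Your proposal is correct and follows essentially the same route as the paper, which simply defers to \cite[Lem.\ 3.4]{HS}: since $\widetilde J=\underline{\mathrm{Iso}}_{\co_X}(\nu_*\co_Y,\nu_{0*}\co_{Y_0})$ consists of $\co_X$-module (not algebra) isomorphisms of rank-$2$ bundles, one has $J(\co_x)\neq\varnothing$ at every place, and choosing $j_x\in J(\co_x)$ trivializes the $G$--$G_1$ bitorsor integrally, carrying $U_1$ to $U$ and giving the canonical identification of double cosets, with independence of choices and discreteness of both sides handling canonicity and the homeomorphism claim exactly as you outline. The only cosmetic point is that the appeal to unramifiedness (and the ``compatibility away from a finite set'') is unnecessary here, since local triviality of $J$ holds for any rank-$2$ bundles and the $j_x$ may be chosen independently at each place.
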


Now fix $f\in \mathscr{H}$.  Use the bijection of Lemma \ref{lem:coset transfer} to view $f$ as a function 
\[
f : U \backslash J(\A) / U_1 \to \Q,
\]
and  define a function on $G(\A) \times G_1(\A)$ by
 \begin{equation}\label{the kernel}
\K_f(g,g_1) = \sum_{\gamma \in J(F)} f(g^{-1}\gamma g_1).
\end{equation}
   
 Recall the notation $[ T ] = T (F)\backslash T(\A)$ 
from the introduction,
and recall the normalization of Haar measures of \S \ref{ss:notation}.
Define a distribution on $\mathscr{H}$ by 
\begin{equation}\label{J distribution def}
\J(f,s) = \int^\reg_{[A] \times [T]} \K_f(a, t)\,  |a|^{2s}  da \, dt.
\end{equation}
Here,  $|\cdot| : A(\A) \to \R^\times$ is the homomorphism  $\left| \left(\begin{smallmatrix} a_1 & \\ & a_2 \end{smallmatrix} \right)\right| = |a_1/a_2|$.  

The integral in (\ref{J distribution def}) need not converge absolutely, so we regularize it.  First define 
\[
A(\A)_n = \left\{a \in A(\A) :   |a| = q^{-n} \right\} 
\]
and $[A]_n = A(F) \backslash A(\A)_n$, and set
\begin{align}\label{partial distribution}
\J_n(f,s) & = \int_{[A]_n \times [T]} \K_f(a,t) |a|^{2s} \,  da \, dt \\
& =  q^{-2ns} \int_{[A]_n \times [T]} \K_f(a,t)  \,  da \, dt \nonumber.
\end{align}
This integral is absolutely convergent, by compactness of $[A]_n$ and $[T]$.

\begin{proposition}\label{prop:convergence}
The integral $\J_n(f,s)$ vanishes for $|n|$ sufficiently large.  
\end{proposition}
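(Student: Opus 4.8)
The plan is to analyze the support of the kernel $\K_f(a,t)$ as a function of $a \in [A]_n$, and show that for $|n|$ large the summands $f(a^{-1}\gamma t)$ all vanish. Recall that $f \in \mathscr{H}$ is compactly supported on $U\backslash J(\A)/U_1$, so there is a finite set $S$ of $U$–$U_1$ double cosets outside of which $f$ vanishes. First I would fix a fundamental domain, or at least a relatively compact set of representatives, for $[T] = T(F)\backslash T(\A)$ (possible since $[T]$ is compact) and similarly note that $A(\mathbb{O})$ is compact. Thus as $t$ ranges over $[T]$ and the $A(\mathbb{O})$-part of $a$ varies, these contribute only a bounded distortion; the essential variable is the image of $a$ in $A(F)\backslash A(\A)/A(\mathbb{O})$ intersected with the ``norm $= q^{-n}$'' locus, which is a discrete set growing in a controlled way with $n$.

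The key step is to combine this with the invariant map of (\ref{orbit invariant}). Each $\gamma \in J(F)$ with $f(a^{-1}\gamma t) \neq 0$ lies in a fixed finite union of orbits $A(F)\backslash J(F)/T(F)$, determined by $S$ via the compatibility of $\inv$ with the double-coset structure (as in \cite[\S 2]{HS}). For such $\gamma$, I would compare $|a|$ to the invariant $\inv(a^{-1}\gamma t) = \inv(\gamma) \in K$: since $f$ is supported on finitely many double cosets and $\inv$ is a bijection onto $\{\xi \in K : \Tr_{K/F}(\xi) = 1\}$, the local components $\inv(\gamma)_x$ are constrained to a compact set, which forces a lower bound on $|a|_x$ (equivalently on the valuation of $a$) at each place and an upper bound coming from the product formula / the requirement $\Tr(\xi) = 1$. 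Concretely, the split torus element $a$ acts on the lattice picture by scaling the two idempotent components $e, f$, and the condition that $e\alpha + f\beta$ (with $\alpha\bar\beta/\Tr(\alpha\bar\beta)$ bounded) remain in the fixed compact support of $f$ pins down $|a| = |\alpha/\beta|$ up to a bounded range. Hence $\K_f(a,t) = 0$ unless $|a| = q^{-n}$ with $n$ in a fixed finite interval, which is exactly the claim.

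The main obstacle I expect is making the final boundedness argument uniform in $t \in [T]$ and in the $U$-translates — that is, checking that the compact set of ``admissible invariants'' really does cut out only finitely many values of $n$, rather than a set that a priori could be unbounded because the two idempotent directions of the split torus scale in opposite ways. This is handled by the relation $\mathrm{inv}(g) = \tfrac{2\alpha\bar\beta}{\Tr(\alpha\bar\beta)}$: both $|\alpha|$ and $|\beta|$ (not just their ratio) are controlled once $g$ lies in the compact support of $f$ and one normalizes using $U$-invariance, so $n = \mathrm{val}(\alpha) - \mathrm{val}(\beta)$ is bounded above and below. An alternative, cleaner route — which I would use if the direct estimate gets unwieldy — is to invoke the analogous convergence/vanishing statement already proved in \cite[\S 3]{HS} for the non-degenerate case and specialize it to the split torus $A$, since the geometry of the orbit space $A(F)\backslash J(F)/T(F)$ here is a degeneration of the one treated there; the cited lemmas \ref{lem:coset transfer} and the invariant-map formalism are precisely what make this specialization legitimate.
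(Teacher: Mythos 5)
Your proposal is correct and takes essentially the route the paper intends: the paper's own proof is just the citation ``As in \cite[Prop.\ 3.7]{HS}'', and that argument is exactly your support analysis — compact support of $f$ together with compactness of $[T]$ shows, orbit by orbit via the invariant map, that only finitely many double cosets in $A(F)\backslash J(F)/T(F)$ contribute and that each contributes only for $n$ in a bounded range, so $\K_f$ vanishes identically on $[A]_n\times[T]$ for $|n|$ large. Your stated fallback of specializing the argument of \cite[\S 3]{HS} is literally what the paper does; just be careful in the write-up that the bound on $n$ comes from requiring $a^{-1}\gamma t$ itself to lie in the (finitely many) supporting double cosets, not from constraints on $\inv(\gamma)$, which is independent of $a$ and $t$.
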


\begin{proof}
As in \cite[Prop.\ 3.7]{HS}.
\end{proof}

Using Proposition \ref{prop:convergence},  the regularized integral  (\ref{J distribution def}) is defined as
\[
\J(f,s) = \sum_{n \in \Z} \J_n(f,s).
\] 
This is a Laurent polynomial in $q^s$. Define
\begin{equation} \label{trunc J}
\J_n(\gamma, f,s)  = \int_{[A]_n \times [T]} \K_{f,\gamma}(a,t) |a|^{2s} \,  da \, dt,
\end{equation}
and
\[
\J(\gamma,f,s) =\sum_{n\in \Z} \J_n(\gamma,f,s),
\]
 so that there are decompositions
\begin{equation}\label{J decomp}
\J(f,s)   =   \sum_{\gamma \in A(F) \backslash J(F)/T(F)}  \J(\gamma, f,s)  = \sum_{\substack{ \xi\in K \\ \Tr_{K/F}(\xi) =1  }} \J( \xi ,f,s).
\end{equation}
In the final expression,  we have used (\ref{orbit invariant}) to define
\[
\J(\xi,f,s)= \J(\gamma,f,s)
\] 
for the unique double coset $\gamma \in A(F) \backslash J(F)/T(F)$ satisfying $\inv(\gamma)=\xi$.


\subsection{Spectral decomposition}
\label{ss:spectral}


Define for any $\phi \in \mathcal{A}_\cusp(G)^{U}$, the period integral 
\[
\mathscr{P}_A(\phi , s ) = \int_{[A]} \phi(a) |a|^{2s} \, da.
\]
This integral is absolutely convergent for all $s\in \C$.
Using Lemma \ref{lem:coset transfer} to view $\phi \in \mathcal{A}_\cusp(G_1)^{U_1}$, define another period integral
\[
\mathscr{P}_T(\phi ) = \int_{[T]} \phi(t)  \, dt.
\]
As $[T]$ is compact, this integral is also absolutely convergent.

Recall the Eisenstein ideal $\mathcal{I}^\Eis \subset \mathscr{H}$ of (\ref{eisenstein def}).

\begin{proposition}\label{J pi decomp}
Every $f\in\mathcal{I}^\Eis$ satisfies 
\begin{equation}\label{spectral}
\J (f,s) =  \sum_{\mathrm{unr.\, cusp.\,}\pi }  \lambda_\pi(f)   \frac{  \mathscr{P}_A(  \phi ,s) \mathscr{P}_T(\overline{\phi} )}{\langle \phi, \phi \rangle },
\end{equation}
where the sum is over all unramified cuspidal automorphic representations $\pi \subset \mathcal{A}_\cusp(G)$, and $\phi \in \pi^{U}$ is any nonzero vector.
Moreover, $\J(f,s)$ only depends on the image of $f$ under the quotient map  $\mathscr{H} \to \mathscr{H}_\mathrm{aut}$.
\end{proposition}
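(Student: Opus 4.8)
## Proof Plan for Proposition \ref{J pi decomp}

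\textbf{Overall strategy.} The plan is to compute $\J(f,s)$ by expanding the automorphic kernel $\K_f$ along the spectral decomposition of $\mathcal{A}(G)$, rather than along the orbital decomposition (\ref{J decomp}). Because $f \in \mathcal{I}^\Eis$ lies in the Eisenstein ideal, the Eisenstein and residual contributions to the kernel are annihilated, leaving only the cuspidal part. One then integrates the resulting cuspidal kernel against $|a|^{2s}$ over $[A]$ and over $[T]$, and recognizes the two integrals as the period integrals $\mathscr{P}_A(\phi,s)$ and $\mathscr{P}_T(\bar\phi)$. This is the standard mechanism behind Jacquet's relative trace formula, adapted to the degenerate (one split torus) situation of \cite{HS}; since the kernel $\K_f$ is built from $J(F)$ rather than $G(F)$, I would first use the $\widetilde G$-torsor structure on $\widetilde J$ (equivalently Lemmas \ref{lem:naive transfer} and \ref{lem:coset transfer}) to transport everything to $G(\A) \times G_1(\A)$ and match $\K_f$ with the usual automorphic kernel $\sum_{\gamma \in G(F)} f(g^{-1}\gamma g_1)$ under the identifications already set up.

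\textbf{Key steps, in order.}
\emph{Step 1.} Identify $\K_f(g,g_1)$, via the canonical bijection $G(F)\backslash G(\A)/U \cong G_1(F)\backslash G_1(\A)/U_1$ of Lemma \ref{lem:naive transfer} and the homeomorphism (\ref{coset switch}), with the kernel of the operator $R(f)$ acting on $\mathcal{A}(G_1)^{U_1} \cong \mathcal{A}(G)^U$. Here one uses that $f$ is $U$-bi-invariant and compactly supported, so the sum over $J(F)$ is finite for each $(g,g_1)$ and the kernel represents the Hecke operator.
\emph{Step 2.} Use the spectral decomposition $\mathcal{A}(G)^U = \mathcal{A}_\cusp(G)^U \oplus (\text{Eisenstein}/\text{residual part})$. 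Since $f \in \mathcal{I}^\Eis = \ker(a_\Eis)$, the Hecke operator $R(f)$ kills the non-cuspidal summand (by definition of $a_\Eis$ and the compatibility (\ref{automorphic decomp})), so $\K_f$ equals its cuspidal projection
\[
\K_f(g,g_1) = \sum_{\mathrm{unr.\,cusp.\,}\pi} \lambda_\pi(f)\, \frac{\phi_\pi(g)\,\overline{\phi_\pi(g_1)}}{\langle \phi_\pi,\phi_\pi\rangle},
\]
where $\phi_\pi$ spans $\pi^U$ and one is slightly cavalier about the transfer $g_1 \in G_1(\A)$; the Petersson normalization is forced by the reproducing-kernel property.
\emph{Step 3.} Substitute this finite sum into (\ref{partial distribution}), interchange the (finite) sum with the integrals over $[A]_n$ and $[T]$ — justified since each $\J_n$ is absolutely convergent and vanishes for $|n|\gg 0$ by Proposition \ref{prop:convergence} — and sum over $n\in\Z$. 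The $g$-integral becomes $\int_{[A]}\phi_\pi(a)|a|^{2s}\,da = \mathscr{P}_A(\phi_\pi,s)$ and the $g_1$-integral becomes $\int_{[T]}\overline{\phi_\pi(t)}\,dt = \mathscr{P}_T(\overline{\phi_\pi})$, giving (\ref{spectral}).
\emph{Step 4.} For the last sentence: $\J(f,s)$ is a $\C$-linear functional in $f$ that factors through $\lambda_\pi(f)$ for all $\pi$ (by the formula just proved) and is zero on $\mathcal{I}^\Eis$'s complement data only through these characters; combined with the isomorphism (\ref{automorphic decomp}) $\mathscr{H}_\mathrm{aut} \iso \mathscr{H}_\cusp \times \Q[\Pic_X(\kk)]^{\iota_\Pic}$ and the fact that $\prod_\pi \lambda_\pi$ identifies $\mathscr{H}_\cusp\otimes\C$ with $\bigoplus_\pi \C$, the distribution depends only on the image of $f$ in $\mathscr{H}_\mathrm{aut}$.

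\textbf{Main obstacle.} The delicate point is Step 2: justifying that $\K_f$ genuinely coincides with the \emph{cuspidal} automorphic kernel with the correct Petersson normalization, and that the Eisenstein ideal condition is exactly what is needed to discard the continuous and residual spectrum. Over a function field the continuous spectrum is a finite union of tori and the argument is cleaner than over $\Q$, but one still must be careful that the "kernel" built from summing $f$ over $J(F)$ — a torsor, not the group itself — is transported correctly by Lemmas \ref{lem:naive transfer} and \ref{lem:coset transfer} so that it represents the Hecke action on $\mathcal{A}(G_1)^{U_1}\cong\mathcal{A}(G)^U$. The convergence/interchange issues in Step 3 are genuine but are dispatched by Proposition \ref{prop:convergence} (finitely many nonzero $\J_n$, each absolutely convergent over the compact quotients $[A]_n$ and $[T]$), exactly as in \cite[Prop.\ 3.7, \S3]{HS}, so I would simply cite that.
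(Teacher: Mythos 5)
Your overall route is the same as the paper's (spectrally expand the kernel $\K_f$, integrate against $|a|^{2s}$ over $[A]$ and over $[T]$, and sum over $n$ using Proposition \ref{prop:convergence}), but there is a genuine gap exactly at your Step 2, and it is the crux of the proposition. It is not true ``by definition of $a_\Eis$ and the compatibility (\ref{automorphic decomp})'' that $f\in\mathcal{I}^\Eis$ makes the whole non-cuspidal spectrum disappear from the kernel. What \cite[Thm.\ 4.3]{YZ} gives for $f\in\mathcal{I}^\Eis$ is only $\K_f=\K_{f,\cusp}+\K_{f,\mathrm{sp}}$: the continuous (Eisenstein) part is gone, but the residual part survives as $\sum_{\chi}\lambda_\chi(f)\,\chi(\det g)\chi(\det g_1)$, summed over unramified quadratic characters $\chi$, with $\lambda_\chi(f)=\int_{G(\A)}f(g)\chi(\det g)\,dg$. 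One must then show that these terms contribute nothing to $\J_n(f,s)$. In the setting of \cite{HS} this is cheap because one of the two toric period integrals of $\chi\circ\det$ vanishes; here, since $A$ is the split torus, for $\chi=1$ \emph{both} factors $\int_{[A]_n}\chi(\det a)|a|^{2s}da$ and $\int_{[T]}\chi(\det t)\,dt$ are nonzero, so that argument does not carry over. The required input is precisely Lemma \ref{eislemma}: for $f\in\mathcal{I}^\Eis$ and $\chi$ unramified one has $\lambda_\chi(f)=(\rho_\chi(f)\bm{1}_U,\bm{1}_U)=\mathrm{tr}\,\rho_\chi(f)=\chi(a_\Eis(f))=0$, via the unramified principal series $\rho_\chi$. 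This computation is the only genuinely new analytic ingredient of the section, and your proposal assumes it rather than proves it; as written, the assertion that ``$R(f)$ kills the non-cuspidal summand'' is unjustified.

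Two smaller remarks. First, your Step 4 should be phrased as in the paper: if $f$ has trivial image in $\mathscr{H}_\mathrm{aut}$ then $f$ annihilates $\mathscr{A}_\C$ and lies in $\mathcal{I}^\Eis$; by the inclusion $\mathcal{A}_\cusp(G)^U\subset\mathscr{A}_\C$ one gets $\lambda_\pi(f)=0$ for all $\pi$, hence $\J(f,s)=0$ by (\ref{spectral}), and the general statement follows by linearity applied to a difference $f-f'$. Second, your Steps 1 and 3 (transport of the kernel via Lemmas \ref{lem:naive transfer} and \ref{lem:coset transfer}, and the interchange of sum and integral using Proposition \ref{prop:convergence} and compactness of $[A]_n$ and $[T]$) are fine and agree with the paper.
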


\begin{proof}
View  (\ref{the kernel}) as a function on $G(\A) \times G(\A)$, and invoke the decomposition 
\[
\K_f (x,y) = \K_{f,\cusp} (x,y)  +  \K_{f,\mathrm{sp}} (x,y)  
\]
of \cite[Theorem 4.3]{YZ}, to convert all three terms back into functions on  $G(\A) \times G_1(\A)$.  The result is a decomposition
\begin{align*}
\K_f (g,g_1)   &  =  \sum_{\mathrm{unr.\, cusp.\,}\pi } \lambda_\pi(f)  \cdot  \frac{ \phi(g) \overline{\phi(g_1)} }{ \langle \phi ,\phi \rangle}  \\
&\quad  + \sum_{\mathrm{unr.\, quad.\,}\chi } \lambda_\chi(f)   \cdot  \chi(\det(G))  \cdot   \chi(\det(g_1))  . 
\end{align*}
The first  sum is over all unramified cuspidal representations $\pi$,  and $\phi \in \pi^{U}$ is any nonzero vector.
The second sum is over all unramified quadratic characters 
\[
\Pic(X) \iso  F^\times \backslash \A^\times / \mathbb{O}^\times  \map{\chi} \{ \pm 1\},
\]
and 
\[
\lambda_\chi(f) = \int_{G(\A)} f(g) \chi(\det(g))\, dg.
\]

The distribution (\ref{partial distribution}) now decomposes as
\[
\J_n(f,s) =   \sum_{\mathrm{unr.\, cusp.\,}\pi }  \J_n^\pi(f,s) 
+  \sum_{\mathrm{unr.\, quad.\,}\chi } \J_n^\chi(f,s) ,
\]
where we have set
\[
\J^\pi _n(f,s) =  \frac{ \lambda_\pi(f)  }{  \langle \phi ,\phi \rangle }
\left( \int_{[A]_n } \phi(a)   |a|^{2s} \,  da  \right)
\left( \int_{ [T]}   \overline{\phi(t)}  \,   dt \right)
\]
and
\begin{equation}\label{special}
\J^\chi _n(f,s)  = \lambda_\chi(f)  
\left(   \int_{[A]_n } \chi(\det(a))  |a|^{2s} \,  da    \right)
\left(  \int_{ [T]}   \chi(\det(t))  \,   dt \right) .
\end{equation}

Next we show that $\J_n^\chi(f,s)=0$ for all such $\chi$.  Note that when $\chi = 1$, {\it both} toric integrals in (\ref{special}) are non-zero, so the proof from \cite{HS} does not carry over. 
The vanishing in all cases follows from:
\begin{lemma}\label{eislemma}
If $f \in \mathcal{I}^\Eis$ and $\chi$ is unramified, then $\lambda_\chi(f) = 0$. 
\end{lemma}

\begin{proof}
Let $B \subset G$ be the Borel subgroup of upper triangular matrices. Following \cite[\S4]{YZ}, we consider the right translation representation $\rho_\chi$ of $G(\A)$ on the space $V_{\chi}$ of functions 
\[\phi \colon G(\A) \to \C\]
such that $\phi(bg) = \chi(b)\phi(g)$ for all $b \in B(\A)$, $g \in G(\A)$.
The space $V_{\chi}$ is canonically identified (by restriction) with a space of functions on $U$.  The latter space carries an inner product 
\[(\phi, \phi') = \int_{U} \phi(u)\bar \phi'(y) du.\]
Now let $\phi = \bm{1}_{U}$.  Since $f \in \mathcal{I}^\Eis$, we have \cite[\S4]{YZ} 
\[(\rho_\chi (f)\phi, \phi) = \mathrm{tr}\, \rho_\chi(f) = \chi(a_\Eis(f)) = 0.\]
On the other hand, we compute
\begin{align*}
(\rho_\chi (f)\phi, \phi) &= \int_{U} \int_{g = bu' \in G(\A)} f(u^{-1}g)\chi(\det b) dg du\\
 &= \int_{G(\A)} f(g)\chi(\det g) dg \\
 &= \lambda_\chi(f).
\end{align*}
We have used that $\chi$ is unramified, $f$ is spherical, and $U$ has volume 1.  We conclude that $\lambda_\chi(f) = 0$.
\end{proof}
 
 We therefore have
 \[
\J_n(f,s) =    \sum_{\mathrm{unr.\, cusp.\,}\pi }   \J_n^\pi(f,s) ,
\]
and (\ref{spectral}) follows by summing both sides over $n$.

For the final claim, suppose $f$ has trivial image under $\mathscr{H}\to\mathscr{H}_\mathrm{aut}$.  This implies that $f$ annihilates $\mathscr{A}_\C$, 
and lies in $\mathcal{I}^\Eis$.  The first inclusion in  (\ref{harder}) implies that  $\lambda_\pi(f)=0$ for all $\pi$, and so $\J(f,s)=0$ by (\ref{spectral}).
\end{proof}


\subsection{Geometric expression}
\label{ss:analytic traces}


Fix $d \geq 0$, and let $i$ be an integer in the range $0 \leq i \leq 2d$. Recall from \cite[\S 3]{HS} the commutative diagram of $\kk$-schemes
\begin{equation}\label{fundamental 2}
\xymatrix{
 {  N_{ (i,2d-i) }  }   \ar[d]_{\beta_i}  \ar[r]_{\delta} &  { \Sigma_{i, 2d-i}(Y) } \ar[d]^{\otimes}  \\
 {    A_d  }  \ar[d]_{\Tr}   \ar[r]_{\nu^\sharp} &    \Sigma_{2d}(Y)  \\
  {    \Sigma_d(X)  } 
}
\end{equation}
in which the square is cartesian.
   We briefly recall the definitions of these  schemes and maps in terms of their $S$-points, for any scheme $S$.

First, $\Sigma_d(X)(S)$  is the set of isomorphism classes of pairs $(\Delta, \zeta )$ of
\begin{itemize}
\item
 a line bundle $\Delta$ on $X_S=X\times_\kk S$ of degree $d$,
\item
a nonzero section $\zeta \in  H^0(X_S, \Delta)$.
\end{itemize}
We have a canonical isomorphism
\begin{equation}\label{symmetric id}
\Sigma_d(X) \iso \mathrm{Sym}^d(X) \iso S_d\backslash X^d,
\end{equation}
and $\Sigma_d(X)$ is a smooth projective $\kk$-scheme. We also set 
\[\Sigma_{i, 2d-i}(Y) = \Sigma_{i}(Y) \times_\kk \Sigma_{2d-i}(Y),\] parameterizing effective divisors of bidegree $(i,2d-i)$ on $Y \coprod Y$. 

Next,  $A_d(S)$ is the set of isomorphism classes of  pairs  $(\Delta, \xi )$ consisting of
\begin{itemize}
\item
a  line bundle $\Delta$ on $X_{S}$ of degree $d$,
\item
a section $\xi \in H^0( Y_S , \nu^*\Delta)$ with nonzero trace 
\[
\mathrm{Tr}_{Y/X}(\xi) = \xi+\xi^{\sigma_1} \in H^0(X_S,\Delta).
\] 
\end{itemize}
The arrows in (\ref{fundamental 2}) emanating from $A_d$ are
\[
\Tr (\Delta,\xi) = (\Delta , \Tr_{Y/X} (\xi) ) \quad\mbox{ and }\quad  \nu^\sharp (\Delta, \xi)  = (\nu^*\Delta ,\xi).
\]
$A_d$ is a quasi-projective $\kk$-scheme.

Finally, $\widetilde{N}_{(i,2d-i)}(S)$ is the groupoid of triples $(\mathcal{M},  \mathcal{L}, \phi)$ consisting of
\begin{itemize}
\item line bundles $\mathcal{M} = (\mathcal{M}', \mathcal{M}'')\in \mathrm{Pic}(Y_{0S})$ and $\mathcal{L}\in \mathrm{Pic}(Y_S)$ satisfying 
\[
2\deg(\mathcal{M}') - i=  \deg(\mathcal{L})  =2\deg(\mathcal{M}'') - (2d-i),
\]
\item a morphism  $\phi  :   \nu_*\mathcal{L} \to \mathcal{M}' \oplus \mathcal{M}'' $ of rank 2 vector bundles on $X_S$
with nonzero determinant.
\end{itemize}
The Picard group $\Pic(X_S)$ acts on $\widetilde{N}_{i,2d-i}(S)$ by simultaneous twisting, and
the quotient
\[
N_{ (i,2d-i) } = \widetilde{N}_{(i,2d-i)} /\Pic_X 
\]
is a scheme by \cite[3.12]{HS}.  
The map $\delta$ sends $(\mathcal{M}, \mathcal{L}, \phi)$ to $(\mathrm{div}(\bm{a}), \mathrm{div}(\bm{d)})$, where 
$\bm{a}$
and 
$\bm{d}$
are the diagonal matrix entries in the map $\nu^*\phi \colon \mathcal{L} \oplus \mathcal{L}^\sigma \to \nu^*\M' \oplus \nu^*\M''$.  

\begin{proposition}\label{prop:Nsmooth}\cite[3.13]{HS}
Let $g$ and $g_1$ be the genera of $X$ and $Y$, respectively.
\begin{enumerate}
\item
The morphisms $\beta_i$ and $\otimes$ in (\ref{fundamental 2}) are finite.   
\item
 If $d\ge 2 g_1-1 $ then $N_{(i,2d-i)}$ is  smooth over $\kk$ of  dimension $2d -g +1$.
\end{enumerate}
\end{proposition}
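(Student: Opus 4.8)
\emph{Part (1).} The plan is to observe that $\otimes\colon\Sigma_i(Y)\times\Sigma_{2d-i}(Y)\to\Sigma_{2d}(Y)$ is addition of effective divisors: it is proper, source and target being symmetric powers of the projective curve $Y$, and quasi-finite, since a degree-$2d$ effective divisor has only finitely many subdivisors of degree $i$; hence $\otimes$ is finite (in fact finite flat, by miracle flatness, as source and target are regular of the same dimension with $0$-dimensional fibres). Since the square in~(\ref{fundamental 2}) is cartesian, $\beta_i$ is the base change of $\otimes$ along $\nu^\sharp$, hence also finite; and $\beta_i$ is surjective because $\otimes$ clearly is.

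\emph{Part (2): dimension.} Granting Part (1), $\beta_i$ is finite and surjective onto $A_d$, so $\dim N_{(i,2d-i)}=\dim A_d$, and I would compute $\dim A_d=2d-g+1$ from the projection $A_d\to\Pic_X^d$, $(\Delta,\xi)\mapsto\Delta$. Its fibre over $\Delta$ is the complement in $\mathbb P\big(H^0(Y,\nu^*\Delta)\big)$ of the linear subspace where $\Tr_{Y/X}\xi=0$; here is where the hypothesis enters, for $\deg_Y(\nu^*\Delta)=2d\ge 4g_1-2>2g_1-2$ forces $H^1(Y,\nu^*\Delta)=0$, so $h^0$ is locally constant and $A_d\to\Pic_X^d$ is a Zariski-locally trivial fibration with smooth fibres over a smooth base. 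Using $g_1=2g-1$ (Riemann--Hurwitz for the unramified double cover $\nu$) this gives $\dim A_d=g+(2d+1-2g)=2d-g+1$; the same computation shows $A_d$ is smooth, though only its dimension is needed.

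\emph{Part (2): smoothness.} I would prove smoothness by deformation theory of the moduli problem defining $\widetilde N_{(i,2d-i)}$, smoothness then descending along the quotient $N_{(i,2d-i)}=\widetilde N_{(i,2d-i)}/\Pic_X$. The three line bundles $\mathcal M',\mathcal M'',\mathcal L$ deform without obstruction, so the only obstruction is to lifting $\phi$ along a deformation of the bundles; it lies in
\[ H^1\big(X,\mathcal{H}om_X(\nu_*\mathcal{L},\mathcal{M}'\oplus\mathcal{M}'')\big)\ \cong\ H^1\big(Y,\mathcal{L}^{-1}\otimes\nu^*\mathcal{M}'\big)\oplus H^1\big(Y,\mathcal{L}^{-1}\otimes\nu^*\mathcal{M}''\big), \]
the identification coming from Grothendieck duality for the finite \'etale morphism $\nu$ (so $\omega_\nu=\mathcal O_Y$ and $\nu^!=\nu^*$), followed by $H^1(X,\nu_*(\cdot))=H^1(Y,\cdot)$. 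The two line bundles have degrees $i$ and $2d-i$ on $Y$, and $d\ge 2g_1-1$ forces at least one of $i$, $2d-i$ to exceed $2g_1-2$, so one of these two summands vanishes; WLOG $H^1(Y,\mathcal{L}^{-1}\otimes\nu^*\mathcal{M}'')=0$, and the obstruction lives in the surviving summand $H^1(Y,\mathcal{L}^{-1}\otimes\nu^*\mathcal{M}')$. The derivative of the obstruction in the direction of a first-order deformation $\beta\in H^1(Y,\mathcal O_Y)$ of $\mathcal L$ is, up to sign, $\beta\mapsto\beta\cup\bm{a}$, where $\bm{a}\in H^0(Y,\mathcal{L}^{-1}\otimes\nu^*\mathcal{M}')$ is the section corresponding under the above duality to the $\mathcal M'$-component of $\phi$. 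Since $\det\phi\neq0$, both components of $\phi$ are nonzero, so $\bm{a}$ is a nonzero section of a line bundle on the integral curve $Y$; the cokernel of $\mathcal O_Y\xrightarrow{\bm{a}}\mathcal{L}^{-1}\otimes\nu^*\mathcal{M}'$ is torsion, whence the long exact sequence gives that $\cup\bm{a}\colon H^1(Y,\mathcal O_Y)\twoheadrightarrow H^1(Y,\mathcal{L}^{-1}\otimes\nu^*\mathcal{M}')$ is surjective. Thus deforming $\mathcal L$ alone already surjects onto the obstruction space, the deformation problem is unobstructed, $N_{(i,2d-i)}$ is smooth, and its dimension is $2d-g+1$ by the previous paragraph.

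\emph{Main obstacle.} The delicate point is the uniform-in-$i$ claim of the last paragraph. For the ``extreme'' values of $i$ the naive picture of $\widetilde N$ as (an open in) a vector bundle over a product of Picard schemes breaks down, because the $\Hom$-spaces jump rank; one genuinely needs the cup-product argument, and one must recognize that $d\ge 2g_1-1$ is precisely the hypothesis that decouples $H^1(Y,\mathcal{L}^{-1}\otimes\nu^*\mathcal{M}')$ from $H^1(Y,\mathcal{L}^{-1}\otimes\nu^*\mathcal{M}'')$, so that cupping with a single nonzero component of $\phi$ (deforming only $\mathcal L$) kills the whole obstruction. Secondary bookkeeping is the verification of the Grothendieck-duality identification in this unramified setting and a careful transfer of smoothness and dimension from $\widetilde N$ to $N$ across the $\Pic_X$-quotient (which absorbs the diagonal $\G_m$-automorphisms).
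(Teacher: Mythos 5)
The paper offers no internal proof of this proposition — it is quoted verbatim from \cite[Prop.\ 3.13]{HS} — so your argument can only be judged on its own terms, and on those terms it is essentially correct. Part (1) is exactly the expected argument: $\otimes$ is the addition map of symmetric powers of $Y$, proper and quasi-finite hence finite (and flat by miracle flatness), and $\beta_i$ is its base change along $\nu^\sharp$ because the square in (\ref{fundamental 2}) is cartesian. Your smoothness argument is also sound: the obstruction space for the tuple $(\mathcal{M}',\mathcal{M}'',\mathcal{L},\phi)$ is a quotient of $H^1(Y,\mathcal{L}^{-1}\otimes\nu^*\mathcal{M}')\oplus H^1(Y,\mathcal{L}^{-1}\otimes\nu^*\mathcal{M}'')$, the two degrees are $i$ and $2d-i$, the hypothesis $d\ge 2g_1-1$ kills at least one summand, both components of $\phi$ are nonzero since $\det\phi\neq 0$, and cupping $H^1(Y,\co_Y)$ with a nonzero section of a line bundle on the integral curve $Y$ is surjective onto its $H^1$; this kills the obstruction by modifying the deformation of $\mathcal{L}$ alone, and smoothness descends along the smooth surjection $\widetilde N_{(i,2d-i)}\to N_{(i,2d-i)}$.

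Two remarks. First, a small but real gap: ``finite and surjective onto $A_d$'' only gives $\dim N_{(i,2d-i)}=\dim A_d$ for the top-dimensional component, not that every component has dimension $2d-g+1$, which is what the statement asserts. This is easy to patch with what you already have: $\beta_i$ is flat (base change of your finite flat $\otimes$), hence open as well as closed, so each connected component of $N_{(i,2d-i)}$ maps onto the connected scheme $A_d$ and therefore has dimension exactly $2d-g+1$; alternatively, compute the tangent space directly (the Euler characteristic of the two-term deformation complex gives $2d$ for $\widetilde N$, hence $2d-g+1$ after the $\Pic_X$-quotient). Second, your ``main obstacle'' paragraph overstates the need for the cup-product argument: since at least one of $i$, $2d-i$ exceeds $2g_1-2$, one may always fibre $\widetilde N_{(i,2d-i)}$ over the Picard/divisor data on whichever side has large degree, exhibiting it as an open subscheme of a (projectivized) bundle of constant rank $2d-i-g_1+1$ (or $i-g_1+1$); this more elementary description gives smoothness and the dimension simultaneously and is presumably closer to the argument in \cite{HS}, though your deformation-theoretic route is a perfectly valid alternative.
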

Now let $D$ be an effective  divisor on $X$ of degree $d$.   The constant function $1$ defines a global section of $\co_X(D)$, and hence a point
$(\co_X(D), 1) \in \Sigma_d(X)(\kk)$.  Define $A_D$ as the fiber product
\[
\xymatrix{
{ A_D } \ar[rr] \ar[d] & &  { A_d } \ar[d] \\
{ \Spec(\kk) } \ar[rr]^{  ( \co_X(D) ,1 )  } & & { \Sigma_d(X) .}
}
\]
Then there is a canonical bijection
\begin{equation}\label{invariant domain}
A_D(\kk) \iso  \left\{ \xi \in K : \begin{array}{c} \Tr_{K/F}(\xi) =1 \\ \mathrm{div}(\xi) + \nu^*D \ge 0   \end{array}  \right\} .
\end{equation}

The Hecke algebra $\mathscr{H}$ has a $\Q$-basis $\{ f_D \}$ indexed by the effective divisors $D\in \Div(X)$, and defined as follows (see also \cite[\S 3.1]{YZ}).  Let $S_D$ be the image of the set 
\[\left\{ M \in \mathrm{Mat}_2(\mathbb{O})\colon \mathrm{div}(\det M) = D\right\}\]
in $\PGL_2(\A) = G(\A)$.  Then $f_D \colon U\backslash G(\A)/U \to \Q$ is the characteristic function of $S_D$.

We are now ready to give a geometric interpretation of the orbital integral $\J(\xi,f_D,s)$ appearing in (\ref{J decomp}).  Using Lemma \ref{lem:coset transfer}, we regard $f_D$  as a compactly supported function 
\begin{equation}\label{D hecke}
f_D : U \backslash J(\A) / U_1 \to \Q.
\end{equation}
Let $\ell$ be any prime different from the characteristic of $\kk$.   The following theorem is proved exactly as in \cite[3.17]{HS}, this time with a trivial local system.    
\begin{theorem}\label{prop:geometric orbital}
Fix  $\xi\in K$ with $\Tr_{K/F}(\xi)=1$,  and view $A_D(\kk)$ as a subset of  $K$ via $(\ref{invariant domain})$.
\begin{enumerate}
\item If $\xi \not\in A_D(\kk)$  then $\J( \xi , f_D,s) = 0$.
\item If  $\xi \in A_D(\kk)$  then 
 \[
 \J( \xi , f_D,s) =
\sum_{ i = 0}^{2d}
 q^{  2(i-d)s }  \cdot
 \Tr \big(\Frob_\xi  ; \,   \big( \beta_{i*}{\Q_\ell}  \big)_{\bar \xi} \big),
\]
where $\bar{\xi}$ is a geometric point above $\xi:\Spec(\kk) \to A_D \hookrightarrow A_d$.
\end{enumerate}       
\end{theorem}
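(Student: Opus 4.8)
The plan is to mimic the proof of \cite[Thm.\ 3.17]{HS} verbatim, the only difference being that here the relevant local system on $A_d$ is the constant sheaf $\Q_\ell$ rather than a nontrivial local system. First I would unwind the definition of $\J(\xi, f_D, s)$. By (\ref{trunc J}) and the decomposition (\ref{J decomp}), $\J(\xi, f_D, s) = \sum_{n \in \Z} \J_n(\gamma, f_D, s)$ where $\gamma$ is the double coset with $\inv(\gamma) = \xi$, and $\J_n(\gamma, f_D, s)$ is an integral over $[A]_n \times [T]$ of the $\gamma$-term $\K_{f_D, \gamma}(a,t)|a|^{2s}$ of the kernel (\ref{the kernel}). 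Since $f_D$ is the characteristic function of $S_D$ (transported via Lemma \ref{lem:coset transfer} to $U \backslash J(\A)/U_1$), each orbital integral $\J_n(\gamma, f_D, s)$ counts, with the weight $|a|^{2s} = q^{-2ns}$ on $[A]_n$, the number of lattice-theoretic configurations in the $\gamma$-orbit whose ``invariant'' is $\xi$ and whose associated divisor data has the right shape relative to $D$. Standard unwinding (exactly as in \cite{HS, YZ}) identifies the nonvanishing of these counts with the condition $\xi \in A_D(\kk)$ via the bijection (\ref{invariant domain}): if $\divis(\xi) + \nu^* D \not\ge 0$ there are simply no integral points, giving part (1); if $\xi \in A_D(\kk)$, the count is finite and its generating function in $q^{-2ns}$ is precisely what we must identify with the trace expression.

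The second step is to identify the count with point-counts on the fibers of $\beta_i$. The integer $n$ ranges over $-d \le n \le d$ (outside this range the integral vanishes by Proposition \ref{prop:convergence}), and reindexing by $i = n + d \in \{0, \dots, 2d\}$ matches the weight $q^{-2ns} = q^{2(i-d)s}$ appearing in the statement. The key geometric input is that the fiber $\big(\beta_{i*}\Q_\ell\big)_{\bar\xi}$, as a Galois module, has dimension equal to $\#\,\beta_i^{-1}(\xi)(\bar\kk)$, and that by the Grothendieck--Lefschetz trace formula $\Tr(\Frob_\xi; (\beta_{i*}\Q_\ell)_{\bar\xi}) = \#\,\beta_i^{-1}(\xi)(\kk)$ since $\beta_i$ is finite (Proposition \ref{prop:Nsmooth}(1)), so $\beta_{i*}\Q_\ell$ is lisse of rank the fiber cardinality on the relevant locus and Frobenius simply permutes the geometric points in each $\kk$-orbit. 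On the other side, an $S$-point of $N_{(i,2d-i)}$ over the point $(\co_X(D), 1)$ of $\Sigma_d(X)$ lying over $\xi \in A_D(\kk) \subset A_d(\kk)$ is, after unwinding the moduli description of $\widetilde N_{(i,2d-i)}$ and the action of $\Pic_X$, exactly a choice of the lattice data counted by the orbital integral $\J_n(\gamma, f_D, s)$ with $n = i - d$; here the cartesian square in (\ref{fundamental 2}) and the invariant description given after (\ref{orbit invariant}) (writing $g = e\alpha + f\beta$ and $\inv(g) = 2\alpha\bar\beta/\Tr(\alpha\bar\beta)$) make the correspondence between ``$\phi$ with diagonal entries of the prescribed divisors'' and ``double cosets with invariant $\xi$'' explicit. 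Summing over $i$ then yields the stated formula.

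I would structure the write-up as: (a) recall that $\J(\xi, f_D, s) = 0$ unless $\xi \in A_D(\kk)$, citing the analogous step in \cite{HS}; (b) for $\xi \in A_D(\kk)$, express $\J_n(\xi, f_D, s)$ as a weighted point count and match it with $\#\,\beta_{i*}^{-1}(\xi)(\kk)$, $i = n+d$; (c) apply the trace formula and sum over $i$. Each of (a)--(c) is formally identical to \cite[\S3]{HS} with the nontrivial local system there replaced by $\Q_\ell$, so the proof can legitimately be compressed to ``exactly as in \cite[3.17]{HS}, with a trivial local system'', as the statement already indicates.

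The main obstacle — or rather the one point that genuinely requires checking rather than citation — is verifying that replacing the nontrivial local system of \cite{HS} by the constant sheaf $\Q_\ell$ does not disturb any of the intermediate identifications: in \cite{HS} the local system encodes the automorphic data attached to the \emph{other} (nonsplit) torus, and here that torus has been replaced by the split torus $A$, so the corresponding local data is trivial and the weighting in the orbital integral reduces to the plain power $|a|^{2s}$ with no character twist. One must confirm that the degeneration is clean: that the definition of $N_{(i,2d-i)}$ via $\widetilde N_{(i,2d-i)}(S)$ (line bundles $\mathcal M', \mathcal M''$ on the split cover $Y_0$ together with $\phi\colon \nu_*\mathcal L \to \mathcal M' \oplus \mathcal M''$) is precisely the Hitchin-type space whose $\beta_i$-fibers count the relevant $A(F)$-orbits, and that the bijection (\ref{invariant domain}) together with the explicit invariant formula genuinely realizes $A_D(\kk)$ inside the $\beta_i$-fibers. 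Once this compatibility is in hand, the trace formula and the summation over $i$ are routine.
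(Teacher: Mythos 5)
Your proposal is correct and takes essentially the same route as the paper, whose entire proof is the citation ``proved exactly as in \cite[3.17]{HS}, this time with a trivial local system''; your unwinding of $\J(\xi,f_D,s)$ as a weighted count of points in the fibers of $\beta_i$, with the trace of Frobenius on $(\beta_{i*}\Q_\ell)_{\bar\xi}$ reducing to a point count because the sheaf is constant, is exactly the content of that argument. One small bookkeeping slip: with the convention $|a|=q^{-n}$ on $[A]_n$, the reindexing matching the stated weight $q^{2(i-d)s}$ is $n=d-i$ rather than $i=n+d$.
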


\section{Geometric distribution}
\label{s:intersection theory}


Fix an  integer $r\ge 0$, and an $r$-tuple $\mu=(\mu_1,\ldots, \mu_r) \in \{ \pm 1\}^r$ satisfying  the parity condition
$\sum_{i=1}^r \mu_i =0.$  In particular,  $r$ is even.


\subsection{Heegner-Drinfeld and Manin-Drinfeld cycles}
\label{ss:cycles}


We recall some notation from \cite{HS,YZ}.
Recall that $G = \PGL_2$ over $X$.

Let  $\Bun_{G}$ be the algebraic stack parametrizing $G$-torsors on $X$, and let 
 $\Hk_{G}^\mu$  be the Hecke stack parameterizing $G$-torsors on $X$ with $r$ modifications of type $\mu$.  
 It comes equipped with morphisms
 \[
 p_0,\ldots, p_r : \Hk_{G}^\mu \to \Bun_{G}
 \]
 and $p_X : \Hk_{G}^\mu \to  X^r$.  For the definitions, see \cite[\S 5.2]{YZ}.

The moduli stack $\Sht_G^\mu$ of $G$-shtukas of type $\mu$ sits in the cartesian diagram
\[
\xymatrix{\Sht^{\mu}_{G}\ar[d]\ar[rr] && \Hk^{\mu}_{G}\ar[d]^{(p_{0},p_{r})}\\
\Bun_{G}\ar[rr]^{(\mathrm{id},\Fr)} && \Bun_{G}\times \Bun_{G} .}
\]  
It is a Deligne-Mumford stack, locally of finite type over $\kk$, and the  morphism 
\[
\pi_{G} : \Sht_{G}^\mu \to X^r
\]
induced by $p_X$  is  separated and smooth of relative dimension $r$.  

The \'etale double covers $\nu_0:Y_0\to X$ and $\nu:Y\to X$ determine tori $A$ and $T$, both rank 1 over $X$. 
Let $\mathrm{Bun}_{T}$ be the moduli stack of $T$-torsors on $X$. 
Denote by   $\Hk_{T}^\mu$ the Hecke stack parameterizing $T$-torsors with $r$ modifications of type $\mu$.
It comes with morphisms
\[
p_1,\ldots , p_r:  \Hk_{T}^\mu \to \mathrm{Bun}_{T},
\]
and   $p_{Y} \colon \Hk_{T}^\mu \to Y^r$.
 See \cite[\S5.4]{YZ} for the definitions.

The stack of $T$-shtukas of type $\mu$ is defined by the cartesian square
\begin{equation}\label{Sht_T definition}
\xymatrix{\Sht^{\mu}_{T}\ar[d]\ar[rr] && \Hk^{\mu}_{T}\ar[d]^{(p_{0},p_{r})}\\
\mathrm{Bun}_{T}\ar[rr]^{(\mathrm{id},\Fr)} && \mathrm{Bun}_{T}\times \mathrm{Bun}_{T} . }
\end{equation}
It is Deligne-Mumford over $\kk$, and the morphism
\[
\pi_{T} : \Sht_{T}^\mu \to Y^r
\]
induced by $p_{Y}$ is  finite \'etale.  Thus, $\Sht_{T}^\mu$ is smooth and proper over $\kk$, and of dimension $r$.

For the diagonal torus $A$, we similarly define the stack $\Sht^{\mu}_A$ by the cartesian square
\begin{equation}\label{Sht_A definition}
\xymatrix{\Sht^{\mu}_{A}\ar[d]\ar[rr] && \Hk^{\mu}_{A}\ar[d]^{(p_{0},p_{r})}\\
\mathrm{Bun}_{A}\ar[rr]^{(\mathrm{id},\Fr)} && \mathrm{Bun}_{A}\times \mathrm{Bun}_{A} . }
\end{equation}
Here, $\Bun_A$ is the stack of $A$-torsors, whose $S$-points parameterize line bundles $\mathcal{M} = (\mathcal{L}_1,\mathcal{L}_2)$ on $Y_{0S}$, modulo simultaneous twisting by $\LL \in \Pic_{X_S}$. 

More concretely, let $\widetilde \Sht_{A}^\mu$ be the stack whose $S$-points is the groupoid of $(\mathcal{M}, y_1,\ldots, y_r, \iota)$, where $\mathcal{M} = (\mathcal{L}_1,\mathcal{L}_2)$ is a line bundle on $Y_{0S}$, the $y_i \colon S \to Y_0$ are $S$-points of $Y_0$, and $\iota$ is an isomorphism
\[\iota \colon \mathcal{M}^\Fr \simeq \mathcal{M}\left(\sum_{i = 1}^r \mu_{i} \Gamma_{y_i}\right).\] 
Here, $\Gamma_{y_i}$ is the graph of $y_i$.  Then $\Sht_A^\mu = \widetilde\Sht_A^\mu/\Pic_X(\kk)$, where $\Pic_X(\kk)$ acts by simultaneous twisting on $\mathcal{L}_1$ and $\mathcal{L}_2$.  
  
Let $\pi_{A} : \Sht_{A}^\mu \to Y_0^r$ be the morphism which remembers the legs $y_i \in Y_0$ of the $A$-shtuka.
\begin{lemma}
The morphism $\pi_{A}$ is a $\Pic_X(\kk)$-torsor.
In particular, $\Sht_{A}^\mu$ is a smooth Deligne-Mumford stack over $\kk$,  {\it locally} of finite type.  
\end{lemma}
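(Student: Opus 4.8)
The plan is to unwind the definition of $\Sht_A^\mu$ and reduce the claim to two independent statements: that $\pi_A$ is surjective with fibers a torsor under $\Pic_X(\kk)$, and that smoothness and the local-finite-type property follow by descent from $\widetilde\Sht_A^\mu$. First I would recall that $\widetilde\Sht_A^\mu$ classifies tuples $(\M,y_1,\dots,y_r,\iota)$ with $\M=(\LL_1,\LL_2)\in\Pic(Y_{0S})$ and $\iota\colon\M^\Fr\simeq\M(\sum\mu_i\Gamma_{y_i})$. Since $Y_0=X\amalg X$, giving $\M$ is the same as giving a pair of line bundles on $X_S$, and giving $\iota$ amounts to a pair of Frobenius-linearizations; each of these is an honest $\Pic_X$-shtuka (i.e. a point of the moduli of rank-one shtukas on $X$ with legs on the appropriate copy of $X$). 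So $\widetilde\Sht_A^\mu$ is literally a product of two copies of the rank-one shtuka stack, which is well known to be a smooth Deligne--Mumford stack, locally of finite type over $\kk$, and a $\Pic_X(\kk)$-torsor over the space of legs. This already handles smoothness and the local-finite-type property for $\widetilde\Sht_A^\mu$.

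Next I would pass to the quotient $\Sht_A^\mu=\widetilde\Sht_A^\mu/\Pic_X(\kk)$. The twisting action of the \emph{discrete} group $\Pic_X(\kk)$ is free (twisting changes the isomorphism class of $\M$, and a nontrivial twist cannot fix a shtuka because it acts freely on $\Pic(Y_{0S})$ up to the relevant equivalence), so the quotient map $\widetilde\Sht_A^\mu\to\Sht_A^\mu$ is an \'etale $\Pic_X(\kk)$-torsor; hence $\Sht_A^\mu$ inherits smoothness and the property of being a locally-finite-type Deligne--Mumford stack. The key point then is to compute the fibers of $\pi_A\colon\Sht_A^\mu\to Y_0^r$. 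Over a fixed $r$-tuple of legs, the fiber of $\widetilde\pi_A\colon\widetilde\Sht_A^\mu\to Y_0^r$ is the set of pairs of rank-one shtukas on $X$ with those legs; the rank-one shtuka count on $X$ with prescribed legs is a torsor under $\Pic_X(\kk)$ (this is the function-field analogue of the statement that $\Sht_{\G_m}^1\to X$ is a $\Pic_X(\kk)$-torsor, cf.\ \cite[\S5.4]{YZ}), so the fiber of $\widetilde\pi_A$ is a torsor under $\Pic_X(\kk)\times\Pic_X(\kk)$. Quotienting by the diagonal $\Pic_X(\kk)$ that defines $\Sht_A^\mu$ leaves a torsor under the remaining $\Pic_X(\kk)$, which is exactly the claim that $\pi_A$ is a $\Pic_X(\kk)$-torsor.

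Finally, I would check that $\pi_A$ is representable, \'etale, and surjective onto $Y_0^r$ so that "torsor" is the correct global statement: representability and \'etaleness are inherited from the rank-one case (where $\Sht_{\G_m}^1\to X$ is finite \'etale), and the torsor structure map is the twisting action, which is visibly compatible in families. The main obstacle I anticipate is the bookkeeping in identifying the fiber of $\widetilde\pi_A$ precisely as a $\Pic_X(\kk)\times\Pic_X(\kk)$-torsor --- i.e.\ making sure the two "components" of $Y_0$ genuinely decouple the data $(\LL_1,\iota_1)$ and $(\LL_2,\iota_2)$ with no cross-constraint coming from the common divisor $\sum\mu_i\Gamma_{y_i}$ (here one uses that a leg $y_i\in Y_0$ lies on exactly one of the two copies of $X$, so the modification divisor splits into two pieces, one for each line bundle), and in tracking which $\Pic_X(\kk)$ survives the quotient. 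Everything else is formal descent along a torsor under a discrete group.
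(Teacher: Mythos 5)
Your argument is correct and is essentially the paper's own proof spelled out: the paper simply invokes the $T$-shtuka case (Yun--Zhang, Lem.\ 5.13) together with the identification $\Pic_X(\kk)\simeq(\Pic_X(\kk)\times\Pic_X(\kk))/\Delta(\Pic_X(\kk))$, which is exactly your analysis of a pair of rank-one shtukas on the two copies of $X$ (via the Lang isogeny on each copy) followed by the quotient by the diagonal $\Pic_X(\kk)$. The one caveat, shared equally by the paper's terse statement, is that over connected components of $Y_0^r$ where the signed leg divisor has nonzero degree on one copy of $X$ the fiber of $\pi_A$ is empty (the isomorphism $\iota$ cannot exist for degree reasons), so the surjectivity onto all of $Y_0^r$ that you assert fails and ``torsor'' must be read over the remaining components; this does not affect the smoothness, Deligne--Mumford, or locally-of-finite-type conclusions.
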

\begin{proof}
The proof is exactly as for $T$-shtukas \cite[Lem.\ 5.13]{YZ}, using that 
\[\Pic_X(\kk) \simeq (\Pic_X(\kk) \times \Pic_X(\kk))/\Delta(\Pic_X(\kk)).\]  
\end{proof}
For each $d \geq 0$, the open substack $\Sht_A^{\mu,\leq d} \subset \Sht_A^\mu$ consisting of those $(\mathcal{M}, (y_i) , \iota)$ with 
\[|\deg(\mathcal{M})| := |\deg(\mathcal{L}_2) - \deg(\mathcal{L}_1)| \leq d,\]
is proper over $\kk$.
In fact, each of the substacks 
\[\Sht_A^{\mu,d} =  \langle (\mathcal{M},(y_i), \iota) \colon |\deg(\mathcal{M})| = d\rangle,\]
 are themselves proper and closed. 

Push-forward of line bundles induces proper morphisms 

\[
\xymatrix{
{   \Sht_{A}^{\mu,\leq d} } \ar[dr]_{\theta_A^{\mu,\leq d}} &   &   {  \Sht_{T}^\mu  } \ar[dl]^{\theta_T^\mu}  \\
 & {  \Sht_{G}^\mu  } 
}
\]
since $\Sht_G^\mu$ is separated.  
We therefore obtain two classes 
\[[\Sht_T^\mu ], [\Sht_A^{\mu,\leq d}] \in  \Ch_{c,r} (  \Sht_{G}^\mu ),\]
by pushing forward the corresponding fundamental classes.  

\subsection{Geometric distribution}
There is an intersection pairing
\[
\langle\cdot,\cdot\rangle :  \Ch_{c,r} (  \Sht_{G}^\mu ) \times  \Ch_{c,r} (  \Sht_{G}^\mu ) \to \Q,
\]
as in  \cite[\S A.1]{YZ}.  Recall the Hecke algebra $\mathscr{H}$ of  \S \ref{ss:basic automorphic} and its action $*$ on $\Ch_{c,r} (  \Sht_{G}^\mu )$ \cite[\S 5.3]{YZ}.  Recall also \cite[\S 7]{YZ} that $\Sht_G^\mu$ can be written as the increasing union of open substacks of finite type: 
\[\Sht_G^\mu = \bigcup_{\underline d \in \mathcal{D}} \Sht_G^{\leq \underline d}.\]
Here, $\mathcal{D}$ is the set of functions $\Z/r\Z \to \Z$, which is partially-ordered by pointwise comparison.  The substack $\Sht_G^{\leq \underline d}$ parameterizes $G$-shtukas $\mathcal{E}$ such that the vector bundle $p_i(\mathcal{E})$ has index of instability less than or equal to $d(i)$, for all $i = 0,\ldots, r$.

For any $f\in\mathscr{H}$ define 
\begin{equation}\label{I distribution def}
\I_r(f) = \langle  [\Sht_{A}^{\mu,\leq d}] ,  f * [\Sht_{T}^\mu]  \rangle \in \Q,
\end{equation}
where $d$ is any integer with the property that 
\[f*[\Sht_T^\mu] \in \Ch_{c,r}(\Sht_G^\mu) = \lim_{\substack{\longrightarrow \\ d \in \mathcal{D}}}\Ch_r(\Sht_G^{\mu,\leq \underline d})\] is supported on $\Sht_G^{\mu, \leq d - r}$. In the last bit of notation, we view $d - r$ as a constant function in $\mathcal{D}$.  Note that this intersection number is independent of the choice of such $d$, since  
\[[\Sht_A^{\mu, \leq d + n}] = [\Sht_A^{\mu, \leq d}] + \sum_{i = 1}^n[\Sht_A^{\mu, d+i}]\]
for any $n \geq 0$, and $\langle [\Sht_A^{\mu, d+i}], f*[\Sht_T^\mu]\rangle = 0$. In particular, it follows that the function $\I_r(f)$ is additive, and hence defines a distribution on $\mathcal{H}$.  


\subsection{Intersections as traces}
\label{ss:geometric intersection}


Fix $d \geq 0$, and recall from $\S 2$ the morphism  
\[ \beta_i \colon N_{(i,2d-i)} \to A_d,\] 
defined for each non-negative $i \leq 2d$.  Define 
\[N_d = \coprod_{i = 0}^{2d} N_{(i,2d-i)}.\] 
Then $N_d = \widetilde N_d /\Pic_X$, where $\widetilde N_d$ is the moduli stack of triples 
\[(\M \in \Pic_Y, \LL \in \Pic_{Y_0}, \phi \colon \nu_*\LL \to \nu_{0*} \M)\] such that  $\phi$ has determinant of degree $d$.      
Write $\beta \colon N_d \to A_d$ for the union of the $\beta_i$.  

Let $D$ be an effective divisor of degree $d$.  To relate $\I_r(f_D)$ to the local system $\beta_*\Q_\ell$, we define the Yun-Zhang  correspondence 
\begin{equation}\label{hecke one paw}
\xymatrix{
& { \YZ_d }  \ar[dr]^{\gamma_1} \ar[dl]_{\gamma_0} \\
{  N_d  }  \ar[dr]_{ \beta }  & &  {   N_d    }  \ar[dl]^{  \beta } \\
& { A_d } 
}
\end{equation}
as follows.  Let $\widetilde{\YZ}_{d}$ be the stack whose $S$-points is the groupoid of 
\begin{itemize}
\item 
pairs of points $(\mathcal{M}_0,\mathcal{L}_0,\phi_0)$ and  $(\mathcal{M}_1,\mathcal{L}_1,\phi_1)$ in $\widetilde N_d(S)$,
\item
one  $S$-point 
$(y_0,y_1)$ of  $Y_{0S}\times_{X_S} Y_{S}$,
\item
and injective line bundle maps  $s_0 : \mathcal{M}_0  \to  \mathcal{M}_1$ and $s_1 : \mathcal{L}_0  \to  \mathcal{L}_1$.
\end{itemize}
The cokernels of $s_i$ are required to be invertible sheaves on the graphs of $y_i$.  Moreover,  we require that the diagram
\[
\xymatrix{
{   \nu_{*} \mathcal{L}_0 } \ar[r]^{s_1}     \ar[d]_{\phi_0}  &  {   \nu_{*} \mathcal{L}_1 }   \ar[d]^{\phi_1}     \\
  {   \nu_{0*} \mathcal{M}_0  }  \ar[r]_{ s_0}    &   {   \nu_{0*} \mathcal{M}_1  } 
}
\]
of $\co_{X_S}$-modules commutes.  Then $\Pic_X$ acts on $\widetilde{\YZ}_{d}$ by simultaneously twisting, and we define $\YZ_d = \widetilde{\YZ}_{d}/\Pic_X$.

Let $W = Y \coprod Y$.  Also let $\Sigma_{2d}(W)$ be the moduli stack of pairs $(\mathcal{K}, \bm{a})$, consisting of a line bundle $\mathcal{K}$ of degree $2d$ on $W$, together with a global section $\bm{a}$.  Recalling the spaces $\Sigma_{i,j}(Y)$ introduced in \S2, we have 
\[\Sigma_{2d}(W) = \coprod_{i = 0}^{2d} \Sigma_{i, 2d-i}(Y).\]
Using the top horizontal arrow of (\ref{fundamental 2}),  we may realize $\YZ_d$  as the pullback of a correspondence on $\Sigma_{2d}(W)$.  For this, define the following automorphisns $\tau_i$ of $W$ over $X$:
\begin{enumerate}
\item $\tau_1$ is $\sigma$ on $Y \coprod \varnothing$ and the identity on $\varnothing \coprod Y$. 
\item $\tau_2$ interchanges the copies.
\item $\tau_3 = \tau_2 \circ \tau_1$, which sends $y \cup \varnothing \mapsto \varnothing \cup y^\sigma$ and $\varnothing \cup y \mapsto y \cup \varnothing$.
\end{enumerate}
The first two are involutions, while $\tau_3$ has order 4.

Then define the correspondence 
\begin{equation}\label{simple hecke}
\xymatrix{
& { H_d (W) }  \ar[dr] \ar[dl] \\
{  \Sigma_{2d}(W)  }  \ar[dr]_{ \otimes}  & &  {   \Sigma_{2d}(W)  }  \ar[dl]^{  \otimes} \\
& { \Sigma_{2d}(Y) ,} 
}
\end{equation}
where, for any $\kk$-scheme $S$,  $H_{d} (W)(S)$ is the groupoid of: 
\begin{itemize}
\item
a pair of $S$-points $(\mathcal{K}_i, \bm{a}_i) \in \Sigma_{2d}(W)$,  for $i = 0,1$,
\item
 an $S$-point $y\in W(S)$, 
 \item
  an isomorphism
\[
s : \mathcal{K}_0 ( y^{\tau_1} - y^{\tau_3} ) \iso \mathcal{K}_1 
\]
of line bundles on $W_S$ such that  $s(\bm{a}_0) = \bm{a}_1$, where 
we view $\bm{a}_0$ as a rational section of  $\mathcal{K}_0 ( y^{\tau_1} - y^{\tau_3} )$.  
\end{itemize}

This data is determined by $(\mathcal{K}_0 ,\bm{a}_0)$ and the point $y\in W(S)$, for from these we may recover the line bundle $\mathcal{K}_1 = \mathcal{K}_0 ( y^{\tau_1} - y^{\tau_3} )$ and its rational section $\bm{a}_1=\bm{a}_0$.  The condition that  $\bm{a}_1$ is a global section of $\mathcal{K}_1$, as opposed to merely a rational section, is equivalent to 
\[
\mathrm{div}(\bm{a}_0 ) + y^{\tau_1} - y^{\tau_3} \ge 0.
\]
This is in turn equivalent to the condition that the effective Cartier divisor $y^{\tau_3}$ appears in the support of $\mathrm{div}(\bm{a}_0)$.
In other words,  we may realize
\[
H_{d}(Y) \hookrightarrow \Sigma_{2d}(W) \times_k W
\]
as the closed subscheme of triples $(\mathcal{K}_0 , \bm{a}_0  , y )$ for which $y^{\tau_3}$ appears in the support of $\mathrm{div}(\bm{a}_0)$.
\begin{remark}\label{no preserve}
$H_d(W)$ does not preserve the substacks $\Sigma_{i, 2d-i}(Y) \subset \Sigma_{2d}(W)$.  In fact, it induces a correspondence from $\Sigma_{i, 2d-i}(Y)$ to 
\[\Sigma_{i + 1,2d- i - 1}(Y) \coprod \Sigma_{i-1,2d-i+1}(Y).\]
Accordingly, the correspondence $\YZ_d$ does not preserve $N_{(i,2d-i)} \subset N_d$. 
\end{remark}

\begin{proposition}\label{prop:rep one paw}
The diagram (\ref{hecke one paw}) is canonically identified with
\[
\xymatrix{
& { A_d \times_{  \Sigma_{2d}(Y) }   H_d (W) }  \ar[dr] \ar[dl] \\
{  A_d \times_{  \Sigma_{2d}(Y) }  \Sigma_{2d}(W)  }  \ar[dr]  & &  { A_d \times_{  \Sigma_{2d}(Y) }  \Sigma_{2d}(W)    }  \ar[dl]\\
& {  A_d }, 
}
\]
obtained from (\ref{simple hecke}) by base change along the map $A_d \to  \Sigma_{2d}(Y)$  in  (\ref{fundamental 2}).
\end{proposition}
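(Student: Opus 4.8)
The plan is to realize both correspondences in the statement as base changes along $\nu^\sharp\colon A_d\to\Sigma_{2d}(Y)$ of \emph{one and the same} correspondence $(\ref{simple hecke})$ over $\Sigma_{2d}(W)$. First, since the square in $(\ref{fundamental 2})$ is cartesian, taking the disjoint union over $0\le i\le 2d$ gives a canonical isomorphism $N_d\iso A_d\times_{\Sigma_{2d}(Y)}\Sigma_{2d}(W)$, under which $\beta$ becomes the first projection and $\delta$ (the union of the horizontal maps in $(\ref{fundamental 2})$) becomes the second. Thus the two outer vertices of the target diagram are canonically $N_d$, and by associativity of fibre products its top vertex becomes $N_d\times_{\Sigma_{2d}(W)}H_d(W)$, the fibre product being formed via $\delta$ on the left and the left leg of $(\ref{simple hecke})$ on the right. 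Hence it suffices to construct a canonical isomorphism
\[
\YZ_d\;\iso\; N_d\times_{\Sigma_{2d}(W)}H_d(W)
\]
over $N_d$ that identifies $\gamma_0$ with the first projection and $\gamma_1$ with the composite of the second projection, the right leg of $(\ref{simple hecke})$, and the identification $N_d\iso A_d\times_{\Sigma_{2d}(Y)}\Sigma_{2d}(W)$. Since every stack here is a quotient by the free simultaneous-twisting action of $\Pic_X$, which is trivial on $A_d$ and on the $\Sigma$'s, it is enough to work with groupoids of $S$-points before passing to the quotient, $S$ any $\kk$-scheme.

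So fix $(\M_0,\LL_0,\phi_0)\in\widetilde{N}_d(S)$ and write $(\mathcal K_0,\bm a_0):=\delta(\M_0,\LL_0,\phi_0)$, where $\bm a_0$ is the pair of divisors of the two diagonal entries $\bm a,\bm d$ of $\nu^*\phi_0\colon\LL_0\oplus\LL_0^\sigma\to\nu^*\M_0'\oplus\nu^*\M_0''$, living on the two copies $Y'$ (indexing $\M'$) and $Y''$ (indexing $\M''$) of $Y$ inside $W$. By the explicit description of $H_d(W)$ above, an $S$-point of $N_d\times_{\Sigma_{2d}(W)}H_d(W)$ over $(\M_0,\LL_0,\phi_0)$ is the same as a point $y\in W(S)$ with $y^{\tau_3}$ in the support of $\mathrm{div}(\bm a_0)$. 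On the other side, the natural identification $Y_0\times_X Y=(X\coprod X)\times_X Y=W$ shows that the leg of a point of $\widetilde{\YZ}_d(S)$ lying over $(\M_0,\LL_0,\phi_0)$ is again a point $y=(y_0,y_1)\in W(S)$; and since $\M_0$ and $\LL_0$ have rank one, the remaining data $(\M_1,\LL_1,\phi_1,s_0,s_1)$ is \emph{uniquely and canonically determined} by $y$: $s_1$ must be the inclusion $\LL_0\hookrightarrow\LL_1:=\LL_0(\Gamma_{y_1})$, $s_0$ the inclusion $\M_0\hookrightarrow\M_1:=\M_0(\Gamma_{y_0})$ (an isomorphism on the component of $Y_0$ not meeting $y_0$), and $\phi_1:=\nu_{0*}(s_0)\circ\phi_0\circ(\nu_*s_1)^{-1}$, which a priori is only a rational map.

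It remains to determine exactly when $\phi_1$ is a genuine morphism — equivalently, a point of $\widetilde{N}_d$ — and to match the right legs. Pulling back along the étale cover $\nu$, so that $\nu^*\Gamma_{\nu(y_1)}=\Gamma_{y_1}+\Gamma_{y_1^\sigma}$, and using that $\sigma$-equivariance of $\nu^*\phi_1$ forces its off-diagonal entries to be the $\sigma$-conjugates of its diagonal ones, one computes $\nu^*\phi_1$ directly. Say $y_0$ lies in the component of $Y_0$ indexing $\M'$, so $\M_1''=\M_0''$ and $y$ lies in the copy $Y'\subset W$; then $\mathrm{div}(\bm a_1)=\mathrm{div}(\bm a_0)+y_1^\sigma\ge0$ automatically, whereas $\mathrm{div}(\bm d_1)=\mathrm{div}(\bm d_0)-y_1^\sigma$, so $\phi_1$ is a morphism if and only if $y_1^\sigma$ — which is precisely $y^{\tau_3}$ — lies in the support of $\mathrm{div}(\bm d_0)$, i.e.\ of $\mathrm{div}(\bm a_0)$. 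When $y_0$ lies in the $\M''$-component the computation is symmetric, with the roles of $Y'$ and $Y''$ swapped; this swap is exactly the asymmetry built into $\tau_1$ ($=\sigma$ on $Y'$, the identity on $Y''$) and $\tau_3=\tau_2\circ\tau_1$ (compare Remark~\ref{no preserve}). In either case the same computation gives $\delta(\M_1,\LL_1,\phi_1)=\bigl(\mathcal K_0(y^{\tau_1}-y^{\tau_3}),\,\bm a_0\bigr)$, and $\beta(\M_1,\LL_1,\phi_1)=\beta(\M_0,\LL_0,\phi_0)$, since elementary modifications of this shape are $\beta$-invariant — a short check using $\Nm_{Y/X}(\Gamma_{y_1})=\Gamma_{\nu(y_1)}$. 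Therefore the point of $N_d$ carried by $(\M_1,\LL_1,\phi_1)$ is exactly the right leg of the $H_d(W)$-point attached to $(\mathcal K_0,\bm a_0,y)$, so the assignment $\bigl((\M_0,\LL_0,\phi_0),\,y\bigr)\mapsto$ the corresponding $\YZ_d$-point is the desired isomorphism; it is manifestly $\Pic_X$-equivariant and so descends to the quotients, completing the proof. The only real content is the divisor bookkeeping of this last paragraph — tracking which copy of $Y$ in $W$ carries each diagonal entry of $\nu^*\phi$, and the $\sigma$-twist relating the $(1,1)$- and $(2,2)$-entries — which is precisely what the automorphisms $\tau_i$ are engineered to record.
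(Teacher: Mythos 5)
Your proof is correct and follows essentially the same route as the paper: identify $N_d$ with $A_d \times_{\Sigma_{2d}(Y)} \Sigma_{2d}(W)$ via the cartesian square in (\ref{fundamental 2}), then compare $S$-points of $\widetilde{\YZ}_d$ with the base-changed correspondence through exactly the divisor bookkeeping with $\tau_1,\tau_3$ (including the $\sigma$-conjugacy of the off-diagonal entries and the $\Nm_{Y/X}$ check that $\beta$ is preserved). The only organizational difference is that you realize $\widetilde{\YZ}_d$ inside $\widetilde N_d \times_\kk W$ by the uniqueness of $(\M_1,\LL_1,\phi_1,s_0,s_1)$ given the leg, so both sides are parameterized by the same data over $N_d$, whereas the paper constructs mutually inverse maps, the backward one using a descent along $\nu$ to produce $\M_0''(y_0)\simeq\M_1''$; your route bypasses that descent step but is otherwise the same computation.
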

\begin{proof}
It is enough to show that $\YZ_d \cong A_d \times_{  \Sigma_{2d}(Y) }   H_d (W)$.  We will use the construction in the proof of Proposition \cite[3.12]{HS}. 
Define a map 
\[
\YZ_d \to A_d \times_{  \Sigma_{2d}(Y) }   H_d (W)
\]
  as follows.  Given
\[((\mathcal{M}_{0},\mathcal{L}_0,\phi_0),( \mathcal{M}_1,\mathcal{L}_1, \phi_1), (y_0,y_1), s_0, s_1) \in \widetilde\YZ_d(S),\] we obtain from the first two pieces of data, points $(\mathcal{K}_0 , \bm{a}_0 )$ and $(\mathcal{K}_1 , \bm{a}_1 )$ of $ \Sigma_{2d}(W)(S)$.  
We have 
\[\mathcal{K}_i \cong \underline\Hom((\mathcal{L}_i, \mathcal{L}_i^\sigma), \mathcal{M}_i |_{W_S}).\]
Let $y \in W(S)$ correspond to the point $(y_0,y_1) \in (Y_0 \times_X Y)(S)$.  Then the isomorphisms $ \mathcal{L}_0(y_1) \cong \mathcal{L}_1$ and $ \mathcal{M}_0(y_0) \cong \mathcal{M}_1$ induce an isomorphism $ \mathcal{K}_0(y^{\tau_1} - y^{\tau_3})\cong \mathcal{K}_1$ sending $\bm{a}_0$ to $\bm{a}_1$.  This gives a point in $H_d(W)(S)$.  Since $(\mathcal{M}_{0},\mathcal{L}_0,\phi_0)$ and $( \mathcal{M}_1,\mathcal{L}_1, \phi_1)$ lie over the same point in $A_d$, we obtain a map 
\[
\widetilde\YZ_d \to A_d \times_{  \Sigma_{2d}(Y) }   H_d (W),
\]
 which factors through $\YZ_d$.  

Next, we construct a map in the other direction.  Suppose given an $S$-point 
\[(\Delta, \xi, \mathcal{K}_0, \bm{a}_0,\mathcal{K}_1, \bm{a}_1,y,s) \in A_d \times_{  \Sigma_{2d}(Y) } H_d (W).\]
By (\ref{fundamental 2}), we have points $(\mathcal{M}_i, \mathcal{L}_i, \phi_i) \in N_{d}(S)$ corresponding to $(\Delta, \xi, \mathcal{K}_i, \bm{a}_i)$, for $i = 0,1$.  Let's show that there are isomorphisms $\mathcal{M}_1\cong \mathcal{M}_0(y_0)$ and $\mathcal{L}_1 \cong \mathcal{L}_0(y_1)$ inducing the given isomorphism 
\[
s : \mathcal{K}_0 ( y^{\tau_1} - y^{\tau_3} ) \iso \mathcal{K}_1.
\]

Suppose first that $y \in \varnothing \coprod Y$.  Since we work modulo twisting, we may assume $\M_i = (\co_X, \M_i'')$.  If we write $\mathcal{K}_i = (\mathcal{K}_i', \mathcal{K}_i'')$, then $\LL_i = (\mathcal{K}_i')^{-1}$.  By the proof of \cite[3.12]{HS}, $\M_i''$ is a canonical descent of the line bundle $\mathcal{K}_i'' \otimes \LL_i^\sigma$ on $Y$, down to $X$.  Now, the isomorphism $\mathcal{K}_0(y^{\tau_1} - y^{\tau_3}) \simeq \mathcal{K}_1$ amounts to a pair of isomorphisms 
\begin{equation}\label{isoms}
\mathcal{K}_0'(-y_1) \simeq \mathcal{K}_1' \hspace{4mm} \mbox{and} \hspace{4mm} \mathcal{K}_0''(y_1) \simeq \mathcal{K}_1''.
\end{equation}
We deduce from this an isomorphism $\LL_0(y_1) \simeq \LL_1$.  To finish, we must produce an isomorphism $\M_0''(y_0) \simeq \M_1''$.  Now, (\ref{isoms}) gives us an isomorphism $\nu^*(\M_0''(y_0)) \simeq \nu^*\M_1''$, and the descent data defining $\M_0''$ and $\M_1''$ allows us to descend this to the desired isomorphism $\M_0''(y_0) \simeq \M_1''$.  The case $y \in Y \coprod \varnothing$ is proved similarly.   
\end{proof}

\begin{corollary}\label{prop:hk_d facts}
The stack $\YZ_d$ is a scheme, and $\gamma_0, \gamma_1 \colon \YZ_d \to N_{d}$ are finite and surjective. 
Hence, by Proposition $\ref{prop:Nsmooth}$,  if $d \geq 2g_1 - 1$ then $\dim \YZ_d = 2d - g + 1$.  
\end{corollary}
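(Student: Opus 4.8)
The plan is to deduce everything formally from Proposition~\ref{prop:rep one paw} and the explicit presentation of $H_d(W)$ as a closed subscheme of $\Sigma_{2d}(W)\times_\kk W$ given just before the statement. By that proposition $\YZ_d\cong A_d\times_{\Sigma_{2d}(Y)}H_d(W)$, while the cartesian square in (\ref{fundamental 2}) gives $N_d=A_d\times_{\Sigma_{2d}(Y)}\Sigma_{2d}(W)$; under these identifications $\gamma_0$ and $\gamma_1$ are precisely the maps obtained from the two legs $\mathrm{pr}_0,\mathrm{pr}_1\colon H_d(W)\to\Sigma_{2d}(W)$ of (\ref{simple hecke}) by base change along the projection $N_d\to\Sigma_{2d}(W)$, since $H_d(W)\times_{\Sigma_{2d}(W)}N_d=H_d(W)\times_{\Sigma_{2d}(Y)}A_d$ by associativity of fibre products. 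So I would first reduce the three assertions (scheme, finite, surjective) to the corresponding assertions for $\mathrm{pr}_0$ and $\mathrm{pr}_1$, all three properties being stable under base change. The ``scheme'' part is then immediate: $A_d$ is a quasi-projective $\kk$-scheme (\S\ref{s:analytic}) and $H_d(W)$ is a scheme, being closed in $\Sigma_{2d}(W)\times_\kk W$, so the fibre product $\YZ_d$ is a scheme.

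Next I would check that $\mathrm{pr}_0,\mathrm{pr}_1\colon H_d(W)\to\Sigma_{2d}(W)$ are finite. Each $\Sigma_i(Y)\cong\mathrm{Sym}^i(Y)$ is projective over $\kk$ (\S\ref{s:analytic}), hence $\Sigma_{2d}(W)=\coprod_i\Sigma_{i,2d-i}(Y)$ is projective over $\kk$, and therefore $H_d(W)$, as a closed subscheme of the projective $\kk$-scheme $\Sigma_{2d}(W)\times_\kk W$, is itself projective over $\kk$; any $\kk$-morphism out of a projective $\kk$-scheme is proper, so $\mathrm{pr}_0$ and $\mathrm{pr}_1$ are proper. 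For quasi-finiteness I would use the closed embedding $H_d(W)\hookrightarrow\Sigma_{2d}(W)\times_\kk W$: the fibre of $\mathrm{pr}_0$ over a point $(\mathcal{K},\bm a)$ is the closed subscheme of $W$ cut out by the requirement that $y^{\tau_3}$ lie in the support of $\mathrm{div}(\bm a)$ (and likewise $y^{\tau_1}$ for $\mathrm{pr}_1$), which is finite because $\mathrm{div}(\bm a)$ is an effective divisor of degree $2d$ on the curve $W$ and so has finite support. A proper quasi-finite morphism is finite. Surjectivity is equally concrete: for $d\ge 1$ the divisor $\mathrm{div}(\bm a)$ is nonzero, so picking a point $z$ in its support and setting $y:=z^{\tau_3^{-1}}$ (resp. $y:=z^{\tau_1}$) produces a preimage of $(\mathcal{K},\bm a)$ in $H_d(W)$. (For $d=0$ both $\YZ_0$ and the relevant fibres are empty and the statement plays no role, since the applications use $d\gg 0$.) Finally, finite surjective morphisms preserve dimension, so combining the above with Proposition~\ref{prop:Nsmooth} yields $\dim\YZ_d=\dim N_d=2d-g+1$ whenever $d\ge 2g_1-1$.

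The only step requiring a moment's thought is the fibre computation that establishes quasi-finiteness of $\mathrm{pr}_0$ and $\mathrm{pr}_1$; everything else is bookkeeping with base change. In effect there is no serious obstacle here precisely because the substantive content — the modular identification of the Yun--Zhang correspondence — has already been carried out in Proposition~\ref{prop:rep one paw}, and the Hecke correspondence $H_d(W)$ on $\Sigma_{2d}(W)$ has been written down explicitly enough that its geometry can be read off directly.
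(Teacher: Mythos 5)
Your argument is correct and is exactly the route the paper intends: the corollary is stated with no separate proof precisely because, after Proposition \ref{prop:rep one paw}, everything reduces by base change to the legs of $H_d(W)\to\Sigma_{2d}(W)$, whose finiteness and surjectivity are read off from the closed embedding $H_d(W)\hookrightarrow\Sigma_{2d}(W)\times_\kk W$ just as you do (your fibre computation and the dimension count via finite surjectivity are the details the paper leaves implicit). Your aside about the degenerate case $d=0$ is a fair observation but irrelevant in the range $d\ge 2g_1-1$ actually used.
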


The correspondence  $\YZ_d$ induces an endomorphism 
\[
[\YZ_d] :  \beta_*\Q_\ell \to \beta_*\Q_\ell
\] 
of sheaves on $A_d$, given by the composition
\[\beta_*\Q_\ell \to \beta_*\gamma_{0*}\gamma_0^*\Q_\ell \simeq \beta_*\gamma_{0*}\Q_\ell \simeq \beta_*\gamma_{1*}\Q_\ell \to \beta_*\Q_\ell.\]
The first and last maps are induced by adjunction, using that $\gamma_0$ and $\gamma_1$ are finite.  Denote by $[\YZ_d]^r$ the $r$-fold composition of  this endomorphism with itself.  

 \begin{proposition}\label{prop:geometric trace}
 Fix an effective divisor $D\in \Div(X)$ of degree $d \geq 2g_1 - 1$, and recall the closed subscheme $A_D \subset A_d$ and the inclusion $A_D(\kk) \subset K$ of $(\ref{invariant domain})$.  The distribution $\I_r$ (\ref{I distribution def}) satisfies
 \[
 \I_r(f_D) = \sum_{ \substack{  \xi \in K  \\  \Tr_{K/F}(\xi)=1   } }\I_r(\xi , f_D),
 \]
 where 
\[
\I_r(\xi , f_D)= 
\begin{cases}
\Tr \big(      [\YZ_d]_{\bar{\xi}} ^r  \circ \mathrm{Frob}_\xi   ;\,   ( \beta_* \Q_\ell)_{\bar{\xi}}  \big)  
& \mbox{if } \xi\in A_D(\kk)\\
0 & \mbox{otherwise.}
\end{cases}
\]
Here $\bar{\xi}$ is any geometric point above $\xi:\Spec(\kk) \to A_D$.
 \end{proposition}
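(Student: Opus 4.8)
The plan is to mirror the strategy already used for Theorem \ref{prop:geometric orbital}, which handles the case $r=0$, and then account for the $r$ modifications by inserting $r$ copies of the Yun–Zhang correspondence. First I would recall from \cite{YZ, HS} the decomposition of the self-intersection number $\I_r(f_D) = \langle [\Sht_A^{\mu,\leq d}], f_D * [\Sht_T^\mu]\rangle$ as a sum over double cosets in $A(F)\backslash J(F)/T(F)$, equivalently over $\xi \in K$ with $\Tr_{K/F}(\xi) = 1$, by decomposing the Hecke correspondence $f_D$ and the fiber products of the relevant moduli stacks according to the invariant map \eqref{orbit invariant}. This gives the first displayed equality $\I_r(f_D) = \sum_\xi \I_r(\xi, f_D)$ and reduces everything to computing each $\I_r(\xi, f_D)$ separately. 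For $\xi \notin A_D(\kk)$ the relevant moduli space is empty — exactly as in Theorem \ref{prop:geometric orbital}(1), since the condition $\divis(\xi) + \nu^*D \geq 0$ from \eqref{invariant domain} is precisely the non-emptiness condition — so $\I_r(\xi, f_D) = 0$ in that case.

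For $\xi \in A_D(\kk)$, the main step is to identify the fiber product of moduli stacks computing $\I_r(\xi,f_D)$ with a space built from $N_d$, $\YZ_d$, and Frobenius, and then to apply the Lefschetz trace formula. Concretely, following \cite[\S 3]{HS} and \cite[\S 6--7]{YZ}, the intersection $\langle [\Sht_A^{\mu,\leq d}], f_D * [\Sht_T^\mu]\rangle$ is computed on a stack fibered over the Hitchin base $A_d$, where the $A$-shtuka and $T$-shtuka structures, together with the $r$ legs of type $\mu$ and the Hecke modification $f_D$, translate respectively into: a point of $N_d = \coprod_i N_{(i,2d-i)}$ (the Hitchin-type moduli stack of maps $\nu_*\LL \to \nu_{0*}\M$), the Frobenius $\Frob_\xi$ acting on the fiber $(\beta_*\Q_\ell)_{\bar\xi}$ (encoding the shtuka/Frobenius-twist condition), and $r$ applications of the correspondence $\YZ_d$ of \eqref{hecke one paw} (one for each modification, the sign $\mu_i = \pm 1$ determining whether one uses $\gamma_0 \to \gamma_1$ or $\gamma_1 \to \gamma_0$, though by the parity condition $\sum \mu_i = 0$ and the symmetry of the correspondence these compose to $[\YZ_d]^r$). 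Since $d \geq 2g_1 - 1$, Proposition \ref{prop:Nsmooth} guarantees $N_{(i,2d-i)}$ is smooth of the expected dimension and Corollary \ref{prop:hk_d facts} gives finiteness of $\gamma_0, \gamma_1$, so the correspondence $[\YZ_d] \colon \beta_*\Q_\ell \to \beta_*\Q_\ell$ is well-defined and the cohomological formalism goes through; the Grothendieck–Lefschetz trace formula then yields $\I_r(\xi, f_D) = \Tr([\YZ_d]^r_{\bar\xi} \circ \Frob_\xi; (\beta_*\Q_\ell)_{\bar\xi})$.

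The main obstacle I anticipate is the bookkeeping needed to verify that the geometric Hecke modifications defining $\Sht_A^{\mu,\leq d}$ and $\Sht_T^\mu$, when intersected inside $\Sht_G^\mu$ and pushed to the Hitchin base, match up \emph{exactly} with $r$ copies of the Yun–Zhang correspondence $\YZ_d$ — including getting the automorphisms $\tau_1, \tau_2, \tau_3$ of $W = Y \coprod Y$ correct and confirming (as in Remark \ref{no preserve}) that the index $i$ shifts appropriately, so that the total operator is $[\YZ_d]^r$ and not some twisted variant. This is where the split nature of $A$ (versus the étale double cover $Y$ for $T$) enters, and one must check that Proposition \ref{prop:rep one paw}, which identifies $\YZ_d$ with the base change of the explicit correspondence $H_d(W)$ on $\Sigma_{2d}(W)$, is compatible with the shtuka-theoretic picture. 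Once this identification is in place, the trace formula application is formal. I would present the argument by citing \cite[3.17]{HS} for the $r = 0$ skeleton and explaining in detail only the insertion of the $r$ Hecke correspondences and their identification with $\YZ_d$, since the rest of the cohomological and intersection-theoretic machinery is identical to \emph{loc.\ cit.}
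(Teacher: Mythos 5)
Your overall strategy is the right one, and it is essentially the paper's: the paper's proof of this proposition is itself a reduction to the argument of \cite[4.7]{HS} (orbit decomposition over $\xi$ with $\Tr_{K/F}(\xi)=1$, identification of the fixed-point stack over the Hitchin base $A_d$, insertion of one copy of the correspondence per leg, Lefschetz trace formula), and the matching of the $r$ modifications with $[\YZ_d]^r$ that you single out as the main obstacle is exactly what Proposition \ref{prop:rep one paw} was set up to provide, so that part of your worry is already discharged before this proposition is invoked.

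However, you miss the one point that is genuinely new when $T$ is replaced by the split torus $A$, and which the paper explicitly flags: the analogue of \cite[Lemma 6.11(1)]{YZ}, namely that the map $\Hk_A^\mu \to \Hk_G^\mu(y)$ is still a regular local immersion. In the Heegner--Drinfeld setting this is proved using that the target is Deligne--Mumford in a neighborhood of the image, but for $A$ the image consists of bundles split as sums of line bundles, whose extra automorphisms make the target fail to be Deligne--Mumford near the image; moreover both tangent complexes now have $1$-dimensional $H^0$. Your claim that ``the rest of the cohomological and intersection-theoretic machinery is identical to \emph{loc.\ cit.}'' silently assumes this step goes through unchanged, which it does not: without the regular local immersion one cannot justify the Gysin/refined-intersection computation that localizes $\I_r(\xi,f_D)$ to a trace on the stalk $(\beta_*\Q_\ell)_{\bar\xi}$. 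The fix (as in the paper) is a tangent-complex computation showing the induced map on $H^0$ is an isomorphism and on $H^1$ an injection, which suffices for a regular local immersion of algebraic stacks even without the Deligne--Mumford hypothesis; your write-up should include this verification rather than cite the $T$-case argument verbatim.
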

 
 \begin{proof}
 The proof is similar to the proof of \cite[4.7]{HS}, so we omit it.  One slight difference is the analogue of \cite[Lemma 6.11(1)]{YZ}.  Specifically, we must show that the map $ \pi \colon \mathrm{Hk}_T^\mu \to \mathrm{Hk}_{G}^\mu(y)$ discussed there remains a regular local immersion if we replace $\mathrm{Hk}_T^\mu$ with $\mathrm{Hk}_A^\mu$.  The tangent complex computations are similar except now both tangent complexes have 1-dimensional $H^0$.  In particular, it is no longer true that $\mathrm{Hk}_{G}^\mu(y)$ is Deligne-Mumford in the neighborhood of a point in the image of $\pi$.  Nevertheless, the induced map on $H^0$ (resp.\ $H^1$) is an isomorphism (resp.\ injection), which is enough to conclude that the map is indeed a regular local immersion of algebraic stacks.  
 \end{proof}

\section{Comparison of traces}

\subsection{Representations}
For now, $\ell$ is any prime and all representations are over $\Q_\ell$.  

Let $n \geq 1$ be an even integer.  For each $0 \leq i \leq n$, view $S_i \times S_{n-i}$ as the subgroup of $S_n$ preserving the subset $\{1,\ldots, i\}$.  Let $\bm{1}$ denote the trivial representation $\Q_\ell$.  Then the induced representation 
\[V_i = \mathrm{Ind}_{S_i \times S_{n-i}}^{S_{n}} \bm{1}\] is the permutation representation for the $S_{n}$-action on the ways of dividing the integers $\{1,\ldots, n\}$ into two bins of size $i$ and $n-i$.  

Let $V = \bigoplus_{i = 0}^{n} V_i$.  By the combinatorial description of $V_i$, we see that 
\[V \simeq (\Q_\ell^2)^{\otimes n},\]  where the action of $S_{n}$ on $(\Q_\ell^2)^{\otimes n}$ is by permuting coordinates.

Recall that the irreducible representations $\rho_\lambda$ of $S_n$ are indexed by partitions $\lambda \vdash n$. Write $\lambda  = [\lambda_1,\ldots,\lambda_k]$ with  $\lambda_1 \geq \lambda_2 \geq \cdots \geq \lambda_k \geq 1$ and $\sum_i \lambda_i = n$.  We will also interpret the symbol $[n,0]$ to mean $[n]$.  Write 
\[V = \bigoplus_{\lambda \vdash n} V_\lambda,\]
\[V_i = \bigoplus_{\lambda \vdash n} V_{i,\lambda}\] 
where $V_\lambda$ and $V_{i,\lambda}$ are the $\rho_\lambda$-isotypic components of $V$ and $V_i$, respectively. 

\begin{proposition}\label{LRrule}
If $\lambda \vdash n$,  then  
\begin{enumerate}[$(a)$]
\item $V_{i,\lambda} \neq 0$ if and only if $\lambda = [k,n-k]$ with $n-k\leq i \leq k$;
\item if $V_{i,\lambda} \neq 0$, then it has multiplicity $1$, i.e.\ $V_{i,\lambda} \simeq \rho_\lambda$.
\end{enumerate}   
\end{proposition}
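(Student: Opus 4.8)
The plan is to identify the decomposition of $V_i = \mathrm{Ind}_{S_i \times S_{n-i}}^{S_n} \bm{1}$ using classical symmetric function theory. The representation $V_i$ is, by definition, the permutation module $M^{(i,n-i)}$ (or $M^{(n-i,i)}$) associated to the composition of $n$ into parts $i$ and $n-i$. Its decomposition into irreducibles $\rho_\lambda$ is governed by Young's rule (equivalently, by the Pieri rule / Kostka numbers): $M^{(i,n-i)} \simeq \bigoplus_{\lambda} K_{\lambda,(i,n-i)}\,\rho_\lambda$, where $K_{\lambda,\mu}$ is the Kostka number counting semistandard Young tableaux of shape $\lambda$ and content $\mu$. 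Here I would assume WLOG $i \geq n-i$ (the module only depends on the multiset $\{i, n-i\}$). Since the content $(i, n-i)$ has only two parts, $K_{\lambda,(i,n-i)} = 0$ unless $\lambda$ has at most two rows, and for a two-row shape $\lambda = [k, n-k]$ with $k \geq n-k$, the Kostka number $K_{[k,n-k],(i,n-i)}$ equals $1$ if $n-k \leq \min(i,n-i)$ and $0$ otherwise — there is exactly one semistandard filling, with the $1$'s occupying the first $n-i$ cells of each row as far as possible. Both statements $(a)$ and $(b)$ then follow immediately: when $i \geq n-i$, the condition $n-k \leq \min(i,n-i) = n-i$ together with $k \geq n-k$ (needed for $[k,n-k]$ to be a partition, equivalently $k \geq n/2$) is exactly $n - i \leq k$, while $k \leq n$ is automatic; combined with $k \geq i$ being forced once we also require... actually the cleaner bookkeeping is: $K_{[k,n-k],(i,n-i)} = 1 \iff \max(i,n-i) \leq k \leq n$, and since $\max(i,n-i)\le n$ always, this is just $k \ge \max(i,n-i)$, which after the symmetrization $i \leftrightarrow n-i$ and relabeling becomes the stated range $n-k \le i \le k$.

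Concretely, the steps I would carry out are: (1) recall that $V_i = M^{(i,n-i)}$ as an $S_n$-module and that it is independent of ordering of the two parts; (2) quote Young's rule to write $V_i \simeq \bigoplus_{\lambda \vdash n} K_{\lambda,(i,n-i)} \rho_\lambda$; (3) observe that a Kostka number with two-part content vanishes unless $\lambda$ has $\leq 2$ rows, so $\lambda = [n]$ or $\lambda = [k,n-k]$ with $n/2 \leq k \leq n-1$ (and we interpret $[n,0] = [n]$ as in the statement); (4) for such $\lambda$, compute $K_{[k,n-k],(i,n-i)}$ directly: a semistandard tableau of shape $[k,n-k]$ and content $(a,b)$ with $a+b=n$ has its second row forced to consist entirely of $2$'s (since columns must strictly increase and entries lie in $\{1,2\}$), so the number of $2$'s is at least $n-k$; the remaining $2$'s ($b-(n-k)$ of them) fill the right end of the first row, and this is possible and unique iff $0 \leq b - (n-k)$ and the leftover $1$'s fit, i.e. iff $n-k \leq b = n-i$ when $b \le a$, giving $K = 1$ precisely when $n-k \le \min(i,n-i)$ and $K=0$ otherwise. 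This yields multiplicity exactly $1$ whenever nonzero, proving $(b)$, and the nonvanishing range $n-k \le \min(i,n-i)$, which is symmetric in $i \leftrightarrow n-i$ and is equivalent to $n-k \le i \le k$, proving $(a)$.

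Alternatively — and this may be the slicker route to splice in — one can bypass Kostka numbers entirely using the already-established isomorphism $V \simeq (\Q_\ell^2)^{\otimes n}$ of $S_n$-representations together with $\mathfrak{sl}_2$-theory: as an $\mathfrak{sl}_2 \times S_n$-bimodule, $(\Q_\ell^2)^{\otimes n}$ decomposes by Schur–Weyl duality as $\bigoplus_{0 \le j \le n/2} L_{n-2j} \boxtimes \rho_{[n-j,j]}$, where $L_m$ is the irreducible $\mathfrak{sl}_2$-representation of dimension $m+1$. The grading of $V$ by $i$ corresponds to the weight-space grading for the Cartan $h \in \mathfrak{sl}_2$ (the $i$-th summand $V_i$ is the $(2i-n)$-weight space, say), so $V_{i,[n-j,j]}$ is the $(2i-n)$-weight space of $L_{n-2j}$, tensored with $\rho_{[n-j,j]}$. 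The weight $(2i-n)$ occurs in $L_{n-2j}$ iff $|2i-n| \le n-2j$, i.e. iff $j \le i \le n-j$, and when it occurs it occurs with multiplicity exactly $1$. Setting $k = n-j$ translates this to: $V_{i,[k,n-k]} \ne 0$ iff $n-k \le i \le k$, with multiplicity $1$ — which is exactly $(a)$ and $(b)$. The main thing to be careful about is the bookkeeping identifying which $i$-summand is which $h$-weight space and the edge case $j = n/2$ (the sign/trivial representation for the two-row shape $[n/2,n/2]$, whose only nonzero weight space is $i = n/2$), plus confirming the convention $[n,0]=[n]$ matches the $j=0$ term. I would present the $\mathfrak{sl}_2$-argument as the proof, since it dovetails with the paper's emphasis on Schur–Weyl duality and the $e+f$ action, and is essentially the only nontrivial input; the Kostka-number computation would serve as the cross-check. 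I do not anticipate a genuine obstacle here — the statement is a standard fact about two-row permutation modules — so the "hard part" is merely choosing notation consistent with the rest of Section 4 and handling the degenerate two-row partition cleanly.
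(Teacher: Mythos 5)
Your first route (Young's rule / Kostka numbers for the two-part content $(i,n-i)$) is, up to phrasing, exactly the paper's proof: the paper disposes of the statement as "a simple application of the Littlewood--Richardson rule," and for a product of two one-row shapes the LR rule \emph{is} Young's/Pieri's rule, giving $K_{[k,n-k],(i,n-i)}\in\{0,1\}$ with nonvanishing precisely when $n-k\le i\le k$; your semistandard-tableau bookkeeping, though a bit meandering (the condition $k\ge\max(i,n-i)$, equivalently $n-k\le\min(i,n-i)$, is the clean form), lands on the right answer. Your preferred alternative via Schur--Weyl is also correct, but note how it sits relative to the paper's logical order: the paper proves Proposition \ref{LRrule} combinatorially and only then deduces, in Corollary \ref{isotypic component}, \emph{which} irreducible $\mathfrak{sl}_2$-module is paired with $\rho_{[k,n-k]}$ (using only the abstract form of Schur--Weyl plus the dimension count from the proposition). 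Your argument instead imports the full classical Schur--Weyl decomposition of $(\Q_\ell^2)^{\otimes n}$ with the explicit pairing of $\mathrm{Sym}^{n-2j}\Q_\ell^2$ with $\rho_{[n-j,j]}$ as external input, identifies $V_i$ with the $h$-weight space of weight $2i-n$, and reads off both (a) and (b) from the weight multiplicities; this is fine and even makes Corollary \ref{isotypic component} immediate, but you must indeed quote that explicit duality from the literature rather than from the paper's Corollary \ref{isotypic component}, which would be circular here. In short: the tableau computation buys a self-contained elementary proof matching the paper; the weight-space argument buys a slicker proof better aligned with the $e+f$ theme of Section 4, at the cost of invoking a stronger classical statement up front.
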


\begin{proof}
This is a simple application of the Littlewood-Richardson rule.
\end{proof}

Note that the diagonal action of $\GL_2(\Q_\ell)$ on $V$ commutes with the $S_n$ action.  There is also a lie algebra action of $\mathfrak{sl}_2$ on $V \simeq (\Q_\ell^2)^{\otimes 2n}$, given by  
\begin{align*}
X \cdot v_1 \otimes &v_2 \otimes \cdots \otimes v_n  \\
&= Xv_1 \otimes \cdots \otimes v_n + v_1 \otimes Xv_2 \otimes \cdots \otimes v_n +  \ldots + v_1 \otimes \cdots \otimes Xv_n.
\end{align*}
This too commutes with the $S_n$ action.  By Schur-Weyl duality, each $V_\lambda$ is isomorphic to $M_\lambda \otimes \rho_\lambda$, for some irreducible $\mathfrak{sl}_2$-representation $M_\lambda$ (on which $S_n$ acts trivially).  

\begin{corollary}\label{isotypic component}
If $\lambda = [k,n-k]$, then $V_\lambda \simeq \mathrm{Sym}^{2k - n}\Q_\ell^2 \otimes \rho_\lambda$.
\end{corollary}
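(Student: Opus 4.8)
The plan is to combine Proposition \ref{LRrule} with Schur--Weyl duality, computing the $\mathfrak{sl}_2$-factor $M_\lambda$ for $\lambda = [k,n-k]$ by tracking how the weight spaces of $V$ decompose. First I would recall that under the $\mathfrak{sl}_2$-action on $V \simeq (\Q_\ell^2)^{\otimes n}$ (note $n$ here plays the role of $2d$; I will write $n$), the weight-$m$ space is spanned by pure tensors in the two basis vectors of $\Q_\ell^2$ with a prescribed number of each factor, and that the subspace $V_i$ is precisely one such weight space: if $e_1, e_2$ is the standard basis, then $V_i$ is identified with the span of tensors having exactly $i$ factors equal to $e_1$ and $n-i$ equal to $e_2$, which is the $\mathfrak{sl}_2$-weight space of weight $2i - n$ (using the convention where $\mathrm{Sym}^m$ has weights $m, m-2, \ldots, -m$). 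In particular, as an $\mathfrak{sl}_2 \times S_n$-representation, $V_i = \bigoplus_\lambda M_\lambda[2i-n] \otimes \rho_\lambda$, where $M_\lambda[w]$ denotes the weight-$w$ subspace of $M_\lambda$.

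Next I would invoke Proposition \ref{LRrule}: $V_{i,\lambda} \neq 0$, and then equals a single copy of $\rho_\lambda$, exactly when $\lambda = [k, n-k]$ and $n - k \leq i \leq k$. Translating through the weight-space identification, this says $M_{[k,n-k]}[w] $ is one-dimensional for $w = 2i - n$ with $n-k \leq i \leq k$, i.e.\ for $w \in \{-(2k-n), -(2k-n)+2, \ldots, 2k-n\}$, and is zero for all other weights. An irreducible $\mathfrak{sl}_2$-representation is determined by its set of weights, and the unique irreducible with weights exactly $\{-(2k-n), \ldots, 2k-n\}$ in steps of $2$ is $\mathrm{Sym}^{2k-n}\Q_\ell^2$. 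Hence $M_{[k,n-k]} \simeq \mathrm{Sym}^{2k-n}\Q_\ell^2$, and by Schur--Weyl duality $V_\lambda \simeq M_\lambda \otimes \rho_\lambda \simeq \mathrm{Sym}^{2k-n}\Q_\ell^2 \otimes \rho_\lambda$, which is the claim. (One should also note $2k - n \geq 0$ since $k \geq n - k$ for $[k,n-k]$ to be a partition, so $\mathrm{Sym}^{2k-n}$ makes sense.)

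The only genuine subtlety — and the step I would be most careful about — is matching conventions so that the subspace $V_i \subset V$ is literally the $\mathfrak{sl}_2$-weight space of a definite weight, rather than merely abstractly isomorphic to it. This requires observing that the $\GL_2$- (hence $\mathfrak{sl}_2$-) action preserves the bin-size decomposition only up to the diagonal torus: the torus $\mathrm{diag}(t_1,t_2)$ acts on the span of tensors with $i$ copies of $e_1$ by the character $t_1^i t_2^{n-i}$, so $V_i$ is a torus weight space, and the Cartan element $h = \mathrm{diag}(1,-1) \in \mathfrak{sl}_2$ acts on it by the scalar $2i - n$. Everything else is bookkeeping. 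No separate argument is needed for irreducibility of $M_\lambda$ since that is part of the Schur--Weyl statement already quoted; the content is solely the weight computation, so the proof is genuinely short.
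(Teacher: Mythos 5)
Your argument is correct, but it takes a finer route than the paper's. The paper's proof is a pure dimension count: by Proposition \ref{LRrule}, $V_{[k,n-k]}$ contains $\rho_\lambda$ once for each $i$ with $n-k \leq i \leq k$, so $\dim V_\lambda = (2k-n+1)\dim \rho_\lambda$; since Schur--Weyl duality already gives $V_\lambda \simeq M_\lambda \otimes \rho_\lambda$ with $M_\lambda$ irreducible, and $\mathrm{Sym}^{2k-n}\Q_\ell^2$ is the unique irreducible $\mathfrak{sl}_2$-representation of dimension $2k-n+1$, the identification of $M_\lambda$ is immediate. You instead identify each $V_i$ as the Cartan weight space of weight $2i-n$ (via the diagonal torus acting by $t_1^i t_2^{n-i}$) and read off from Proposition \ref{LRrule} that $M_{[k,n-k]}$ has weights $-(2k-n), -(2k-n)+2, \ldots, 2k-n$, each with multiplicity one. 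Both proofs rest on the same two inputs (Proposition \ref{LRrule} and Schur--Weyl), but yours proves slightly more: it pins down $V_{i,\lambda}$ as exactly the weight-$(2i-n)$ subspace of $V_\lambda$, which is the structural fact lurking behind the later eigenvalue computation (Corollary \ref{eigs}), where the paper instead argues by conjugating $e+f$ to $h$. The paper's count is shorter; your weight bookkeeping is a legitimate and slightly more informative alternative, and your care about the convention matching ($V_i$ being literally a weight space, not just abstractly isomorphic to one) is exactly the point one has to check for that route to work.
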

\begin{proof}
By Proposition \ref{LRrule}, we have $\dim V_\lambda = (2k-n+1)\dim\rho_\lambda.$   
Since $\mathrm{Sym}^{2k-n}\Q_\ell^2$ is the unique irreducible representation of $\mathfrak{sl}_2$ of dimension $2k-n + 1$, we must have $M_\lambda \simeq \mathrm{Sym}^{2k - n}\Q_\ell^2$.
\end{proof}

Let $\{e_+,e_-\}$ be a basis for $\Q_\ell^2$. For $\epsilon \in \{\pm\}^n$, let $e_{\epsilon}$ be the corresponding basis element of $V$.  Also let $\epsilon_i$ be the element 
\[(+,+,\ldots,+,-,+,\ldots,+,+) \in \{\pm\}^n,\]
with a minus sign in the $i$-th coordinate. 

Let $H \colon V \to V$ be the $\Q_\ell$-linear map determined by  
\[H(e_\epsilon)  =  \sum_{i = 1}^n e_{\epsilon\epsilon_i}.\]
\begin{lemma}
$H$ commutes with the $S_n$ action on $V$.
\end{lemma}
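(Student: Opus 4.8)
The plan is to recognize $H$ as the Lie algebra action of the element $e+f = \left(\begin{smallmatrix} 0 & 1 \\ 1 & 0\end{smallmatrix}\right) \in \mathfrak{sl}_2$ on $V \simeq (\Q_\ell^2)^{\otimes n}$ described just above, and then to invoke the already-noted fact that this $\mathfrak{sl}_2$-action commutes with the $S_n$-action. So the entire content of the lemma is the bookkeeping identification $H = e+f$.

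To make that identification, I would fix the basis $\{e_+, e_-\}$ of $\Q_\ell^2$ so that $e+f$ interchanges the two basis vectors, i.e.\ $(e+f)e_\pm = e_\mp$. Under the isomorphism $V \simeq (\Q_\ell^2)^{\otimes n}$ sending $e_\epsilon$ to $e_{\epsilon_1}\otimes\cdots\otimes e_{\epsilon_n}$, the defining formula for the $\mathfrak{sl}_2$-action gives
\[
(e+f)\cdot e_\epsilon = \sum_{i=1}^n e_{\epsilon_1}\otimes\cdots\otimes (e+f)e_{\epsilon_i}\otimes\cdots\otimes e_{\epsilon_n} = \sum_{i=1}^n e_{\epsilon\epsilon_i},
\]
because replacing the $i$-th tensor factor $e_{\epsilon_i}$ by $e_{-\epsilon_i}$ is precisely the operation $\epsilon\mapsto\epsilon\epsilon_i$. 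Hence $H$ is exactly the operator $v\mapsto (e+f)\cdot v$, and commutativity with $S_n$ is inherited from the corresponding statement for the $\mathfrak{sl}_2$-action.

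Alternatively, to keep the argument self-contained and avoid citing the Schur--Weyl set-up, I would give a direct check: write $\sigma\cdot e_\epsilon = e_{\sigma\epsilon}$ with $(\sigma\epsilon)_j = \epsilon_{\sigma^{-1}(j)}$, observe that $\sigma(\epsilon\epsilon_i) = (\sigma\epsilon)\epsilon_{\sigma(i)}$ (flipping the $i$-th coordinate of $\epsilon$ and then permuting by $\sigma$ is the same as permuting and then flipping the $\sigma(i)$-th coordinate), and conclude
\[
\sigma\cdot H(e_\epsilon) = \sum_{i=1}^n e_{\sigma(\epsilon\epsilon_i)} = \sum_{i=1}^n e_{(\sigma\epsilon)\epsilon_{\sigma(i)}} = \sum_{j=1}^n e_{(\sigma\epsilon)\epsilon_j} = H(e_{\sigma\epsilon}) = H(\sigma\cdot e_\epsilon),
\]
reindexing by $j=\sigma(i)$ in the middle equality. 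I do not expect any genuine obstacle here; the lemma is either an immediate consequence of the general compatibility of Schur--Weyl dual actions or the one-line reindexing above. The reason to isolate it is to set up the later analysis of the powers $H^r$ acting on the isotypic components $V_{i,\lambda}$ via Corollary \ref{isotypic component}, which is where the representation theory does real work.
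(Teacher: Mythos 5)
Your proposal is correct and matches the paper's own reasoning: the lemma is justified there precisely by noting that $H$ is the action of $\left(\begin{smallmatrix} 0 & 1 \\ 1 & 0\end{smallmatrix}\right) = e+f \in \mathfrak{sl}_2$ on $V \simeq (\Q_\ell^2)^{\otimes n}$, which was already observed to commute with the $S_n$-action. Your supplementary direct reindexing check is a fine (and correct) extra, but the first identification is exactly the paper's argument.
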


In fact, the map $H$ is simply the action of $(\begin{smallmatrix} 0 & 1 \\ 1 & 0\end{smallmatrix}) \in \mathfrak{sl}_2$ on $V$.  
Now suppose $\lambda = [k,n-k]$.  Since $H$ commutes with the $S_n$-action, it acts on $V_\lambda$ as $H_\lambda \otimes 1$, for some $H_\lambda \in \End(\mathrm{Sym}^{2k}\Q_\ell^2)$.    
\begin{corollary}\label{eigs}
Let $\lambda = [k,n-k]$.  Then the eigenvalues of $H_\lambda$ are 
\[\{-2k,-2k+2,\ldots,-2,0,2,\ldots, 2k-2,2k\}\] each appearing with multiplicity one.
\end{corollary}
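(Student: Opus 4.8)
The plan is to recognize $H_\lambda$ as a single, easily diagonalized operator. As observed just above the statement, $H$ is the action of $e+f = (\begin{smallmatrix} 0 & 1 \\ 1 & 0 \end{smallmatrix}) \in \mathfrak{sl}_2$ on $V$; since $H$ commutes with the $S_n$-action it preserves the Schur--Weyl decomposition $V_\lambda \simeq M_\lambda \otimes \rho_\lambda$ and acts there as $H_\lambda \otimes 1$, where $H_\lambda$ is simply the action of $e+f$ on the irreducible $\mathfrak{sl}_2$-module $M_\lambda$. By Corollary \ref{isotypic component}, $M_\lambda \simeq \mathrm{Sym}^{2k-n}\Q_\ell^2$, so the whole corollary reduces to computing the eigenvalues of $e+f$ on $\mathrm{Sym}^m\Q_\ell^2$ with $m := 2k-n$.

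For that step I would use that $e+f$ is conjugate to the standard Cartan element $h = (\begin{smallmatrix} 1 & 0 \\ 0 & -1 \end{smallmatrix})$ inside $\GL_2(\Q_\ell)$: one has $g^{-1}(e+f)g = h$ for $g = (\begin{smallmatrix} 1 & 1 \\ 1 & -1 \end{smallmatrix})$, which is invertible since $\det g = -2 \neq 0$ in the characteristic-zero field $\Q_\ell$. Because the $\mathfrak{sl}_2$-action on $\mathrm{Sym}^m\Q_\ell^2$ is the differential of the $\GL_2(\Q_\ell)$-action, conjugating by the image of $g$ in $\GL(\mathrm{Sym}^m\Q_\ell^2)$ carries the operator attached to $e+f$ to the operator attached to $h$, so the two have the same eigenvalues. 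And $h$ acts on $\mathrm{Sym}^m\Q_\ell^2$ diagonally in the monomial basis $x^a y^b$ ($a+b=m$) with eigenvalue $a - b = 2a - m$; hence its eigenvalues are the $m+1$ integers $m, m-2, \dots, -m$, each with multiplicity one. Pulling back through Corollary \ref{isotypic component} gives the spectrum of $H_\lambda$. (Equivalently, without invoking the conjugacy: realize $\mathrm{Sym}^m\Q_\ell^2$ as homogeneous degree-$m$ polynomials in $x,y$, on which $e+f$ is the derivation $x\partial_y + y\partial_x$; in the coordinates $u = x+y$, $v = x-y$ this becomes $u\partial_u - v\partial_v$, manifestly diagonal in $\{u^a v^b\}$ with eigenvalue $a-b$.)

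There is no real obstacle here: once Corollary \ref{isotypic component} is in hand this is elementary $\mathfrak{sl}_2$-representation theory, the genuine content having already been absorbed into Schur--Weyl duality. The only points worth a sentence of justification are that the conjugation $g^{-1}(e+f)g = h$ can be carried out over $\Q_\ell$ itself (this is where $\mathrm{char}\,\Q_\ell = 0$ enters, so that $-2$ is invertible), and the routine bookkeeping that identifies $H_\lambda$ with the action of $e+f$ on $M_\lambda$ under the decomposition $V_\lambda \simeq M_\lambda \otimes \rho_\lambda$.
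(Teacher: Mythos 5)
Your proof is correct and follows essentially the same route as the paper: identify $H_\lambda$ with the action of $e+f$ on the symmetric-power factor supplied by Corollary \ref{isotypic component}, conjugate $e+f$ to $h$ (valid over $\Q_\ell$ since $2$ is invertible), and read off the weights of $h$ on the monomial basis. Your bookkeeping with $\mathrm{Sym}^{2k-n}\Q_\ell^2$ is the more careful version: the paper's proof writes $\mathrm{Sym}^{2k}$ here, a notational slip relative to Corollary \ref{isotypic component} (in the application one has $n=2d$ and partition $[2d-k,k]$, so the relevant factor is $\mathrm{Sym}^{2d-2k}\Q_\ell^2$, exactly as used in Proposition \ref{beta decomp}).
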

\begin{proof}
The matrix $(\begin{smallmatrix} 0 & 1 \\ 1 & 0\end{smallmatrix})$ is usually denoted $e + f$ in the representation theory of $\mathfrak{sl}_2$.  It is conjugate to the matrix $(\begin{smallmatrix} 1 & 0 \\ 0 & -1\end{smallmatrix})$, often denoted $h$.  Thus, the characteristic polynomial of $H_\lambda$ is the same as that of $h$ acting on the space $\mathrm{Sym}^{2k}\Q_\ell^2$.  The natural basis for  $\mathrm{Sym}^{2k}\Q_\ell^2$ are eigenvectors for $h$ with eigenvalues precisely the ones stated.  
\end{proof}
\subsection{Local systems}
Now let $\ell$ be a prime different from char $\kk$.  Recall the split double cover $W = Y \coprod Y$ of $Y$. Let  $U_{2d}(Y)  \subset Y^{2d}$  be the open subscheme parametrizing  $2d$-tuples of \emph{distinct} points on $Y$,
and let $U_{2d}(W) \subset W^{2d}$ be its preimage under the morphism $W^{2d} \to Y^{2d}$. 
Thus we have a cartesian diagram
\[
\xymatrix{
{  U_{2d}(W) } \ar[r] \ar[d] &  { W^{2d}  }  \ar[d] \\
{  U_{2d}(Y)  }  \ar[r]  & {  Y^{2d} },
}
\]
in which the horizontal arrows are open immersions with dense image, and the vertical arrows are finite \'etale.  
Both $W^n$ and $U_{2d}(W)$ are disconnected.  The connected components are 
\[W^{2d} = \coprod_{i = 0}^{2d} (Y^i \times Y^{2d-i})\]
\[U_{2d}(W) = \coprod_{i = 0}^{2d} U_{i,2d-i}(W).\]

Taking the appropriate quotients, and using the isomorphisms of (\ref{symmetric id}), we obtain a cartesian diagram
\begin{equation}\label{unramified stratum}
\xymatrix{
{ S_{i} \times S_{2d-i} \backslash U_{i,2d-i}(W) } \ar[r] \ar[d]_{b_i} &  {   \Sigma_{i,2d-i}(W) }  \ar[d]^{\Nm} \\
{   S_{2d} \backslash U_{2d}(Y)  }  \ar[r]_u  & {   \Sigma_{2d}(Y) ,}
}
\end{equation}
in which the horizontal arrows are open immersions, the vertical arrows are finite, and $b_i$ is \'etale.

The Galois cover  
\[
U_{i,2d-i}(W) \to S_{2d} \backslash U_{2d}(Y),
\]
has group $S_{2d}$, and the local system $b_{i*}\Q_\ell$ on $S_{2d}\backslash U_{2d}(Y)$  corresponds to the induced representation $V_i$ from the previous section. 
We define 
\[b_*\Q_\ell:= \bigoplus_{i = 0}^{2d} b_{i*}\Q_\ell.\] 
This is a local system  on $S_{2d}\backslash U_{2d}(Y)$ corresponding to the $S_{2d}$-representation $V \simeq (\Q_\ell^2)^{\otimes 2d}$.
If $\lambda \vdash n$, we let $\widetilde L_\lambda$ be the local system on $S_{2d}\backslash U_{2d}(Y)$ corresponding to $\rho_\lambda$.  

Define  $A_d^\circ$ as the cartesian product
\[
\xymatrix{
{  A_d^\circ } \ar[r]^v \ar[d]_\pi &   {  A_d }   \ar[d]^{\nu^\sharp}  \\
{      S_{2d} \backslash U_{2d}(Y)    }  \ar[r]_u&  {  \Sigma_{2d}(Y), } 
}
\]
and set $L_\lambda = v_*\pi^*\widetilde L_\lambda$.  Recall from \S2 the maps $\beta_i \colon N_{(i,2d-i)} \to A_d$, for each $i = 0,\ldots, 2d$.   

\begin{proposition}\label{beta i decomp}
For each $i \in \{0,1,\ldots, d\}$, there is an isomorphism 
\[\beta_{i*}\Q_\ell \simeq (\beta_{2d-i})_*\Q_\ell\] 
and a decomposition
\[\beta_{i*}\Q_\ell = \bigoplus_{k = 0}^i L_{[2d-k,k]}.\]
\end{proposition}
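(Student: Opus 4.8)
The plan is to deduce Proposition \ref{beta i decomp} from the étale description of $\beta_i$ over the open locus $S_{2d}\backslash U_{2d}(Y)$ together with the representation-theoretic input of \S4.1. First I would observe that it suffices to identify the local systems $\beta_{i*}\Q_\ell$ on the open dense substack $A_d^\circ \subset A_d$ where the section $\xi$ has distinct zeros: both sides are constructible sheaves on a smooth $\kk$-scheme (using Proposition \ref{prop:Nsmooth}(2), so we are assuming $d \geq 2g_1 - 1$), the relevant maps $\beta_i$ and $b_i$ are finite, and $\beta_i$ restricted to $A_d^\circ$ is finite étale with the same fibers as $b_i$; hence $\beta_{i*}\Q_\ell$ restricted to $A_d^\circ$ is the pullback $v^*$ of a local system on $S_{2d}\backslash U_{2d}(Y)$, and by purity/smoothness it is determined on all of $A_d$ by its restriction to the dense open $A_d^\circ$. (Here I am using the cartesian square defining $A_d^\circ$ and the compatibility $u \circ \pi = \nu^\sharp \circ v$.)

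Next I would identify, over $S_{2d}\backslash U_{2d}(Y)$, the local system underlying $\beta_i$ with $b_{i*}\Q_\ell$. Concretely: a point of $A_d^\circ$ is a line bundle $\Delta$ with a section $\xi \in H^0(Y,\nu^*\Delta)$ having $2d$ distinct zeros and nonzero trace; the fiber of $N_{(i,2d-i)} \to A_d$ over such a point records a factorization of $\mathrm{div}(\xi)$ as a sum of an effective divisor of degree $i$ and one of degree $2d-i$ on $Y$ (via the map $\delta$ in (\ref{fundamental 2}) and the cartesian square there), which is exactly the combinatorial datum of splitting $2d$ distinct points into two bins of sizes $i$ and $2d-i$. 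This is precisely the $S_{2d}$-set computing $V_i = \mathrm{Ind}_{S_i\times S_{2d-i}}^{S_{2d}}\bm 1$, matching the description of $b_{i*}\Q_\ell$ given just before the proposition. Therefore $\beta_{i*}\Q_\ell = L_{V_i}$, the local system attached to $V_i$ (extended from $A_d^\circ$ to $A_d$ as above). The isomorphism $\beta_{i*}\Q_\ell \simeq \beta_{(2d-i)*}\Q_\ell$ then follows from $V_i \simeq V_{2d-i}$ as $S_{2d}$-representations (swapping the two bins).

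Finally I would apply Proposition \ref{LRrule}: it gives $V_i = \bigoplus_{\lambda \vdash 2d} V_{i,\lambda}$ with $V_{i,\lambda} \neq 0$, of multiplicity one, exactly when $\lambda = [k, 2d-k]$ with $2d - k \leq i \leq k$, i.e.\ for $k$ ranging over $2d - i \leq k \leq $ (anything with $k \geq d$), equivalently — after using $\beta_{i*}\Q_\ell \simeq \beta_{(2d-i)*}\Q_\ell$ to reduce to $i \leq d$ — over $2d - i \leq k$, which rewrites as the stated sum $\beta_{i*}\Q_\ell = \bigoplus_{k=0}^i L_{[2d-k,k]}$ once one passes to the partitions $[2d-k,k]$ with $0 \leq k \leq i$ (note $[2d,0] = [2d]$ is included, per the convention fixed in \S4.1). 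Matching $V_\lambda \leftrightarrow \widetilde L_\lambda \leftrightarrow L_\lambda$ under $v_*\pi^*$ and summing the isotypic pieces yields the decomposition.

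The main obstacle I anticipate is the first step: justifying rigorously that $\beta_{i*}\Q_\ell$ is \emph{lisse} and is the clean extension (e.g.\ a local system, not just a perverse sheaf with punctual contributions) determined by its restriction to $A_d^\circ$. This needs the smoothness of $N_{(i,2d-i)}$ and finiteness of $\beta_i$ from Proposition \ref{prop:Nsmooth}, plus a check that $\beta_i$ is everywhere étale on the relevant locus (or at least that $\beta_{i*}\Q_\ell$ is constructible and its generic stalk already accounts for its rank everywhere) — in \cite{HS} this kind of argument is carried out for the twisted local systems, and the present (trivial-coefficient) case should be a simplification of it, so I would cite that and indicate the simplification rather than redo it.
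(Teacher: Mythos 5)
Your proposal matches the paper's proof in essence: the paper likewise uses the cartesian square in (\ref{fundamental 2}), proper base change, and the smoothness of $N_{(i,2d-i)}$ (for $d \geq 2g_1-1$) to get $\beta_{i*}\Q_\ell \simeq v_*\pi^*b_{i*}\Q_\ell$, citing \cite[Prop.\ 5.3]{HS} for exactly the extension-from-the-open-locus step you flag as the main obstacle, and then concludes by Proposition \ref{LRrule}. Your combinatorial identification of the fibers over $A_d^\circ$ with the $S_{2d}$-set underlying $V_i$ and the bookkeeping giving the range $0 \leq k \leq i$ are correct and consistent with the paper's (terser) argument.
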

\begin{proof}
By Proposition \ref{LRrule}, we have 
\[b_{i*}\Q_\ell = \bigoplus_{k=0}^i \widetilde L_{[2d-k,k]}.\]
On the other hand, it follows from proper base change and the smoothness of $N_{(i,2d-i)}$ that
\[\beta_{i*}\Q_\ell \simeq v_*\pi^*b_{i*}\Q_\ell;\]
c.f.\ \cite[Prop.\ 5.3]{HS}. The proposition now follows.
\end{proof}

\begin{proposition}\label{beta decomp}
There is a decomposition 
\[\beta_*\Q_\ell = \bigoplus_{k = 0}^d \left(\mathrm{Sym}^{2d-2k}\Q_\ell^2\otimes L_{[2d-k,k]}\right),\]
and the endomorphism $[\YZ_d]$ stabilizes each summand.  The action of $[\YZ_d]$ on $\mathrm{Sym}^{2d-2k}\Q_\ell^2\otimes L_{[2d-k,k]}$ is of the form $H_k \otimes 1$, for some $H_k$ having characteristic polynomial 
\[\det(t - H_k) = \prod_{j = k-d}^{d-k}(t - 2j).\]
\end{proposition}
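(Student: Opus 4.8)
The plan is to combine the decomposition of $\beta_{i*}\Q_\ell$ from Proposition~\ref{beta i decomp}, the representation-theoretic identification of the Yun--Zhang operator with the $\mathfrak{sl}_2$-element $e+f$ from \S4.1, and Corollary~\ref{eigs}. First I would reassemble the local system $\beta_*\Q_\ell = \bigoplus_{i=0}^{2d}\beta_{i*}\Q_\ell$. Using Proposition~\ref{beta i decomp} and the symmetry $\beta_{i*}\Q_\ell \simeq \beta_{(2d-i)*}\Q_\ell$, summing $\bigoplus_{k=0}^{\min(i,2d-i)} L_{[2d-k,k]}$ over all $i$ counts each $L_{[2d-k,k]}$ with multiplicity equal to the number of $i\in\{0,\dots,2d\}$ with $k\le i\le 2d-k$, which is $2d-2k+1 = \dim\mathrm{Sym}^{2d-2k}\Q_\ell^2$. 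This matches the claimed decomposition $\beta_*\Q_\ell = \bigoplus_{k=0}^d \mathrm{Sym}^{2d-2k}\Q_\ell^2 \otimes L_{[2d-k,k]}$ at the level of underlying local systems; one must check it is not merely a numerical coincidence but a genuine canonical decomposition, which follows because $\beta_*\Q_\ell \simeq v_*\pi^* b_*\Q_\ell$ (proper base change plus smoothness of the $N_{(i,2d-i)}$, exactly as in Proposition~\ref{beta i decomp}) and $b_*\Q_\ell$ corresponds to the $S_{2d}$-representation $V\simeq(\Q_\ell^2)^{\otimes 2d}$, whose isotypic decomposition under Schur--Weyl duality is exactly $\bigoplus_\lambda M_\lambda\otimes\rho_\lambda$ with $M_{[2d-k,k]} = \mathrm{Sym}^{2d-2k}\Q_\ell^2$ by Corollary~\ref{isotypic component}.

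The second and main step is to identify the operator $[\YZ_d]$ on $\beta_*\Q_\ell$ with the operator $H$ on $V$ from \S4.1 (equivalently, the action of $e+f\in\mathfrak{sl}_2$). Here I would use Proposition~\ref{prop:rep one paw}, which realizes the Yun--Zhang correspondence $\YZ_d$ as the base change along $A_d\to\Sigma_{2d}(Y)$ of the correspondence $H_d(W)$ on $\Sigma_{2d}(W)$. Restricting over the open étale locus $S_{2d}\backslash U_{2d}(Y)$, the correspondence $H_d(W)$ translates (via the dictionary $\beta_{i*}\Q_\ell\leftrightarrow V_i$, $b_*\Q_\ell\leftrightarrow V$) into an explicit $S_{2d}$-equivariant endomorphism of $V = (\Q_\ell^2)^{\otimes 2d}$. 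The content of Remark~\ref{no preserve} — that $H_d(W)$ sends $\Sigma_{i,2d-i}(Y)$ to $\Sigma_{i+1,2d-i-1}(Y)\coprod\Sigma_{i-1,2d-i+1}(Y)$ by adding or removing one point — says precisely that on basis vectors $e_\epsilon$ the operator flips one sign in each of the $2d$ coordinates and sums, i.e.\ it is the map $H$ with $H(e_\epsilon) = \sum_i e_{\epsilon\epsilon_i}$. Since a local system on a connected base is determined by the corresponding representation and an endomorphism of the local system is determined by the corresponding equivariant endomorphism, this comparison over the open dense locus $S_{2d}\backslash U_{2d}(Y)$ propagates to an equality of endomorphisms of $\beta_*\Q_\ell$ on all of $A_d$ (using again that $\beta_*\Q_\ell = v_*\pi^* b_*\Q_\ell$, so endomorphisms are pulled back from the étale base).

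Granting the identification $[\YZ_d] = H$, the final step is immediate: $H$ is $S_{2d}$-equivariant (the stated Lemma), so it preserves each isotypic summand $\mathrm{Sym}^{2d-2k}\Q_\ell^2\otimes L_{[2d-k,k]}$ and acts there as $H_{[2d-k,k]}\otimes 1$ in the notation of Corollary~\ref{eigs} (with $n = 2d$). By that corollary, with $\lambda = [2d-k,k]$ so that $\mathrm{Sym}^{2k-n}$ reads $\mathrm{Sym}^{2d-2k}$, the eigenvalues of $H_{[2d-k,k]}$ are $\{-(2d-2k),\dots,-2,0,2,\dots,2d-2k\}$, each with multiplicity one; writing $H_k := H_{[2d-k,k]}$ this gives $\det(t - H_k) = \prod_{j=k-d}^{d-k}(t-2j)$, as claimed.

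I expect the main obstacle to be the second step: carefully matching the geometric correspondence $H_d(W)$ — with its bookkeeping of the automorphisms $\tau_1,\tau_3$ of $W$, the divisor move $y^{\tau_1}-y^{\tau_3}$, and the base change along $A_d\to\Sigma_{2d}(Y)$ via Proposition~\ref{prop:rep one paw} — against the purely combinatorial sign-flip operator $H$ on $(\Q_\ell^2)^{\otimes 2d}$, and verifying that the adjunction maps defining $[\YZ_d]$ (which involve the degrees of the finite maps $\gamma_0,\gamma_1$) introduce no spurious scalars. The restriction to the étale locus $S_{2d}\backslash U_{2d}(Y)$, where everything is governed by a single finite group action, is what makes this tractable; one then invokes the equivalence between local systems and representations to conclude on all of $A_d$.
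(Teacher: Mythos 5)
Your proposal follows essentially the same route as the paper: the decomposition is obtained as in Proposition~\ref{beta i decomp} using $V=\bigoplus_{k}V_{[2d-k,k]}$ and Corollary~\ref{isotypic component}, the operator $[\YZ_d]$ restricted to $A_d^\circ$ is identified with $\pi^*H$ via Proposition~\ref{prop:rep one paw}, and the characteristic polynomial then follows from Corollary~\ref{eigs}. Your extra care about propagating the identification from the open locus and about normalizing the adjunction maps is a reasonable expansion of what the paper leaves implicit, but it is not a different argument.
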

\begin{proof}
The proof of the first part is as in Proposition \ref{beta i decomp}, this time using the decomposition 
\[V= \bigoplus_{k=0}^d V_{[2d-k,k]}\]
and Corollary \ref{isotypic component}.
The $S_{2d}$-map $H \colon V \to V$ corresponds to a map $H \colon b_*\Q_\ell \to b_*\Q_\ell$ of local systems. By Proposition \ref{prop:rep one paw}, the map $[\YZ_d]|_{A_d^\circ}$ is identified with $\pi^*H$.  Thus, the second part of the proposition follows from Corollary \ref{eigs}.    
\end{proof}

\subsection{The key identity}
Fix an auxiliary prime $\ell \neq \mathrm{char} (\kk)$, and define $\mathscr{H}_\ell=\mathscr{H}\otimes\Q_\ell$, the $\ell$-adic analogue of the $\Q$-algebra $\mathscr{H}$ of  \S \ref{ss:basic automorphic}.

The Hecke algebra $\mathscr{H}_\ell$ acts on the $\ell$-adic cohomology group
\[
V = H_c^{2r}(\Sht^\mu_{G} \otimes_\kk \bar \kk, \Q_\ell)(r),
\]
as in \cite[\S 7.1]{YZ}.   The cycle class map $\mathrm{cl} \colon \Ch_{c,r}(\Sht_{G}^r) \to V$
 is $\mathscr{H}$-equivariant, and  the cup product   
 \begin{equation}\label{cup}
 \langle\cdot,\cdot\rangle:  V\times V \to \Q_\ell
 \end{equation}
 pulls back to  the intersection pairing on the Chow group.

Recalling the map $\mathscr{H} \to \Q[\Pic_X(\kk)]^{\iota_\Pic}$ appearing in (\ref{eisenstein def}), define
\begin{align*}
\widetilde{\mathscr{H}}_\ell 
& = \mathrm{Image}  \big(  \mathscr{H}_\ell \to \End_{\Q_\ell} (V) \times \End_{\Q_\ell}(\mathscr{A}_\ell) \times \Q_\ell[\Pic_X(\kk)]^{\iota_\Pic}  \big)   \\
\overline{\mathscr{H}_\ell} 
& = \mathrm{Image}  \big(  \mathscr{H}_\ell \to \End_{\Q_\ell} (V) \times  \Q_\ell[\Pic_X(\kk)]^{\iota_\Pic}  \big) \\
\mathscr{H}_{\mathrm{aut},\ell} 
& = \mathrm{Image}  \big(  \mathscr{H}_\ell \to \End_{\Q_\ell}(\mathscr{A}_\ell) \times \Q_\ell[\Pic_X(\kk)]^{\iota_\Pic}  \big).
\end{align*}
These are  finite type $\Q_\ell$-algebras, related by surjections
\[
\xymatrix{
& { \widetilde{\mathscr{H}}_\ell } \ar[dl] \ar[dr] \\
{  \overline{\mathscr{H}_\ell}  }  \ar[dr] &  &  {  \mathscr{H}_{\mathrm{aut},\ell}  }  \ar[dl] \\
& {   \Q_\ell[\Pic_X(\kk)]^{\iota_\Pic}  .  }   
}
\]
Recalling the $\Q$-algebra $\mathscr{H}_\mathrm{aut}$ of \S \ref{ss:basic automorphic}, there is a canonical isomorphism 
\[
 \mathscr{H}_\mathrm{aut} \otimes\Q_\ell \iso \mathscr{H}_{\mathrm{aut},\ell} .
\]
For any $f\in \mathscr{H}$, the function $\J(f,s)$ of (\ref{J distribution def}) is a Laurent polynomial in $q^s$ with rational coefficients.
Setting 
\[
\J_r(f) = ( \log q)^{-r}  \frac{d^r}{ds^r}  \J(f ,s)  \big|_{s=0},
\]
we obtain a linear functional $\J_r : \mathscr{H} \to \Q$.  The following result shows that this agrees with the linear functional 
$\I_r : \mathscr{H} \to \Q$ defined by (\ref{I distribution def}).

\begin{theorem}\label{prop:key identity}
If $f \in \mathscr{H}$, then $\I_r(f) = \J_r(f)$.  
Moreover, the  $\Q_\ell$-linear extensions  of $\I_r$ and $\J_r$ to $\mathscr{H}_\ell \to \Q_\ell$   factor through  $\widetilde{\mathscr{H}}_\ell$.
\end{theorem}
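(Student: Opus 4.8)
The plan is to prove the identity $\I_r(f) = \J_r(f)$ by fixing an effective divisor $D \in \Div(X)$ of degree $d \gg 0$ (specifically $d \geq 2g_1 - 1$), working with the basis element $f = f_D$ of $\mathscr{H}$, and showing that the two distributions agree term-by-term over the orbit decomposition indexed by $\xi \in K$ with $\Tr_{K/F}(\xi) = 1$. On the analytic side, Theorem \ref{prop:geometric orbital} gives $\J(\xi, f_D, s) = \sum_{i=0}^{2d} q^{2(i-d)s}\Tr(\Frob_\xi; (\beta_{i*}\Q_\ell)_{\bar\xi})$ for $\xi \in A_D(\kk)$, and zero otherwise; applying $(\log q)^{-r}\frac{d^r}{ds^r}|_{s=0}$ turns each summand into $(2(i-d))^r \Tr(\Frob_\xi; (\beta_{i*}\Q_\ell)_{\bar\xi})$. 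On the geometric side, Proposition \ref{prop:geometric trace} gives $\I_r(\xi, f_D) = \Tr([\YZ_d]^r_{\bar\xi}\circ\Frob_\xi; (\beta_*\Q_\ell)_{\bar\xi})$ for $\xi \in A_D(\kk)$, and zero otherwise. So the entire theorem reduces to the purely local/representation-theoretic identity
\[
\Tr\big([\YZ_d]^r_{\bar\xi}\circ\Frob_\xi; (\beta_*\Q_\ell)_{\bar\xi}\big) = \sum_{i=0}^{2d} (2(i-d))^r \Tr\big(\Frob_\xi; (\beta_{i*}\Q_\ell)_{\bar\xi}\big),
\]
which I would deduce from the structural results of \S4.1–4.2.

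The key step is to feed in Proposition \ref{beta decomp}: over $A_d^\circ$ (which contains $\bar\xi$ since $\xi$ has distinct-point support when $D$ is chosen generically — and in general one extends by continuity/proper base change using smoothness of $N_{(i,2d-i)}$), we have $\beta_*\Q_\ell = \bigoplus_{k=0}^d \mathrm{Sym}^{2d-2k}\Q_\ell^2 \otimes L_{[2d-k,k]}$, with $[\YZ_d]$ acting as $H_k \otimes 1$ where $H_k$ has characteristic polynomial $\prod_{j=k-d}^{d-k}(t - 2j)$; in particular the eigenvalues of $H_k^r$ are $\{(2j)^r : k-d \leq j \leq d-k\}$. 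Since $[\YZ_d]$ and $\Frob_\xi$ commute with the $S_{2d}$-action (hence act on the multiplicity spaces $\mathrm{Sym}^{2d-2k}\Q_\ell^2$ compatibly) — and $\Frob$ acts only on the $L_{[2d-k,k]}$ factor while $[\YZ_d]$ acts only on the symmetric-power factor — the trace of the composite factors as a product: $\Tr([\YZ_d]^r_{\bar\xi}\circ\Frob_\xi; \beta_*\Q_\ell) = \sum_{k=0}^d \big(\sum_{j=k-d}^{d-k}(2j)^r\big)\cdot \Tr(\Frob_\xi; (L_{[2d-k,k]})_{\bar\xi})$. On the right-hand side, Proposition \ref{beta i decomp} gives $\beta_{i*}\Q_\ell = \bigoplus_{k=0}^{\min(i,2d-i)} L_{[2d-k,k]}$ (using the stated symmetry $\beta_{i*} \simeq \beta_{(2d-i)*}$), so $\sum_i (2(i-d))^r\Tr(\Frob_\xi; \beta_{i*}\Q_\ell) = \sum_{k=0}^d \big(\sum_{i : \min(i,2d-i)\geq k}(2(i-d))^r\big)\Tr(\Frob_\xi; L_{[2d-k,k]})$. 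The two sides now match coefficient-by-coefficient in $\Tr(\Frob_\xi; L_{[2d-k,k]})$ once one checks the elementary combinatorial identity that $\{i : k \leq i \leq 2d-k\}$ biject with $\{j : k-d \leq j \leq d-k\}$ via $i \mapsto i - d$, so $\sum_{i=k}^{2d-k}(2(i-d))^r = \sum_{j=k-d}^{d-k}(2j)^r$. This completes the term-by-term comparison, and summing over $\xi$ gives $\I_r(f_D) = \J_r(f_D)$ for all $D$; by linearity (both $\I_r$ and $\J_r$ are linear functionals, as noted after (\ref{I distribution def}) and in \S\ref{ss:spectral}) this extends to all of $\mathscr{H}$.

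For the ``moreover'' clause: one must show both $\I_r$ and $\J_r$ factor through $\widetilde{\mathscr{H}}_\ell$. For $\J_r$, this is already essentially in Proposition \ref{J pi decomp}: $\J(f,s)$ depends only on the image of $f$ in $\mathscr{H}_\mathrm{aut}$, hence a fortiori only on its image in $\widetilde{\mathscr{H}}_\ell$ (which surjects onto $\mathscr{H}_{\mathrm{aut},\ell}$). For $\I_r$, the intersection number $\langle [\Sht_A^{\mu,\leq d}], f*[\Sht_T^\mu]\rangle$ is computed via the cycle class map into $V = H_c^{2r}(\Sht_G^\mu\otimes\bar\kk,\Q_\ell)(r)$ and the cup product (\ref{cup}), so it depends only on the image of $f$ in $\End_{\Q_\ell}(V)$, hence only on its image in $\widetilde{\mathscr{H}}_\ell$; and now that we know $\I_r = \J_r$, either factorization suffices for both.

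The main obstacle I anticipate is not the final combinatorial bookkeeping but ensuring the reduction step is clean: namely, that the fiberwise identity at $\bar\xi$ can legitimately be read off from the global decomposition of $\beta_*\Q_\ell$ over $A_d^\circ$ together with the identification $[\YZ_d]|_{A_d^\circ} = \pi^*H$ from Proposition \ref{beta decomp}, even when $\xi$ does not lie in the open locus $A_d^\circ$ of distinct points — one handles this by choosing $D$ (equivalently $\xi$) to have multiplicity-free support to establish the identity for a Zariski-dense set of closed points, which suffices since the identity of linear functionals on the finite-dimensional $\mathscr{H}_\ell$ can be tested on the spanning set $\{f_D\}$ with $D$ ranging over such divisors. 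Making the commutation of $\Frob_\xi$ with $[\YZ_d]$ precise — they commute because $[\YZ_d]$ is a morphism of $\kk$-schemes, hence $\Frob$-equivariant — and hence the factorization of the trace of the composite through the tensor decomposition, is the one point requiring a little care but no real difficulty.
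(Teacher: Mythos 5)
Your term-by-term computation for $f=f_D$ with $\deg D = d \ge 2g_1-1$ is essentially the paper's argument: combine Theorem \ref{prop:geometric orbital} and Proposition \ref{prop:geometric trace} to reduce to a fiberwise identity at each $\xi \in A_D(\kk)$, then play off the decomposition of Proposition \ref{beta decomp} (with $[\YZ_d]$ acting as $H_k\otimes 1$, eigenvalues $2j$, $k-d\le j\le d-k$) against the decomposition of Proposition \ref{beta i decomp}, and match coefficients of $\Tr(\Frob_\xi; (L_{[2d-k,k]})_{\bar\xi})$ via the identity $\sum_{i=k}^{2d-k}(2(i-d))^r=\sum_{j=k-d}^{d-k}(2j)^r$. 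Your worry about $\xi$ lying outside $A_d^\circ$ is not an issue and needs no genericity of $D$: since $L_\lambda:=v_*\pi^*\widetilde L_\lambda$ and $\beta_{i*}\Q_\ell\simeq v_*\pi^*b_{i*}\Q_\ell$ by smoothness of $N_{(i,2d-i)}$ and proper base change, the decompositions of Propositions \ref{beta i decomp} and \ref{beta decomp} hold over all of $A_d$, so the fiberwise identity is valid at every $\xi\in A_D(\kk)$.

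The genuine gap is in your final extension step. You claim that $\I_r(f_D)=\J_r(f_D)$ ``for all $D$'' and that linearity then gives the identity on all of $\mathscr{H}$; but your computation only covers $D$ of degree $d\ge 2g_1-1$ (smoothness of $N_{(i,2d-i)}$, hence Propositions \ref{beta i decomp} and \ref{beta decomp} and Proposition \ref{prop:geometric trace}, all require this), and the $f_D$ with $\deg D\ge 2g_1-1$ do \emph{not} span $\mathscr{H}$, whose basis runs over all effective divisors including small degrees. Your fallback assertion that $\mathscr{H}_\ell$ is finite-dimensional and spanned by the $f_D$ with multiplicity-free (or large-degree) support is false: $\mathscr{H}_\ell$ is infinite-dimensional, and dropping the small-degree $f_D$ drops basis elements. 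The correct closing move --- and the reason the ``moreover'' clause is proved first in the paper rather than being an afterthought --- is to use the factorization you did establish: both $\I_r$ and $\J_r$ factor through $\widetilde{\mathscr{H}}_\ell$ (via the cup-product compatibility for $\I_r$ and the last claim of Proposition \ref{J pi decomp} for $\J_r$), and then to invoke the nontrivial input from the proof of \cite[Theorem 9.2]{YZ} that the image of $\mathscr{H}_\ell$ in $\widetilde{\mathscr{H}}_\ell$ is spanned as a $\Q_\ell$-vector space by the images of the $f_D$ with $\deg D\ge 2g_1-1$. With that citation your argument closes; without it, the passage from large-degree $f_D$ to general $f\in\mathscr{H}$ is unjustified.
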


\begin{proof}
The compatibility of the cup product pairing (\ref{cup}) with the intersection pairing on the Chow group implies that 
the $\Q_\ell$-linear extension $\I_r : \mathscr{H}_\ell \to \Q_\ell$ factors through  $\overline{\mathscr{H}_\ell}$. 
The final claim of Proposition \ref{J pi decomp} implies that the $\Q_\ell$-linear extension $\J_r : \mathscr{H}_\ell \to \Q_\ell$ factors through  $\mathscr{H}_{\mathrm{aut}, \ell}$.  It follows that both   $\I_r$ and $\J_r$  factor through the quotient 
$ \widetilde{\mathscr{H}}_\ell$.

It  remains to prove that $\I_r(f) = \J_r(f)$ for all $f\in \mathscr{H}$.
Assume first that $f=f_D$ for some effective divisor $D \in \mathrm{Div}(X)$ of degree $d\ge 2 g_1-1$.  
For each $\xi \in A_D(\kk)$, define
\[\I_r(\xi, f_D) = \Tr\big(  [\YZ_d]^r_{\bar{\xi}} \circ \mathrm{Frob}_\xi  ;  ( \beta_*\Q_\ell)_{\bar\xi}  \big),\]
and
\[\J_r(\xi, f_D) = 2^r\sum_{i = 0}^{2d}(i-d)^r\Tr\big(\mathrm{Frob}_\xi  ;  ( \beta_{(i,2d-i)*}\Q_\ell)_{\bar\xi}  \big) \]
By Proposition \ref{prop:geometric trace} and Theorem \ref{prop:geometric orbital}, 
\[\I_r(f_D) = \sum_{\xi \in A_D(\kk)} \I_r(\xi,f_D).\]
\[\J_r(f_D) = \sum_{\xi \in A_D(\kk)} \J_r(\xi,f_D),\]
so it suffices to show that $\I_r(\xi,f_D) = \J_r(\xi,f_D)$ for all $\xi$. 

There is a decomposition
\[\beta_*\Q_\ell = \bigoplus_{i = 0}^{2d} \beta_{(i,2d-i)*}\Q_\ell,\] but the endomorphism $[\YZ_d]$ of $\beta_*\Q_\ell$ does not preserve this decomposition; see Remark \ref{no preserve}.  We instead consider the decomposition 
\[\beta_*\Q_\ell = \bigoplus_{k = 0}^d \left(\mathrm{Sym}^{2d-2k}(\Q_\ell^2)\otimes L_{[2d-k,k]}\right),\]
of Proposition \ref{beta decomp}, which has the property that $[\YZ_d]$ takes the form $H_k \otimes 1$ on each summand.  We compute
\begin{align*}
\I_r(\xi, f_D) &= \Tr\big(  [\YZ_d]^r_{\bar{\xi}} \circ \mathrm{Frob}_\xi  ;  ( \beta_*\Q_\ell)_{\bar\xi}  \big)\\
&=\sum_{k = 0}^d \Tr\left(  H_k^r \otimes \mathrm{Frob}_\xi  ;  \mathrm{Sym}^{2d-2k}(\Q_\ell^2)\otimes L_{[2d-k,k]_{\bar\xi}}  \right)\\
&=\sum_{k = 0}^d \Tr\left(H_k^r\right) \Tr\left(\mathrm{Frob}_\xi  ;  L_{[2d-k,k]_{\bar\xi}}  \right) \\
&=\sum_{k = 0}^d \sum_{j = k-d}^{d-k} (2j)^r  \Tr\left(\mathrm{Frob}_\xi  ;  L_{[2d-k,k]_{\bar\xi}}  \right)\\
&=2^{r+1}\sum_{k = 0}^d \sum_{j = 0}^{d-k} j^r  \Tr\left(\mathrm{Frob}_\xi  ;  L_{[2d-k,k]_{\bar\xi}}  \right)
\end{align*} 
where the second to last equality follows from the last statement in Proposition \ref{beta decomp}.

On the other hand, using Proposition \ref{beta i decomp} we compute 
\begin{align*}
\J_r(\xi, f_D) &= 2^r\sum_{i = 0}^{2d}(d-i)^r\Tr\big(\mathrm{Frob}_\xi  ;  ( \beta_{(i,2d-i)*}\Q_\ell)_{\bar\xi}  \big) \\
&= 2^{r+1}\sum_{i = 0}^{d}(d-i)^r\Tr\big(\mathrm{Frob}_\xi  ;  ( \beta_{(i,2d-i)*}\Q_\ell)_{\bar\xi}  \big) \\
&=2^{r+1}\sum_{i = 0}^{d}(d-i)^r\sum_{k = 0}^i \Tr\big(\mathrm{Frob}_\xi  ;  \left(L_{[2d-k,k]}\right)_{\bar\xi}  \big)\\
&=2^{r+1}\sum_{k = 0}^{d}\sum_{j = 0}^{d-k}j^r \Tr\big(\mathrm{Frob}_\xi  ;  \left(L_{[2d-k,k]}\right)_{\bar\xi}  \big).
\end{align*}
We conclude that $\I_r(\xi,f_D) = \J_r(\xi,f_D)$, and hence $\I_r(f_D) = \J_r(f_D)$ for all effective $D$ of degree $d \geq 2g_1 - 1$ . 
 
 The proof of \cite[Theorem 9.2]{YZ} shows that the image of $\mathscr{H}_\ell \to \widetilde{\mathscr{H}}_\ell$  is generated as $\Q_\ell$-vector space by the images of $f_D\in \mathscr{H}$ as $D$ ranges over all effective divisors on $X$ of degree $d \ge 2 g_1-1$.  Therefore $\I_r=\J_r$.
\end{proof}

\subsection{Proof of main theorems}

The main theorems of the introduction follow from Theorem \ref{prop:key identity}, by modifying the formal arguments of \cite[\S 5]{HS}.  The additional subtlety in our context is that the intersection pairing appearing in the definition of $\I_r(f)$ depends on the auxiliary integer $d$, which is itself a function of $f$.  For the convenience of the reader, we spell out the argument below.   

According to  \cite[(9.5)]{YZ}, there is a canonical $\Q_\ell$-algebra decomposition
\begin{equation}\label{ell hecke decomp}
\widetilde{\mathscr{H}}_\ell = 
\widetilde{\mathscr{H}}_{ \ell, \mathrm{Eis}}  \oplus
 \big( \bigoplus_\mathfrak{m}   \widetilde{\mathscr{H}}_{ \ell,  \mathfrak{m} }   \big),
\end{equation}
where $\mathfrak{m}$ runs over the finitely many maximal ideals $\mathfrak{m} \subset \widetilde{\mathscr{H}}_\ell$ that do not contain the 
kernel of the projection  
\begin{equation}\label{ell satake}
\widetilde{\mathscr{H}}_\ell  \to \Q_\ell[\Pic_X(\kk)]^{\iota_{\Pic}} .
\end{equation}
For each such $\m$ the localization  $ \widetilde{\mathscr{H}}_{\ell,\mathfrak{m}}$ is a finite (hence Artinian) $\Q_\ell$-algebra.
If we denote by $E_\m$ its residue field, then Hensel's lemma implies that the quotient map $\widetilde{\mathscr{H}}_{\ell,\mathfrak{m}} \to E_\mathfrak{m}$ admits a unique section, which makes $\widetilde{\mathscr{H}}_{\ell, \mathfrak{m}}$ into an Artinian local $E_\mathfrak{m}$-algebra.

The decomposition (\ref{ell hecke decomp}) induces a decomposition  of $\widetilde{\mathscr{H}}_\ell$-modules
\begin{equation}\label{cohomological decomposition}
V = V_\Eis \oplus \big(\bigoplus_\m V_\m\big),
\end{equation}
in which each localization $V_\m$ is a finite-dimensional $E_\m$-vector space.
It follows from \cite[Corollary 7.15]{YZ} that this decomposition is orthogonal with respect to the cup product pairing.   
Moreover, the self adjointness of the action of $\mathscr{H}_\ell$ with respect to the cup product pairing (\ref{cup})
implies that there is a unique symmetric $E_\m$-bilinear pairing 
\[
\langle \cdot , \cdot \rangle_{E_\m} : V_\m \times V_\m \to E_\m
\]
such that $\Tr_{E_\m/\Q_\ell} \langle \cdot , \cdot \rangle_{E_\m}= \langle\cdot,\cdot\rangle$.

Define
$
[\Sht_{T}^\mu]_\m \in V_\m
$ 
to be the projection  of the cycle class $\mathrm{cl}( [\Sht_{T}^\mu]) \in V$. Next, define 
$
[\Sht_{A}^\mu]_\m \in V_\m
$ 
to be the projection of $\mathrm{cl}( [\Sht_{A}^{\mu, \leq d+r}]) \in V$, where $d \in \Z \subset \mathcal{D}$ is any integer such that  
\[[\Sht_{T}^\mu]_\m \in H^{2r}_c(\Sht_G^{\mu,\leq d} \otimes_\kk \bar\kk, \Q_\ell)(r) \subset V.\]  
We may form the intersection pairing
\[
\langle [\Sht_{A}^\mu]_\m, [\Sht_{T}^\mu]_\m \rangle_{E_\m} \in E_\m.
\]

Some maximal ideals $\mathfrak{m} \subset \widetilde{\mathscr{H}}_\ell$ appearing in (\ref{ell hecke decomp}) are attached to cuspidal automorphic forms.  Fix an  unramified cuspidal automorphic representation $\pi \subset \mathcal{A}_\cusp(G)$. As 
in \S \ref{ss:basic automorphic}, such a representation determines a homomorphism 
\[
\mathscr{H}_\mathrm{aut} \to \mathscr{H}_\cusp \map{\lambda_\pi} \C
\] 
whose image is a number field $ E_\pi$.  The induced map
\[
\widetilde{ \mathscr{H} }_\ell \to  \mathscr{H}_{\mathrm{aut},\ell} \map{\lambda_\pi} E_\pi \otimes \Q_\ell \iso \prod_{ \mathfrak{l} \mid \ell} E_{\pi ,\mathfrak{l}},
\]
determines,  for every prime  $\mathfrak{l}\mid \ell$ of $E_\pi$,  a surjection
$\lambda_{\pi,\mathfrak{l}}   :    \widetilde{\mathscr{H}}_\ell \to  E_{\pi,\mathfrak{l}}$ whose kernel is one of those maximal ideals 
\begin{equation}\label{cuspidal points}
\mathfrak{m}=\ker( \lambda_{\pi,\mathfrak{l}} )
\end{equation}
appearing in the decomposition (\ref{cohomological decomposition}).  This is a consequence of the isomorphism (\ref{automorphic decomp}).

Recalling the period integrals $\mathscr{P}_A$ and $\mathscr{P}_T$ of \S \ref{ss:spectral}, for every unramified cuspidal automorphic representation $\pi \subset\mathcal{A}_\cusp(G)$ define 
\[
C(\pi,s) =   \frac{   \mathscr{P}_A (  \phi ,s) \mathscr{P}_T(\overline{\phi} ,\eta)}{   \langle \phi, \phi \rangle_\mathrm{Pet} } .
\]
Here, $\phi$ is any nonzero vector in $\pi^U$.

\begin{proposition}\label{prop:period reciprocity}
The complex number 
\[
C_r(\pi)  =  ( \log q)^{-r} \cdot  \frac{d^r}{ds^r}  C (  \pi ,s) \big|_{s=0}
\]  
satisfies $C_r(\pi)^\sigma= C_r(\pi^\sigma)$ for all $\sigma \in \Aut(\C/\Q)$.  In particular, it lies in  $E_\pi$.
\end{proposition}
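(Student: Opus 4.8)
The plan is to bootstrap from the rationality of the analytic distribution to the Galois behaviour of the individual period integrals, using the spectral decomposition of Proposition~\ref{J pi decomp}. First I would differentiate that identity $r$ times at $s=0$ and divide by $(\log q)^r$: for every $f\in\mathcal{I}^\Eis$ this gives
\[
\J_r(f)\;=\;\sum_{\pi}\lambda_\pi(f)\,C_r(\pi),
\]
a \emph{finite} sum over unramified cuspidal $\pi$, where I write $C(\pi,s)$ for the function $\mathscr{P}_A(\phi,s)\mathscr{P}_T(\overline\phi)/\langle\phi,\phi\rangle$ occurring in Proposition~\ref{J pi decomp} (each $C(\pi,s)$ is a Laurent polynomial in $q^s$ by cuspidality, so the termwise differentiation is harmless). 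The key external input is that $\J_r(f)\in\Q$: indeed $\J(f,s)$ is a Laurent polynomial in $q^s$ with rational coefficients (equivalently, by Theorem~\ref{prop:key identity}, $\J_r(f)=\I_r(f)$ is an intersection number).

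Next I would fix $\sigma\in\Aut(\C/\Q)$ and apply it to the displayed identity term by term. Since $\mathscr{H}$ is $\Q$-valued we have $\lambda_\pi(f)\in E_\pi$ and $\sigma\bigl(\lambda_\pi(f)\bigr)=\lambda_{\pi^\sigma}(f)$, where $\pi\mapsto\pi^\sigma$ is the action of $\Aut(\C/\Q)$ on the finite set of unramified cuspidal representations coming from the description of $\mathscr{H}_\cusp$ as a product of number fields (\S\ref{ss:basic automorphic}). Using $\J_r(f)^\sigma=\J_r(f)$ and reindexing $\pi\rightsquigarrow\pi^{\sigma^{-1}}$, I obtain
\[
\sum_{\pi}\lambda_\pi(f)\,C_r(\pi)\;=\;\sum_{\pi}\lambda_\pi(f)\,C_r(\pi^{\sigma^{-1}})^\sigma
\qquad(f\in\mathcal{I}^\Eis).
\]
The characters $\lambda_\pi$ are linearly independent on $\mathcal{I}^\Eis$, because $\mathcal{I}^\Eis$ surjects onto $\mathscr{H}_\cusp$ under $\mathscr{H}\to\mathscr{H}_\mathrm{aut}\cong\mathscr{H}_\cusp\times\Q[\Pic_X(\kk)]^{\iota_\Pic}$ and the $\lambda_\pi$ form a basis of $\Hom_\C(\mathscr{H}_\cusp\otimes\C,\C)$. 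Comparing coefficients gives $C_r(\pi)=C_r(\pi^{\sigma^{-1}})^\sigma$ for every $\pi$, i.e.\ $C_r(\pi)^\sigma=C_r(\pi^\sigma)$.

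Finally, to extract the integrality statement I would observe that $\{\sigma\in\Aut(\C/\Q):\pi^\sigma=\pi\}=\Aut(\C/E_\pi)$, since $E_\pi$ is generated over $\Q$ by the Hecke eigenvalues $\lambda_\pi(f)$ and $\pi^\sigma=\pi$ exactly when $\sigma$ fixes them all; the identity just proved then shows $C_r(\pi)$ is fixed by $\Aut(\C/E_\pi)$, hence lies in $E_\pi=\C^{\Aut(\C/E_\pi)}$. I do not expect a genuine obstacle here: the argument is formal once Proposition~\ref{J pi decomp} and the rationality of $\J(f,s)$ are in hand, and the only place that needs care is the bookkeeping of the Galois action on the cuspidal spectrum (that $\lambda_{\pi^\sigma}=\sigma\circ\lambda_\pi$ and that it permutes the finitely many $\pi$), which is part of the structure recalled in \S\ref{ss:basic automorphic}. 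The real content sits upstream, in Theorem~\ref{prop:key identity}, which is what makes $\J_r(f)$ Galois-invariant.
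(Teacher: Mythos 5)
Your argument is correct, and every step checks out: the spectral identity of Proposition~\ref{J pi decomp} differentiates termwise because each $\mathscr{P}_A(\phi,s)$ is a Laurent polynomial in $q^s$; $\J_r(f)\in\Q$ follows already from the rationality of the coefficients of $\J(f,s)$ (you do not even need Theorem~\ref{prop:key identity} for this, which keeps the logic clean); the image of $\mathcal{I}^\Eis$ in $\mathscr{H}_\mathrm{aut}\cong\mathscr{H}_\cusp\times\Q[\Pic_X(\kk)]^{\iota_\Pic}$ is indeed all of $\mathscr{H}_\cusp\times 0$, so the characters $\lambda_\pi$ stay linearly independent on $\mathcal{I}^\Eis$; and $\C^{\Aut(\C/E_\pi)}=E_\pi$ in characteristic zero. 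However, this is a genuinely different route from the one the paper takes. The paper simply invokes \cite[Prop.\ 5.7]{HS}, whose proof is the direct ``rational structure'' argument: one chooses the vector $\phi$ inside the $E_\pi$-rational eigenspace of the $\Q$-vector space $\mathscr{A}$, observes that each ingredient of $C(\pi,s)$ --- the coefficients of $\mathscr{P}_A(\phi,s)$, the period $\mathscr{P}_T(\overline\phi)$, and the Petersson norm $\langle\phi,\phi\rangle$ --- is a finite $\Q$-linear combination of values of $\phi$ (the relevant quotients being compact or the integrands compactly supported), and concludes that each transforms by $\sigma$ into the corresponding quantity for $\pi^\sigma$. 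That argument is more elementary: it needs no trace formula, no Eisenstein ideal, and it yields the Galois equivariance of each period integral separately, not just of their ratio. Your argument, by contrast, leverages the machinery already built (Proposition~\ref{J pi decomp} plus rationality of $\J$) and is purely formal once that is in place; its one structural cost is the reliance on the surjectivity of $\mathcal{I}^\Eis\to\mathscr{H}_\cusp$ and on multiplicity one to identify $\pi$ with the character $\lambda_\pi$, both of which are available here. Either proof is acceptable; if you use yours, you should state explicitly that $\J_r(f)\in\Q$ via the Laurent-polynomial rationality rather than via Theorem~\ref{prop:key identity}, so that the proposition can be quoted independently of the geometric side.
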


\begin{proof}
As in \cite[Prop.\ 5.7]{HS}.
\end{proof}

\begin{theorem}\label{main}
Let $\mathfrak{m} \subset \widetilde{\mathscr{H}}_\ell$ be a maximal ideal that does not contain the kernel of (\ref{ell satake}).
\begin{enumerate}
\item
If $\mathfrak{m}$ is of the form (\ref{cuspidal points}) for an unramified cuspidal automorphic representation $\pi$ and a place $\mathfrak{l}\mid \ell$ of $E_\pi$, the equality
\[
\langle [\Sht_{A}^\mu]_\m , \,   [\Sht_{T}^\mu]_\m \rangle_{E_\m} = C_r(\pi)
\]
holds  in $E_\m=E_{\pi ,\mathfrak{l}}$.
\item
If $\mathfrak{m}$ is not of the form (\ref{cuspidal points}) then 
\[
\langle [\Sht_{A}^\mu]_\m , \,   [\Sht_{T}^\mu]_\m \rangle_{E_\m} =0.
\]
\end{enumerate}
\end{theorem}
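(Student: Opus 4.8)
The plan is to derive Theorem~\ref{main} from the identity $\I_r=\J_r$ of Theorem~\ref{prop:key identity} together with the spectral decomposition of Proposition~\ref{J pi decomp}, following the formal pattern of \cite[\S 5]{HS}; the genuinely new point is the dependence of $\I_r(f)$ on the auxiliary integer $d$.

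\textbf{Spectral side, blockwise.} First I would record what $\J_r$ does on each block of the decomposition (\ref{ell hecke decomp}). Differentiating (\ref{spectral}) $r$ times at $s=0$ and dividing by $(\log q)^r$ gives $\J_r(f)=\sum_\pi\lambda_\pi(f)\,C_r(\pi)$ for $f\in\mathcal{I}^\Eis$, the sum over unramified cuspidal $\pi$ and $C_r(\pi)$ as in Proposition~\ref{prop:period reciprocity}. Now the image of $\mathcal{I}^\Eis\otimes\Q_\ell$ in $\widetilde{\mathscr{H}}_\ell$ is $\ker\!\big(\widetilde{\mathscr{H}}_\ell\to\Q_\ell[\Pic_X(\kk)]^{\iota_\Pic}\big)$, which contains every non-Eisenstein block $\widetilde{\mathscr{H}}_{\ell,\m}$ of (\ref{ell hecke decomp}); and since each $\lambda_{\pi,\mathfrak l}$ is a ring homomorphism supported on the single block $\ker(\lambda_{\pi,\mathfrak l})$, the restriction of $\J_r$ to a non-Eisenstein block $\widetilde{\mathscr{H}}_{\ell,\m}$ is the zero functional when $\m$ is not of the form (\ref{cuspidal points}), and is the functional $f\mapsto\Tr_{E_{\pi,\mathfrak l}/\Q_\ell}\big(\lambda_{\pi,\mathfrak l}(f)\,C_r(\pi)\big)$ when $\m=\ker(\lambda_{\pi,\mathfrak l})$.

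\textbf{Geometric side, blockwise.} On the other hand, the cycle class map is $\mathscr{H}$-equivariant and compatible with the cup product (\ref{cup}), so $\I_r(f)=\big\langle\mathrm{cl}([\Sht_A^{\mu,\leq d}]),\,f\cdot\mathrm{cl}([\Sht_T^\mu])\big\rangle_V$ for $d\gg 0$. An element $f$ of a single block $\widetilde{\mathscr{H}}_{\ell,\m}$ acts as $0$ on $V_{\m'}$ for $\m'\neq\m$ and on $V_\Eis$; since the decomposition (\ref{cohomological decomposition}) is orthogonal, restricting $\I_r$ to $\widetilde{\mathscr{H}}_{\ell,\m}$ gives $\I_r(f)=\Tr_{E_\m/\Q_\ell}\big\langle[\Sht_A^\mu]_\m,\,f\cdot[\Sht_T^\mu]_\m\big\rangle_{E_\m}$. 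For this to be meaningful I must first check that $[\Sht_A^\mu]_\m$ — the $\m$-projection of $\mathrm{cl}([\Sht_A^{\mu,\leq d+r}])$ for an integer $d$ large enough that $[\Sht_T^\mu]_\m$ lies in $H^{2r}_c(\Sht_G^{\mu,\leq d}\otimes_\kk\bar\kk,\Q_\ell)(r)$ — is independent of that choice of $d$. I expect this to be the main obstacle. It should go as in the $d$-independence of $\I_r$ discussed after (\ref{I distribution def}): increasing $d$ alters $[\Sht_A^{\mu,\leq d}]$ by a sum of classes $[\Sht_A^{\mu,d+i}]$, each of which pairs to zero against $f\cdot[\Sht_T^\mu]$ for all $f$ supported deep enough inside $\Sht_G^\mu$, hence whose $\m$-component is orthogonal to $[\Sht_T^\mu]_\m$ inside $V_\m$.

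\textbf{Matching.} Finally, restrict the equality $\I_r=\J_r$ to a single non-Eisenstein block $\widetilde{\mathscr{H}}_{\ell,\m}$ and evaluate it on the elements $c$ of the Hensel copy of $E_\m$ inside $\widetilde{\mathscr{H}}_{\ell,\m}$, which act on $V_\m$ as the scalar $c\in E_\m$. If $\m=\ker(\lambda_{\pi,\mathfrak l})$, this yields $\Tr_{E_\m/\Q_\ell}\big(c\,\langle[\Sht_A^\mu]_\m,[\Sht_T^\mu]_\m\rangle_{E_\m}\big)=\Tr_{E_\m/\Q_\ell}\big(c\,C_r(\pi)\big)$ for all $c\in E_\m=E_{\pi,\mathfrak l}$, so nondegeneracy of the trace pairing $E_\m\to\Q_\ell$ forces $\langle[\Sht_A^\mu]_\m,[\Sht_T^\mu]_\m\rangle_{E_\m}=C_r(\pi)$, which is part~(1). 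If $\m$ is not of the form (\ref{cuspidal points}) the right-hand side vanishes, and the same computation gives $\langle[\Sht_A^\mu]_\m,[\Sht_T^\mu]_\m\rangle_{E_\m}=0$, which is part~(2). (Alternatively, part~(2) is immediate from the cuspidality of $\mathrm{cl}([\Sht_T^\mu])$, cf.\ \cite[\S 5]{HS}, which makes $[\Sht_T^\mu]_\m=0$ already.)
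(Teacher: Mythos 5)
Your proposal is correct and follows essentially the same route as the paper, whose proof of Theorem \ref{main} simply invokes the key identity $\I_r=\J_r$ of Theorem \ref{prop:key identity} and then runs the formal blockwise argument of \cite[Theorem 1.6]{YZ}: restrict both distributions to each non-Eisenstein block of (\ref{ell hecke decomp}), evaluate on the Hensel copy of $E_\m$, and use nondegeneracy of the trace form, exactly as you do. Your treatment of the dependence on the auxiliary integer $d$ (pairing the discrepancy classes $[\Sht_A^{\mu,d+i}]$ to zero by support disjointness) is also the same device the paper uses when setting up the definitions in \S\ref{ss:main proofs}.
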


\begin{proof}
Given Proposition \ref{prop:key identity}, the proof follows that of  \cite[Theorem 1.6]{YZ}.
\end{proof}


\subsection{The proof of Theorem \ref{thm:main intro}}
\label{ss:main proofs}


As in the introduction, let $[\Sht_{T}^\mu]$ be the pushforward of the fundamental class under    
\[
\theta_T^\mu \colon \Sht_{T}^\mu \to \Sht_G^\mu,
\] 
and let 
$
\widetilde W_T \subset \Ch_{c,r}(\Sht_G^\mu)
$ 
be the  $\mathscr{H}$-submodule it generates.   

Let $d \in \Z \subset \mathcal{D}$ be any large enough integer so that the finite dimensional subspace 
\[\big( \bigoplus_\mathfrak{m}   \widetilde{\mathscr{H}}_{ \ell,  \mathfrak{m} }   \big) \cdot \mathrm{cl}([\Sht_T^\mu]) \subset H^{2r}(\Sht_G^\mu \otimes_\kk \bar\kk, \Q_\ell(r))\]
is supported on $\Sht_G^{\mu, \leq d-r}$; c.f.\  (\ref{ell hecke decomp}). 
Then define $[\Sht_{A}^\mu]$ to be the pushforward of the fundamental class under    
\[
\theta_A^{\mu} \colon \Sht_{A}^{\mu, \leq d} \to \Sht_G^\mu.
\] 
Let 
$
\widetilde W_A \subset \Ch_{c,r}(\Sht_G^\mu)
$ 
be the  $\mathscr{H}$-submodule generated by $[\Sht_{A}^\mu]$.   

Define quotients  
\begin{align*}
W_T & = \widetilde{W}_T/  \{ c \in \widetilde{W}_T : \langle c, \widetilde{W}_A \rangle =0 \}  \\
W_A & = \widetilde{W}_A/  \{ c \in \widetilde{W}_A : \langle c, \widetilde{W}_T\rangle =0 \},
\end{align*}
so that the intersection pairing  descends to  $\langle \cdot\, , \cdot\rangle :  W_A \times W_T \to \Q$.

\begin{proposition}\label{prop:automorphic chow}
The actions of $\mathscr{H}$ on $W_T$ and $W_A$ factor through the quotient 
\[
\mathscr{H} \to \mathscr{H}_\mathrm{aut} \iso \mathscr{H}_\mathrm{cusp} \times \Q[\Pic_X(\kk)]^{\iota_\Pic}
\]
defined in \S \ref{ss:basic automorphic}.
\end{proposition}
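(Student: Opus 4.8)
The plan is to reduce the statement to the claim that the ideal $\mathfrak{a} := \ker(\mathscr{H} \to \mathscr{H}_\mathrm{aut})$ annihilates $W_T$ and $W_A$; granting this, the asserted factorization through $\mathscr{H}_\cusp \times \Q[\Pic_X(\kk)]^{\iota_\Pic}$ is just the isomorphism (\ref{automorphic decomp}) of \cite[Lemma 7.16]{YZ}. Two preliminaries set this up. First, $\mathfrak{a}$ is an ideal and, since $\mathscr{H}_\mathrm{aut}$ surjects onto $\Q[\Pic_X(\kk)]^{\iota_\Pic}$, it is contained in the Eisenstein ideal $\mathcal{I}^\Eis$ of (\ref{eisenstein def}). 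Second, the spherical Hecke algebra of $\PGL_2$ is commutative and invariant under $g\mapsto g^{-1}$, so the $\mathscr{H}$-action on $\Ch_{c,r}(\Sht_G^\mu)$ is self-adjoint for the intersection pairing (\cite[\S A.1]{YZ}, \cite[\S 5.3]{YZ}); hence the subspaces divided out in the definitions of $W_T$ and $W_A$ are $\mathscr{H}$-stable, the quotients are honest $\mathscr{H}$-modules, and the induced pairing $\langle\cdot,\cdot\rangle\colon W_A\times W_T\to\Q$ is $\mathscr{H}$-equivariant and, by construction, nondegenerate.

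Because $W_A$ is generated over $\mathscr{H}$ by the class of $[\Sht_A^\mu]$, because $W_T$ is generated by that of $[\Sht_T^\mu]$, and because the Hecke action is self-adjoint, it suffices to show that $\langle[\Sht_A^\mu],\,(fg)*[\Sht_T^\mu]\rangle = 0$ for all $f\in\mathfrak{a}$ and $g\in\mathscr{H}$: then nondegeneracy forces $f*[\Sht_A^\mu]=0$ in $W_A$, hence $f*W_A=0$, and symmetrically $f*W_T=0$. Lifting $[\Sht_A^\mu]$ to $[\Sht_A^{\mu,\leq d}]$ and writing $[\Sht_A^{\mu,\leq d}] = [\Sht_A^{\mu,\leq D}] - \sum_{i=1}^{D-d}[\Sht_A^{\mu,d+i}]$ for $D\gg0$, the number in question equals $\I_r(fg)$ minus the finitely many $\langle[\Sht_A^{\mu,d+i}],\,(fg)*[\Sht_T^\mu]\rangle$ with $i\geq1$. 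For the first term, Theorem \ref{prop:key identity} gives $\I_r(fg) = \J_r(fg)$, and the final clause of Proposition \ref{J pi decomp} says that $\J(\cdot,s)$ — hence $\J_r$ — depends only on the image of its argument in $\mathscr{H}_\mathrm{aut}$; since $fg\in\mathfrak{a}$, this term is $0$. For the remaining terms, each class $[\Sht_A^{\mu,d+i}]$ is supported deep in the unstable locus of $\Sht_G^\mu$, so its cohomology class in $V = H^{2r}_c(\Sht_G^\mu\otimes_\kk\bar\kk,\Q_\ell(r))$ is governed by the Eisenstein part of the decomposition (\ref{cohomological decomposition}) and in particular annihilated by $\mathcal{I}^\Eis$, as in \cite[\S 7]{YZ}; since $f\in\mathfrak{a}\subseteq\mathcal{I}^\Eis$, these pairings vanish too. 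Adding up, $\langle[\Sht_A^\mu],\,(fg)*[\Sht_T^\mu]\rangle = 0$.

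The step I expect to require the most care is the handling of the correction terms $\langle[\Sht_A^{\mu,d+i}],\,(fg)*[\Sht_T^\mu]\rangle$, i.e.\ reconciling the \emph{fixed} truncation $d$ used to define $[\Sht_A^\mu]$ with the $fg$-dependent truncation implicit in $\I_r$. This is exactly the subtlety flagged at the start of \S\ref{ss:main proofs}. The resolution is to pass through $\ell$-adic cohomology, to split $V$ into its Eisenstein part and the localizations $V_\m$ of (\ref{cohomological decomposition}), and to use that the higher-degree $A$-shtuka strata contribute to $V$ only through the Eisenstein part — equivalently, that the cuspidal (and more generally the non-Eisenstein) projections of $[\Sht_A^{\mu,\leq d}]$ stabilize for $d\gg0$, which is the fact about Manin-Drinfeld cycles used tacitly throughout the paper. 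On that Eisenstein part $\mathscr{H}$ acts through its Satake quotient, which factors through $\mathscr{H}_\mathrm{aut}$, so $\mathfrak{a}$ acts by zero there. With this in hand the remainder is formal and parallels \cite[\S 5]{HS}.
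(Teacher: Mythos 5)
Your first paragraph and the core of your second are exactly the paper's argument: reduce to showing that $\mathfrak{a}=\ker(\mathscr{H}\to\mathscr{H}_\mathrm{aut})$ annihilates $W_A$ and $W_T$ (the formal step of \cite[Cor.\ 9.4]{YZ}, using self-adjointness, generation by the two classes, and the nondegeneracy built into the definition of the quotients), and then kill the relevant pairings by the chain $\I_r(fg)=\J_r(fg)=0$, which is Theorem \ref{prop:key identity} together with the final clause of Proposition \ref{J pi decomp}. Up to that point your proposal is correct and is the same route as the paper, which at this stage works entirely with the distribution $\I_r$ as defined in (\ref{I distribution def}) (with its own, $f$-dependent, admissible truncation) and does not pass through the cohomological decomposition.

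The gap is in your treatment of the correction terms $\langle[\Sht_A^{\mu,d+i}],(fg)*[\Sht_T^\mu]\rangle$, i.e.\ precisely the part you add beyond the formal argument. You dispose of them by asserting that $\mathrm{cl}([\Sht_A^{\mu,d+i}])$ ``is governed by the Eisenstein part of (\ref{cohomological decomposition}) and in particular annihilated by $\mathcal{I}^\Eis$.'' Two separate things are unproven here, and the second inference is invalid as stated. First, membership of these classes in $V_\Eis$ (equivalently, vanishing of their projections to every non-Eisenstein localization $V_\m$) is not established anywhere: the integer $d$ in \S\ref{ss:main proofs} is calibrated to the support of the non-Eisenstein Hecke translates of $\mathrm{cl}([\Sht_T^\mu])$, and no statement is made about the $A$-side strata; your parenthetical ``stabilization of the non-Eisenstein projections of $[\Sht_A^{\mu,\leq d}]$'' is itself the unproved claim, not something the paper supplies. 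Second, even granting $\mathrm{cl}([\Sht_A^{\mu,d+i}])\in V_\Eis$, the decomposition (\ref{cohomological decomposition}) only says that $V_\Eis$ is a module over the factor $\widetilde{\mathscr{H}}_{\ell,\mathrm{Eis}}$; it does not say that $\mathscr{H}$ acts on $V_\Eis$ through the Satake quotient $\Q_\ell[\Pic_X(\kk)]^{\iota_\Pic}$, so an element of $\mathcal{I}^\Eis$ need not act by zero there (at an Eisenstein maximal ideal it may act nontrivially, e.g.\ nilpotently), and the pairing you want to kill need not vanish. So ``Eisenstein-supported'' does not yield ``annihilated by $\mathcal{I}^\Eis$,'' and the vanishing of the correction terms — the only point where your argument goes beyond the paper's — is not justified. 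To repair it you would either need the genuine input about very unstable strata from \cite[\S 7]{YZ} in a precise form, or, as the paper does, avoid introducing these terms at this stage by evaluating the pairings through $\I_r$ with a truncation admissible for the Hecke function in question.
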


\begin{proof}
By Proposition \ref{J pi decomp} the distribution $\J_r(f)$ only depends on the image of $f$ under $\mathscr{H} \to \mathscr{H}_\mathrm{aut}$.
By Proposition \ref{prop:key identity} the same is true of the distribution $\I_r(f)$ defined by (\ref{I distribution def}), and the claim follows as in \cite[Cor.\ 9.4]{YZ}. 
\end{proof}

It follows from the discussion of \S \ref{ss:basic automorphic} that $\mathscr{H}_{\cusp,\R}= \mathscr{H}_\cusp\otimes_\Q\R$ is isomorphic to a product of copies of $\R$,  indexed by the unramified cuspidal automorphic representations $\pi$.  
For each such $\pi$, let $e_\pi \in \mathscr{H}_{\cusp,\R}$ be the corresponding idempotent.  
Using Proposition \ref{prop:automorphic chow}, these idempotents induce a decomposition, for $* \in \{A,T\}$,
\[
W_* (\R) =   W_{*,\mathrm{cusp}} \oplus W_{*,\mathrm{Eis}}  = \left(\bigoplus _\pi W_{*, \pi}\right) \oplus W_{*,\mathrm{Eis}} 
\] 
with sum over unramified cuspidal  $\pi$, and where $W_{*, \pi} \subset W_*(\R)$ is the $\lambda_\pi$-eigenspace of $\mathscr{H}$.

The following is Theorem \ref{thm:main intro} of the introduction.

\begin{theorem}\label{thm:main chow}
For $* \in \{A,T\}$, let $[\Sht_{*}^\mu]_\pi$ denote the projections of the images of $[\Sht_{*}^\mu]$ to the summand $W_{*,\pi}$. Then 
\[
\langle   [\Sht_{A}^\mu]_\pi, [\Sht_{T}^\mu]_\pi \rangle  = C_r(\pi). 
\]  
\end{theorem}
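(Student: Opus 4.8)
The plan is to deduce Theorem~\ref{thm:main chow} from Theorem~\ref{main} by passing between the $\Q_\ell$-linear ``motivic'' setup (where the cohomological decomposition $V=V_\Eis\oplus\bigoplus_\m V_\m$ lives) and the $\R$-linear Chow-theoretic setup of the previous subsection. First I would fix an unramified cuspidal $\pi$ and a prime $\mathfrak l\mid\ell$ of $E_\pi$, so that $\m=\ker(\lambda_{\pi,\mathfrak l})$ is one of the maximal ideals appearing in~(\ref{cohomological decomposition}), with residue field $E_\m=E_{\pi,\mathfrak l}$. The cycle class map $\mathrm{cl}\colon\Ch_{c,r}(\Sht_G^\mu)\to V$ is $\mathscr H$-equivariant, and by Proposition~\ref{prop:automorphic chow} the $\mathscr H$-action on $W_T$, $W_A$ factors through $\mathscr H_{\mathrm{aut}}\iso\mathscr H_\cusp\times\Q[\Pic_X(\kk)]^{\iota_\Pic}$; so $\mathrm{cl}$ carries the $\pi$-isotypic projections $[\Sht_*^\mu]_\pi\in W_{*,\pi}$ into the part of $V\otimes\Q_\ell$ cut out by the idempotent attached to $\pi$, which over $E_\pi\otimes\Q_\ell$ breaks up as $\prod_{\mathfrak l\mid\ell}V_{\m(\mathfrak l)}$. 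The upshot is that, for the right choice of auxiliary $d$ (large enough that $[\Sht_T^\mu]_\m$ is supported on $\Sht_G^{\mu,\leq d}$, matching the convention in the definition of $[\Sht_A^\mu]_\m$ and in~(\ref{I distribution def})), the intersection number $\langle[\Sht_A^\mu]_\pi,[\Sht_T^\mu]_\pi\rangle\in\R$ is the image, under $E_\pi\hookrightarrow E_\pi\otimes\R$, of $\mathrm{Tr}_{E_\pi/\Q}$ applied to a quantity whose $\mathfrak l$-components are the $\langle[\Sht_A^\mu]_{\m(\mathfrak l)},[\Sht_T^\mu]_{\m(\mathfrak l)}\rangle_{E_{\pi,\mathfrak l}}$.

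Next I would invoke Theorem~\ref{main}(1): each of these $\mathfrak l$-local pairings equals $C_r(\pi)\in E_\pi$, viewed in $E_{\pi,\mathfrak l}$. Since $C_r(\pi)$ is a single element of $E_\pi$ independent of $\mathfrak l$ (Proposition~\ref{prop:period reciprocity}), the collection of local pairings is exactly the image of $C_r(\pi)$ under $E_\pi\hookrightarrow E_\pi\otimes\Q_\ell=\prod_{\mathfrak l\mid\ell}E_{\pi,\mathfrak l}$. Tracking this back through $\mathrm{Tr}_{E_\pi/\Q}$ and the decomposition, the real number $\langle[\Sht_A^\mu]_\pi,[\Sht_T^\mu]_\pi\rangle$ is identified with $C_r(\pi)$ itself — here one uses that on the $\pi$-eigenspace the pairing is, by construction of the $E_\m$-bilinear form via $\mathrm{Tr}_{E_\m/\Q_\ell}$, the $\R$-linear extension of a $\Q$-valued pairing whose base change to $E_\pi$ recovers the $E_{\pi,\mathfrak l}$-valued one. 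This is precisely the bookkeeping carried out in \cite[\S5]{HS} and \cite{YZ}, and the parenthetical remark after Theorem~\ref{thm:main intro} (that the left side is independent of the choice of $\phi$, hence genuinely a function of $\pi$) guarantees consistency of $C_r(\pi)$ with the stated formula $\mathscr P_A^{(r)}(\phi,0)\mathscr P_T(\bar\phi)/((\log q)^r\langle\phi,\phi\rangle_\mathrm{Pet})$.

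Finally, I would check the independence-of-$d$ point explicitly, since this is the one genuine novelty over \cite{HS}: the Manin-Drinfeld cycle $[\Sht_A^{\mu,\leq d}]$ depends on $d$, but the argument following~(\ref{I distribution def}) shows $[\Sht_A^{\mu,\leq d+n}]=[\Sht_A^{\mu,\leq d}]+\sum_{i=1}^n[\Sht_A^{\mu,d+i}]$ and that the extra pieces $[\Sht_A^{\mu,d+i}]$ pair to zero against anything supported in low enough degree; so once $d$ is large relative to the (finite-dimensional) cuspidal part, the projected class $[\Sht_A^\mu]_\pi$ and the intersection number stabilize. One should also note that the Eisenstein contribution drops out: by Proposition~\ref{J pi decomp} and Lemma~\ref{eislemma} the analytic distribution $\J_r$, hence $\I_r$, kills $\mathcal I^\Eis$, so there is no Eisenstein cross term to worry about in the pairing $W_A\times W_T\to\Q$.

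\textbf{Main obstacle.} The conceptual content is already in Theorem~\ref{main}; the real work in this subsection is the linear-algebra reconciliation between the $E_\m$-bilinear cohomological pairing and the $\R$-bilinear Chow pairing, together with keeping the auxiliary integer $d$ (which depends on $f$, and now also enters the very definition of the cycle) from interfering. I expect that bookkeeping — especially verifying that the $\pi$-projection of $\mathrm{cl}([\Sht_A^{\mu,\leq d}])$ in $V$ matches the image of $[\Sht_A^\mu]_\pi\in W_{A,\pi}$ compatibly with the trace pairings at every $\mathfrak l\mid\ell$ — to be the only subtle step, and it is handled exactly as the parallel argument in \cite[Theorem~1.6]{YZ} and \cite[\S5]{HS}, which is why I would present it by reference rather than in full detail.
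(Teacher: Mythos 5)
Your proposal is correct and follows essentially the same route as the paper: the paper's proof of Theorem \ref{thm:main chow} is exactly a deduction from Theorem \ref{main} via the formal bookkeeping of \cite[Thm.\ 5.10]{HS} (and \cite[Thm.\ 1.6]{YZ}), matching the $E_\m$-valued cohomological pairings to the $\R$-linear $\pi$-isotypic Chow pairing through the cycle class map, Proposition \ref{prop:automorphic chow}, and Proposition \ref{prop:period reciprocity}. Your explicit attention to the stabilization in the auxiliary integer $d$ is precisely the "additional subtlety" the paper flags at the start of \S4.4, so nothing is missing.
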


\begin{proof}
This follows from Theorem \ref{main}, as in \cite[Thm.\ 5.10]{HS}.
\end{proof}

Theorems \ref{ratio} and \ref{consequence} follow from Theorem \ref{thm:main intro}, as explained in the Introduction.



\bibliographystyle{amsalpha}

\providecommand{\bysame}{\leavevmode\hbox to3em{\hrulefill}\thinspace}
\providecommand{\MR}{\relax\ifhmode\unskip\space\fi MR }
\providecommand{\MRhref}[2]{%
  \href{http://www.ams.org/mathscinet-getitem?mr=#1}{#2}}

\end{document}